\documentclass[11pt]{article}

\usepackage{amsmath, amssymb, amsthm} 
\usepackage[T1]{fontenc}
\usepackage{dsfont}
\usepackage[colorlinks=true, allcolors=blue,backref]{hyperref}
\usepackage[capitalize,nameinlink]{cleveref}
\usepackage{url}
\usepackage[utf8]{inputenc}

\usepackage{tikz}
\usepackage{comment}

\interfootnotelinepenalty=10000

\allowdisplaybreaks
\expandafter\let\expandafter\oldproof\csname\string\proof\endcsname
\let\oldendproof\endproof
\renewenvironment{proof}[1][\proofname]{%
	\oldproof[\bf #1]%
}{\oldendproof}

\parindent 5mm
\parskip 0.2mm
\oddsidemargin  0pt \evensidemargin 0pt \marginparwidth 0pt
\marginparsep 0pt \topmargin 0pt \headsep 0pt \textheight 8.8in
\textwidth 6.6in

\allowdisplaybreaks

\theoremstyle{plain}

\newtheorem{lemma}{Lemma}[section]
\newtheorem{theorem}[lemma]{Theorem}
\newtheorem{claim}[lemma]{Claim}
\newtheorem{proposition}[lemma]{Proposition}

\newtheorem{definition}[lemma]{Definition}

\newcommand{\Bin}{\ensuremath{\textrm{Bin}}}



\newcommand{\mc}{\mathcal}



\RequirePackage[normalem]{ulem} 
\RequirePackage{color}\definecolor{RED}{rgb}{1,0,0}\definecolor{BLUE}{rgb}{0,0,1} 

\newcommand{\abs}[1]{\left\vert {#1} \right\vert}
\newcommand{\floor}[1]{\left\lfloor {#1} \right\rfloor}
\newcommand{\ceil}[1]{\left\lceil {#1} \right\rceil}

\global\long\def\zj#1{\textcolor{cyan}{\textbf{[ZJ comments:} #1\textbf{]}}}

\title{ \vspace{-0.8cm}Difference-Isomorphic Graph Families
}

\author{
Lior Gishboliner\thanks{Department of Mathematics, ETH, Z\"urich, Switzerland. Research supported in part by SNSF grant 200021\_196965. Email: \textbf{\{lior.gishboliner, zhihan.jin, benjamin.sudakov\}@math.ethz.ch}.}
\and 
Zhihan Jin\footnotemark[1]
\and Benny Sudakov\footnotemark[1]
}

\date{}

\begin{document}

\maketitle

\begin{abstract}
    Many well-studied problems in extremal combinatorics deal with the maximum possible size of a family of objects in which every pair of objects satisfies a given restriction. One problem of this type was recently raised by Alon, Gujgiczer, K\"orner, Milojevi\'c and Simonyi. They asked to determine the maximum size of a family $\mathcal{G}$ of graphs on $[n]$, such that for every two $G_1,G_2 \in \mathcal{G}$, the graphs $G_1 \setminus G_2$ and $G_2 \setminus G_1$ are isomorphic. We completely resolve this problem by showing that this maximum is exactly $2^{\frac{1}{2}\big(\binom{n}{2} - \floor{\frac{n}{2}}\big)}$ and characterizing all the extremal constructions. We also prove an analogous result for $r$-uniform hypergraphs.
\end{abstract}

\section{Introduction}



Some of the most well-studied problems in extremal combinatorics ask for the maximum size of a collection of objects where every pair of objects satisfies some given property. For example, the celebrated Erd\H{o}s--Ko--Rado theorem \cite{EKR} determines the maximum size of a family of subsets of size $k$ of a set of size $n$ in which every two subsets intersect. 
This is one of the fundamental results of extremal set theory and has many extensions and generalizations.
Another example is the famous theorems of Ray-Chaudhuri and Wilson \cite{RC_W} and of Frankl and Wilson \cite{FW}, bounding the size of a family of subsets of $[n]$ where the intersection of any two sets belongs to a given set $L$. In addition to their intrinsic interest, these theorems have surprising applications to Ramsey theory and geometry, see \cite{BF_Book}. We also refer the reader to \cite{FT_Book} for an overview of extremal set theory. 

An interesting variant of the above problems is obtained by equipping the ground set with some combinatorial structure, such as being the edge set of a complete graph/hypergraph (see~\cite{Ellis} for many examples).
This direction of research was initiated by Simonovits and S\'os \cite{SS} and received considerable attention in recent decades. 
For example, Ellis, Filmus and Friedgut \cite{EFF} proved a conjecture of Simonovits and S\'os (and improved an earlier bound in \cite{CGFS}) by showing that if $\mathcal{G}$ is a family of graphs on $[n]$ where the intersection of any two graphs contains a triangle, then $|\mathcal{G}| \leq 2^{\binom{n}{3}-3}$. Note that this bound is tight by taking the family of all graphs containing a given triangle. See also \cite{Berger_et_al,BZ_K4,IRT} for related results. It is worth mentioning that the proofs of this theorem and other results mentioned in the above two paragraphs use various interesting and important techniques, such as probabilistic arguments, linear-algebraic methods, entropy, and discrete Fourier analysis. 

Another noteworthy problem on graph families is the following question of Gowers: Is it true that if $\mathcal{G}$ is a family of graphs on $[n]$ with no two graphs $G_1,G_2 \in \mathcal{G}$ satisfying that $G_1 \subseteq G_2$ and $G_2 \setminus G_1$ is a clique, then $|\mathcal{G}| \leq o\left(2^{\binom{n}{2}}\right)$. As explained by Gowers in his blog post \cite{Gowers}, this problem is related to the first unknown case of the polynomial density Hales-Jewett theorem. 

Recently, Alon, Gujgiczer, K\"orner, Milojevi\'c and Simonyi \cite{AGKMS} initiated the study of so-called {\em graph codes}, which also falls into the general framework of studying graph families with pairwise restrictions. 
Let $\mathcal{F}$ be a family of graphs on $[n]$ which is closed under isomorphism. Using the terminology from \cite{AGKMS}, a set $\mathcal{G}$ of graphs on $[n]$ is called {\em $\mathcal{F}$-good} if $G_1 \oplus G_2 \in \mathcal{F}$ for every $G_1,G_2 \in \mathcal{G}$, where $G_1 \oplus G_2$ is the symmetric difference of the (edge sets of) $G_1,G_2$. 
Note that if $\mathcal{F}$ is the set of all graphs with at least $d$ edges, then an $\mathcal{F}$-good graph family is simply a binary code (in the sense of coding theory) with distance at least $d$. Thus, the study of $\mathcal{F}$-good graph families is a variant of the study of codes where the ground set is given the structure of $E(K_n)$.
In \cite{AGKMS} and in a subsequent work \cite{Alon_Codes}, the maximum size of $\mathcal{F}$-good families was estimated for various choices of $\mathcal{F}$, such as the set of all connected graphs, all Hamiltonian graphs, and all $\mathcal{H}$-free graphs for certain graph-families $\mathcal{H}$. See also \cite{Alon_connCodes,BGMW_Codes} for related results.
As in coding theory, it is also natural to study {\em linear} $\mathcal{F}$-good graph families, where a graph family is called linear if it is closed under symmetric difference. As a notable example, Alon \cite{Alon_Codes} showed that if $\mathcal{G}$ is a linear graph family such that $G_1 \oplus G_2$ is not a clique for any $G_1,G_2 \in \mathcal{G}$, then $|\mathcal{G}| \leq 2^{\binom{n}{2}-\floor{\frac{n}{2}}}$, and this is best possible.

For two (hyper-)graphs $G_1,G_2$ on $[n]$, we use $G_1 \setminus G_2$ to denote the (hyper-)graph on $[n]$ with edge-set $E(G_1) \setminus E(G_2)$. 
Some of the constructions of $\mathcal{F}$-good graph families $\mathcal{G}$ in \cite{AGKMS} have the property that $G_1 \setminus G_2$ is isomorphic to $G_2 \setminus G_1$ for every $G_1,G_2 \in \mathcal{G}$. 
This led Alon et al.~\cite{AGKMS} to ask the question of determining the largest possible size of a graph family with this property. We will call such families {\em difference-isomorphic}:
\begin{definition}
    A family $\mathcal{G}$ of $r$-graphs on $[n]$ is called {\em difference-isomorphic} if for every two graphs $G_1,G_2 \in \mathcal{G}$, it holds that $G_1 \setminus G_2$ is isomorphic to $G_2 \setminus G_1$. 
\end{definition}
Observe that the family of all perfect matchings is difference-isomorphic and has size $n^{(\frac{1}{2}+o(1))n}$. 
Alon et al.~\cite{AGKMS} constructed a slightly larger difference-isomorphic graph family by taking graphs consisting of vertex-disjoint stars. 
More precisely, fix a partition of $[n]$ into two sets $A,B$ of size $|A| = k, |B| = n-k$, and consider the family of all graphs which consist of $k$ disjoint stars each with $\frac{n}{k}-1$ leaves, where each star has its center in $A$ and leaves in $B$. 
This family is difference-isomorphic and has size $n^{(1+o(1))n}$ for the optimal choice of $k$, namely $k \approx \frac{n}{\log n}$.
Given these constructions, one might expect that difference-isomorphic graph families are quite small. Rather surprisingly, it turns out that this is very far from the truth. Let us now describe a much larger difference-isomorphic graph family. 
Fix a permutation $\psi$ that consists only of 2-cycles (assuming that $n$ is even); in particular, $\psi^2$ is the identity, i.e., $\psi$ is an involution.
It is easy to see that for every edge $e \in E(K_n)$, there is an edge $f \in E(K_n)$ such that $\psi$ maps $e$ to $f$ and $f$ to $e$ (possibly $e=f$). Now, take $\mathcal{G}$ to be the set of all graphs which contain exactly one of the edges $e,f$ for each such pair $(e,f)$. Then $\mathcal{G}$ is difference-isomorphic (in fact, $\psi$ is the isomorphism witnessing this for every $G_1,G_2 \in \mathcal{G}$), and has size $2^{\frac{1}{2}\big(\binom{n}{2} - \floor{\frac{n}{2}}\big)}$; see \cref{prop:extremal construction} for the full details. As our main result, we shall show that this construction is in fact the largest possible difference-isomorphic family of graphs on $[n]$. This completely resolves the aforementioned question of Alon et al.~\cite{AGKMS}. 
Moreover, our techniques can also be used to establish an analogous result for $r$-uniform hypergraphs. 
To state it, define
\begin{equation}\label{eq: f_r(n)}
    f_r(n) := 
    \begin{cases}
        \frac{1}{2}\binom{n}{r} & 
        r \text{ is odd and } n \text{ is even,}
        \\
        \frac{1}{2}\big(\binom{n}{r} - \binom{\floor{n/2}}{\floor{r/2}}\big) & \text{otherwise}.
    \end{cases}
\end{equation}
\begin{theorem}\label{thm:tight}
   For every $r \geq 2$, there is $n_0 = n_0(r)$ such that for every $n \geq n_0$, the largest possible size of a difference-isomorphic family of $r$-graphs on $[n]$ equals $2^{f_r(n)}$. 
\end{theorem}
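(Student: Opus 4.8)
The lower bound $2^{f_r(n)}$ is given by the involution construction of \cref{prop:extremal construction}, so the plan is to establish the matching upper bound: any difference-isomorphic family $\mathcal{G}$ of $r$-graphs on $[n]$ with $n$ large has $|\mathcal{G}| \le 2^{f_r(n)}$. The first, easy, observation is that all members of $\mathcal{G}$ have the same number of edges $m$, since $|E(G_1 \setminus G_2)| = |E(G_2 \setminus G_1)|$ for every $G_1, G_2 \in \mathcal{G}$; more is true, namely that $G_1 \setminus G_2$ and $G_2 \setminus G_1$ have the same degree sequence and, in fact, the same count of every fixed subgraph, and these ``moment identities'' are the raw material for what follows. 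It is also convenient to pick one $G_0 \in \mathcal{G}$, put $U := E(G_0)$, and pass to $\mathcal{H} := \{G_0 \oplus G : G \in \mathcal{G}\}$, which contains the empty graph and satisfies $|\mathcal{H}| = |\mathcal{G}|$; a short computation shows that the difference-isomorphic property of $\mathcal{G}$ becomes a statement about $\mathcal{H}$ relative to $U$, and in particular that every $H \in \mathcal{H}$ satisfies $H \cap U \cong H \setminus U$.

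The main idea is to show that a large difference-isomorphic family is forced, after relabeling the vertices, to lie inside a single coset of the subspace $V_\psi := \langle e + \psi(e) : e \in \binom{[n]}{r},\ \psi(e) \neq e\rangle \subseteq \mathbb{F}_2^{\binom{[n]}{r}}$ attached to an involution $\psi$ of $[n]$ (with exactly one fixed point when $n$ is odd) --- precisely the shape of the extremal construction. To do this I would study the ``column pattern'' of $\mathcal{G}$, i.e.\ for each $r$-set $e$ the record of which members of $\mathcal{G}$ contain $e$, and try to prove that once $|\mathcal{G}|$ is large the $r$-sets split into \emph{constant} ones (contained in every graph of $\mathcal{G}$, or in none) and \emph{paired} ones, where the two members of a pair are never simultaneously present or absent in a graph of $\mathcal{G}$; and, crucially, that this pairing is geometric, i.e.\ $\{e, e'\}$ is a pair exactly when $e' = \psi(e)$ for one fixed involution $\psi$. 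Granting this, $\mathcal{G}$ lies in a coset of $V_\psi$ and the proof is complete; for $r = 2$ this also identifies all extremal families. Proving the structural statement appears to need two steps: a preliminary structural bound showing that a family with $|\mathcal{G}| \ge 2^{f_r(n)}$ is already tightly constrained --- this cannot come from cardinalities alone, since the middle-layer bound $\binom{\binom{n}{r}}{m}$ is far too weak, so it must genuinely use the isomorphisms, plausibly through an entropy/Shearer-type argument exploiting that a typical pair $G_1, G_2$ has difference graphs with equal degree sequences, or through a stability/removal-type argument --- and then an assembly step which promotes approximate, ``local'' information about individual pairs of $r$-sets to one global involution $\psi$ and then upgrades ``most of $\mathcal{G}$ respects $\psi$'' to ``all of $\mathcal{G}$ lies in a single coset of $V_\psi$''.

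Once the involution $\psi$ is available, the count is routine. A coset of $V_\psi$ has size $2^{\dim V_\psi}$, and $\dim V_\psi$ equals the number of size-$2$ orbits of $\psi$ acting on $\binom{[n]}{r}$, which is $\frac{1}{2}\big(\binom{n}{r} - t\big)$, where $t$ is the number of $\psi$-invariant $r$-sets. A $\psi$-invariant $r$-set is the union of some $2$-cycles of $\psi$ together with, possibly, the fixed point (when $n$ is odd), so $t = \binom{\floor{n/2}}{\floor{r/2}}$ unless $r$ is odd and $n$ is even, in which case $t = 0$; plugging these into $\frac{1}{2}(\binom{n}{r} - t)$ reproduces exactly the two cases of $f_r(n)$ in \eqref{eq: f_r(n)}. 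Hence $|\mathcal{G}| \le 2^{f_r(n)}$, with equality precisely when $\mathcal{G}$ is a full coset of some $V_\psi$.

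The main obstacle is the structural dichotomy above --- that the column pattern of a large difference-isomorphic family must be ``constant, or paired by an involution'', and not some messier configuration. I expect this to carry the bulk of the work and to be where the hypothesis $n \ge n_0(r)$ enters: both the preliminary structural bound and the local-to-global assembly (with the parities of $n$ and $r$ featuring in the case analysis) look delicate. The passage from graphs to $r$-uniform hypergraphs should, as the authors say, amount to pushing the same scheme through with extra bookkeeping, the one genuinely new ingredient being the count of $\psi$-invariant $r$-sets recorded above.
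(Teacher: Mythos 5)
Your lower bound and your closing count are fine: the construction is exactly \cref{prop:extremal construction}, and your computation of the number of $\psi$-invariant $r$-sets (hence of $\dim V_\psi$) matches \cref{claim: fixed r-sets}. You have also correctly identified the target structure, namely that extremal families should be governed by a single involution. But the actual upper bound is not proved in your proposal: the entire content of the theorem is the ``structural dichotomy'' that you defer --- that a difference-isomorphic family of size at least (or close to) $2^{f_r(n)}$ must have its $r$-sets split into constant ones and ones paired by one global involution $\psi$. You explicitly label this the main obstacle, offer only candidate mechanisms (``plausibly through an entropy/Shearer-type argument \dots or through a stability/removal-type argument''), and give no argument for either the preliminary bound or the local-to-global assembly. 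This is precisely where the paper spends essentially all of its effort (\cref{sec:proof}): it bounds, for each pair of permutations $\varphi,\psi$, the number $e_\psi(N_\varphi(G))$ of $\psi$-related pairs inside a $\varphi$-neighbourhood via choosable and good choosable pairs (\cref{lemma: properties of choosable pairs}, \cref{claim: count in terms of good pairs}), converts ``many good choosable pairs'' into ``$\psi$ is nearly an involution close to $\varphi$'' using Erd\H{o}s--Ko--Rado (\cref{lemma: 2-cycles in psi r-graph}, \cref{lemma: few half-two-cycles r-graph}, \cref{lemma: general counts for exceptional r-graph}), counts permutations near a given one (\cref{lem: number of close permutations}), deduces that a large family contains a large involution clique (\cref{theorem without large involution clique}), and then --- with a substantially harder argument when $r=2$, including \cref{lemma: counts for exceptions}, \cref{lem: leftover of psi clique is small}, \cref{lemma: any non-small involution clique is large} and \cref{lemma: no very large involution clique} --- upgrades this to the stability statement \cref{theorem: stablility}, which together with Item 2 of \cref{prop:extremal construction} yields \cref{thm:tight}. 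None of this machinery, nor any substitute for it, appears in your write-up, so the proposal is a plan rather than a proof.

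Two smaller points. First, your intended conclusion ``$\mathcal{G}$ lies in a coset of $V_\psi$, with equality precisely for full cosets'' is slightly off: a full coset of $V_\psi$ need not be difference-isomorphic (if some member contains both edges of a $2$-cycle of $\tilde\psi$, flipping that pair changes $|G\cap\{e,\psi(e)\}|$ and \cref{prop: involution relation} fails); the correct extremal objects are the maximum $\psi$-cliques, i.e.\ cosets through a point containing exactly one edge from each $2$-cycle, with $\psi$ having at most one fixed point. Second, the upper bound does not actually require forcing all of $\mathcal{G}$ into such a coset; the paper's route is the (weaker, and easier to aim for) dichotomy ``either $\mathcal{G}$ is an involution clique, hence $|\mathcal{G}|\le 2^{f_r(n)}$, or $|\mathcal{G}|$ is exponentially smaller,'' and your plan would do well to target that formulation instead.
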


As we shall see, the extremal example for \cref{thm:tight} satisfies that all isomorphisms $G_1\setminus G_2 \rightarrow G_2 \setminus G_1$ (for $G_1,G_2 \in \mathcal{G}$) are given by the same permutation, which is an involution. 
We can further strengthen \cref{thm:tight} by proving a stability result (see \cref{theorem: stablility}), saying that if $\mathcal{G}$ does not have this structure, then it has smaller size.

\paragraph{Paper organization:} 
\cref{sec:basics} contains the key notions we will use in the proofs, as well as the construction giving the lower bound in \cref{thm:tight} and a simple proof of an asymptotic version of the upper bound in \cref{thm:tight}. The full proof of \cref{thm:tight} is then given in \cref{sec:proof}. \cref{sec:concluding} contains some concluding remarks. 


\section{Definitions, the Construction, and an Approximate Bound}\label{sec:basics}
We begin by introduce the key definitions we will use. 
A permutation $\varphi \in S_n$ on the vertices $[n]$ induces a permutation $\tilde{\varphi} \in S_{\binom{[n]}{r}}$ on the edges in $\binom{[n]}{r}$, namely, $\tilde{\varphi}(e) = \{\varphi(x) : x \in e\}$ for $e \in \binom{[n]}{r}$. 
We will omit the tilde and simply write $\varphi(e)$ when it is clear from the context that we are considering the action of $\varphi$ on the edges. 
Also, note that the uniformity $r$ is not part of the notation, as it will be clear from the context.
For a graph $G$, we write $\varphi(G) = \{\tilde{\varphi}(e) : e \in E(G)\}$ to denote the image of $G$ under $\tilde{\varphi}$. 

For $r$-graphs $G,H$ and a permutation $\varphi \in S_n$, we write $G \overset{\varphi}{\rightarrow} H$ to mean that $\varphi(G \setminus H) = H \setminus G$.
Let $N_{\varphi}(G)$ denote the set of all $r$-graphs $H$ such that $G \overset{\varphi}{\rightarrow} H$.
Note that $G \in N_{\varphi}(G)$, and that $G \overset{\varphi}{\rightarrow} H$ if and only if $H \overset{\varphi^{-1}}{\rightarrow} G$. 
In the following definition and lemma, we describe the relation $G \overset{\varphi}{\rightarrow} H$ by giving an equivalent condition. 

\begin{definition}
[Choosable pairs] \label{def: choosable pairs}
    Let $G$ be an $r$-graph and $\varphi$ be a permutation.
    A {\em choosable pair} for $(G,\varphi)$ is an ordered pair $(e,f)\in \binom{[n]}{r}\times\binom{[n]}{r}$ such that $\varphi(e)=f$, $e\in G$ and $f \notin G$.
\end{definition}
We write $\{e,f\}$ for the unordered pair of the choosable pair.
Observe that every $e \in \binom{[n]}{r}$ can be contained in at most one choosable pair. Also, if $(e,f)$ is a choosable pair then $e \neq f$, so an edge $e$ satisfying $\varphi(e) = e$ cannot be part of any choosable pair. 

\begin{lemma}\label{lemma: properties of choosable pairs}
    Let $\varphi \in S_n$ and let $G,H$ be $r$-graphs. Then $G \overset{\varphi}{\rightarrow} H$ if and only if the following two conditions hold:
    \begin{itemize}
        \item[(i)] For every choosable pair $(e,f)$ for $(G,\varphi)$, the graph $H$ contains exactly one of $e$ and $f$.
        \item[(ii)] If $e\in\binom{[n]}{r}$ is not contained in any choosable pair for $(G,\varphi)$, then $G$ and $H$ agree on $e$.
    \end{itemize}
    In particular, $|N_{\varphi}(G)| \leq 2^m$, where $m$ is the number of choosable pairs of $(G,\varphi)$. 
\end{lemma}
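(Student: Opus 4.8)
The plan is to verify the equivalence directly from the definition $G \overset{\varphi}{\rightarrow} H$, which means $\varphi(G \setminus H) = H \setminus G$, by analysing what this forces edge-by-edge. First I would fix an edge $e \in \binom{[n]}{r}$ and ask how $G$ and $H$ can differ on $e$ and on its image $\varphi(e) =: f$. Note that $e \in G \setminus H$ iff $e \in G$ and $e \notin H$, and then $\varphi(e) = f$ must lie in $H \setminus G$, i.e.\ $f \in H$ and $f \notin G$ — which is precisely the condition that $(e,f)$ is a choosable pair for $(G,\varphi)$ together with the statement that $H$ contains $f$ but not $e$. Conversely, if $(e,f)$ is a choosable pair (so $e \in G$, $f \notin G$, $\varphi(e)=f$), then $\varphi$ maps $e \mapsto f$, and since $\varphi$ is a bijection on $\binom{[n]}{r}$, the condition $\varphi(G\setminus H) = H \setminus G$ restricted to this pair says exactly: $e \in G\setminus H \iff f \in H\setminus G$, i.e.\ either both hold ($H$ contains $f$, not $e$) or neither ($H$ contains $e$, not $f$, using $e \in G$, $f\notin G$); in both cases $H$ contains exactly one of $e,f$, giving (i).

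For the forward direction, assume $G \overset{\varphi}{\rightarrow} H$. For (i), take a choosable pair $(e,f)$ and argue as above that $e \in G \setminus H$ forces $f \in H\setminus G$ and, applying the hypothesis ``backwards'' via $\varphi^{-1}$ (using the observation from the excerpt that $G\overset{\varphi}{\rightarrow}H \iff H\overset{\varphi^{-1}}{\rightarrow}G$), that $f \in H\setminus G$ forces $e \in G\setminus H$; hence exactly one of the two sides is empty, meaning $H$ contains exactly one of $e,f$. For (ii), suppose $e$ is in no choosable pair for $(G,\varphi)$ but $G$ and $H$ disagree on $e$. If $e \in G, e\notin H$ then $e \in G\setminus H$, so $\varphi(e) \in H\setminus G$; but then writing $f := \varphi(e)$ we have $e \in G$ and $f\notin G$, so $(e,f)$ is a choosable pair containing $e$, a contradiction. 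If instead $e \notin G, e \in H$, then $e \in H \setminus G = \varphi(G\setminus H)$, so $e = \varphi(e')$ for some $e' \in G\setminus H$; then $(e', e)$ is a choosable pair, and since each edge lies in at most one choosable pair and $e$ is in this one, we contradict the assumption on $e$ (here one should be slightly careful: the assumption is on $e$, and $e$ being the \emph{second} coordinate of a choosable pair still means ``$e$ is contained in a choosable pair'', so this is the contradiction we want). For the reverse direction, assume (i) and (ii); I would check $\varphi(G\setminus H) \subseteq H\setminus G$ and $H\setminus G \subseteq \varphi(G\setminus H)$ separately, in each case taking an edge of the relevant set, observing it must sit in a choosable pair (an edge $e\in G\setminus H$ has $e\in G$, and if $\varphi(e)\in G$ then by (ii)-type reasoning... — more directly: if $e \in G \setminus H$ then by (ii) $e$ must lie in some choosable pair, and that pair must be $(e,\varphi(e))$ since $e \in G$; then (i) and $e\notin H$ force $\varphi(e)\in H$, and $\varphi(e)\notin G$ by definition of choosable pair, so $\varphi(e)\in H\setminus G$), and conclude the two sets map onto each other.

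The final ``in particular'' statement follows immediately: conditions (i) and (ii) show that $H \in N_\varphi(G)$ is completely determined by choosing, for each of the $m$ choosable pairs $\{e,f\}$, which one of $e,f$ belongs to $H$ — the rest of $H$ agrees with $G$ by (ii) — so $|N_\varphi(G)| \le 2^m$.

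The main obstacle is purely bookkeeping: being careful about the asymmetry in the definition of a choosable pair (ordered, with $e \in G$ and $f \notin G$) versus the symmetric statement in (i) about $H$, and making sure the ``$e$ is in no choosable pair'' hypothesis in (ii) is used correctly when $e$ could a priori be either coordinate of a pair. There is no real difficulty beyond tracking membership in $G$, $H$, and under $\varphi$; the observation $G\overset{\varphi}{\rightarrow}H \iff H\overset{\varphi^{-1}}{\rightarrow}G$ and the fact that each edge lies in at most one choosable pair, both already noted in the excerpt, are the only inputs needed.
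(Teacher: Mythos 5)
Your proposal is correct and follows essentially the same argument as the paper: a direct edge-by-edge verification of both directions from the definition $\varphi(G\setminus H)=H\setminus G$, using that each edge lies in at most one choosable pair and handling the case where $e$ is the second coordinate of a pair via $\varphi^{-1}$, with the counting consequence read off from (i) and (ii). No gaps.
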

\begin{proof}
    We first assume that $G \overset{\varphi}{\rightarrow} H$ and prove (i)-(ii). For (i), let $(e,f)$ be a choosable pair for $(G,\varphi)$. 
    So $\varphi(e) = f$ and $e \in G$, $f \notin G$. 
    Suppose by contradiction that $H$ contains neither or both of $e,f$. If $e,f \notin H$, then $e \in G \setminus H$, and therefore $f = \varphi(e) \in H \setminus G$ (as $G \overset{\varphi}{\rightarrow} H$), in contradiction to $f \notin H$. Similarly, if $e,f \in H$ then $f \in H \setminus G$, so $e = \varphi^{-1}(f) \in G \setminus H$, in contradiction to $e \in H$.
    For (ii), let $e$ be an edge which does not belong to any choosable pair of $(G,\varphi)$. Suppose by contradiction that $e \in G \setminus H$ or $e \in H \setminus G$. If $e \in G \setminus H$, then $f := \varphi(e) \in H \setminus G$, meaning that $(e,f)$ is a choosable pair. And if $e \in H \setminus G$, then $f := \varphi^{-1}(e) \in G \setminus H$, so $(f,e)$ is a choosable pair. In both cases we got a contradiction. 

    In the other direction, suppose that (i)(ii) hold and let us show that $\varphi(G \setminus H) = H \setminus G$. Let $e \in G \setminus H$. By (ii), $e$ must belong to some choosable pair. This pair cannot have the form $(f,e)$ (for some edge $f$), because this would imply that $e \notin G$. So this pair must have the form $(e,f)$. Then $e \in G$, $f \notin G$, and $f = \varphi(e) \in H \setminus G$, because $H$ contains exactly one of the edges $e,f$ (by (i)) and $e \notin H$. This shows that $\varphi(G \setminus H) \subseteq H \setminus G$. A similar argument shows that $H \setminus G \subseteq \varphi(G \setminus H)$, and thus $\varphi(G \setminus H) = H \setminus G$.
\end{proof}

\subsection{Involutions, the extremal construction, and stability}

An {\em involution} is a permutation $\psi \in S_n$ such that $\psi^2$ is the identity. 
In other words, an involution is a permutation whose every cycle has length 1 or 2. 
As we shall see, involutions play an important role in the proof of \Cref{theorem: stablility}. 
We use the following notation:
\begin{definition}[$\mathcal{C}_1,\mathcal{C}_2$]\label{def: C1 C2}
For an involution $\psi\in S_n$, denote by $\mathcal{C}_1(\psi)$ the set of fixed points of $\tilde{\psi}$, i.e., the set of edges $e \in \binom{[n]}{r}$ satisfying $\psi(e) = e$.
Denote by $\mathcal{C}_2(\psi)$ the set of all $2$-cycles of $\tilde{\psi}$, namely, the set of all (unordered) pairs of distinct edges $\{e,f\}$, $e,f \in \binom{[n]}{r}$, with $\psi(e) = f$ and $\psi(f) = e$.
\end{definition}
Crucially, if $\psi \in S_n$ is an involution then so is $\tilde{\psi} \in S_{\binom{[n]}{r}}$. 
Hence, every edge $e \in \binom{[n]}{r}$ belongs to $\mathcal{C}_1(\psi)$ or to some pair in $\mathcal{C}_2(\psi)$.
The following lemma gives a relation between involutions and the function \nolinebreak $f_r(n)$.


\begin{lemma}\label{claim: fixed r-sets}
For $2 \leq r \leq n$, $f_r(n)$ is the maximum of $|\mathcal{C}_2(\psi)|$ over all involutions $\psi \in S_n$. The maximum is attained by involutions having at most one fixed point.  
\end{lemma}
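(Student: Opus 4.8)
The plan is to recast the lemma as a statement about fixed points of $\tilde\psi$. Since $\tilde\psi\in S_{\binom{[n]}{r}}$ is an involution (as noted above), every edge is either a fixed point of $\tilde\psi$ or lies in a unique $2$-cycle, so $|\mathcal{C}_1(\psi)|+2|\mathcal{C}_2(\psi)|=\binom{n}{r}$. Hence maximizing $|\mathcal{C}_2(\psi)|$ over involutions $\psi$ is the same as minimizing $|\mathcal{C}_1(\psi)|$, and it suffices to show that $\min_{\psi}|\mathcal{C}_1(\psi)|=\binom{n}{r}-2f_r(n)$, with the minimum attained by some involution having at most one fixed point.

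First I would note that $|\mathcal{C}_1(\psi)|$ depends only on the number $t$ of transpositions of $\psi$: relabeling $[n]$ conjugates $\tilde\psi$ inside $S_{\binom{[n]}{r}}$ and hence preserves its cycle type, and any two involutions with $t$ transpositions are conjugate in $S_n$. Write $g(t)$ for this common value. To compute $g(t)$, fix $\psi$ acting as a transposition on each of $t$ disjoint pairs and fixing the remaining $n-2t$ points. An $r$-set $e$ satisfies $\psi(e)=e$ exactly when it contains both or neither endpoint of each of the $t$ pairs, whereas the $n-2t$ fixed points of $\psi$ may be included or not freely. Recording the size of $e$ in the exponent of $x$, this shows that $g(t)$ is the coefficient of $x^r$ in $(1+x^2)^{t}(1+x)^{n-2t}$; below I write $[x^k]P$ for the coefficient of $x^k$ in a polynomial $P$.

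The crux is the monotonicity $g(t+1)\le g(t)$ for $0\le t\le \lfloor n/2\rfloor-1$. Using the identity $(1+x^2)-(1+x)^2=-2x$ we get
\[
(1+x^2)^{t+1}(1+x)^{n-2t-2}-(1+x^2)^{t}(1+x)^{n-2t}=-2x\,(1+x^2)^{t}(1+x)^{n-2t-2},
\]
and extracting the coefficient of $x^r$ gives $g(t+1)-g(t)=-2\,[x^{r-1}]\big((1+x^2)^{t}(1+x)^{n-2t-2}\big)\le 0$, since in this range $n-2t-2\ge 0$, so the polynomial on the right has nonnegative coefficients. (Equivalently, $g(t)-g(t+1)=2\sum_{j}\binom{t}{j}\binom{n-2t-2}{r-1-2j}\ge 0$, which one can check directly from Pascal's identity.) Therefore $g$ is non-increasing on $\{0,1,\dots,\lfloor n/2\rfloor\}$, so its minimum is $g(\lfloor n/2\rfloor)$, attained by the involution with $\lfloor n/2\rfloor$ transpositions, which has $n-2\lfloor n/2\rfloor\le 1$ fixed points — exactly the claimed structure.

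Finally I would evaluate $g(\lfloor n/2\rfloor)$. For $n$ even it equals $[x^r](1+x^2)^{n/2}$, which is $\binom{n/2}{r/2}$ when $r$ is even and $0$ when $r$ is odd; for $n$ odd it equals $[x^r]\big((1+x^2)^{(n-1)/2}(1+x)\big)=\binom{(n-1)/2}{\lfloor r/2\rfloor}$ for either parity of $r$. In every case this equals $0$ when $r$ is odd and $n$ even, and $\binom{\lfloor n/2\rfloor}{\lfloor r/2\rfloor}$ otherwise, i.e.\ it equals $\binom{n}{r}-2f_r(n)$ by the definition of $f_r$. Combined with $|\mathcal{C}_1(\psi)|+2|\mathcal{C}_2(\psi)|=\binom{n}{r}$, this yields $\max_\psi|\mathcal{C}_2(\psi)|=f_r(n)$, attained by an involution with at most one fixed point. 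I expect the only fiddly part to be tracking the floor functions across the parity cases in this last evaluation; the monotonicity step, which is the conceptual heart, is a one-line generating-function manipulation.
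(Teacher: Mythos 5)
Your proof is correct. It follows the same basic reduction as the paper — use $|\mathcal{C}_1(\psi)|+2|\mathcal{C}_2(\psi)|=\binom{n}{r}$ to turn the problem into minimizing the number of $\tilde\psi$-fixed $r$-sets, show that more transpositions is better, and then evaluate at an involution with at most one fixed point — but the key monotonicity step is handled differently. The paper compares $\psi$ (with $a\ge 2$ fixed points) to the involution $\psi'$ obtained by merging two fixed points into a transposition and observes directly that $\mathcal{C}_1(\psi')\subseteq\mathcal{C}_1(\psi)$, which gives monotonicity in one line with no computation; you instead encode the count of fixed $r$-sets as $g(t)=[x^r](1+x^2)^t(1+x)^{n-2t}$ and use the identity $(1+x^2)-(1+x)^2=-2x$ to get the exact difference $g(t)-g(t+1)=2\,[x^{r-1}]\bigl((1+x^2)^t(1+x)^{n-2t-2}\bigr)\ge 0$. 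The set-containment argument is more elementary and avoids the coefficient bookkeeping, while your generating-function version yields a closed form for the decrement (and hence strictness information when $r-1$ is attainable), which the paper does not need. Your final evaluation of $g(\floor{n/2})$ matches the paper's case analysis of \eqref{eq:fixed points of tilde-psi} and correctly recovers $\binom{n}{r}-2f_r(n)$ in all parity cases, and since $g$ depends only on the number of transpositions, every involution with at most one fixed point attains the maximum, as claimed.
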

\begin{proof}
    Let $\psi$ be an involution. We have $|\mathcal{C}_1(\psi)| + 2|\mathcal{C}_2(\psi)| = \binom{n}{r}$. Suppose that $\psi$ has $a$ fixed points $x_1,\dots,x_a$ and $b$ two-cycles $y_1z_1,\dots,y_bz_b$ so that $a+2b = n$. Observe that $e \in \binom{[n]}{r}$ satisfies $\psi(e) = e$ if and only if for every $1 \leq i \leq b$ it holds that $y_i \in e$ if and only if $z_i \in e$. The number of such $e$ is 
    \begin{equation}\label{eq:fixed points of tilde-psi}
    |\mathcal{C}_1(\psi)| = \sum_{\substack{0 \leq i \leq a \\ i \equiv r \; \text{mod } 2}} 
    \binom{a}{i}\binom{b}{(r-i)/2}
    \end{equation}
    If $a \leq 1$ (i.e., $a=0$ if $n$ is even and $a=1$ if $n$ is odd), then \eqref{eq:fixed points of tilde-psi} equals 
    $\binom{n}{r} - 2f_r(n)$. 
    Indeed, if $r$ is odd and $n$ is even then the sum in \eqref{eq:fixed points of tilde-psi} is empty, because $a=0$. If $r$ is odd and $n$ is odd then the sum equals $\binom{b}{(r-1)/2} = \binom{\floor{n/2}}{\floor{r/2}}$. The cases where $r$ is even can be checked similarly. So we see that when $a \leq 1$, we have $|\mathcal{C}_2(\psi)| = \frac{1}{2}\binom{n}{r} - \frac{1}{2}|\mathcal{C}_1(\psi)| = f_r(n)$, as claimed. 

    Now suppose that $a \geq 2$, and let $\psi'$ be obtained from $\psi$ by making two fixed points of $\psi$ into a 2-cycle. Then $\mathcal{C}_1(\psi') \subseteq \mathcal{C}_1(\psi)$, so $|\mathcal{C}_1(\psi')| \leq |\mathcal{C}_1(\psi)|$. Therefore, $|\mathcal{C}_1(\psi)|$ is minimized when $\psi$ has at most one fixed point. Hence, such $\psi$ maximize $|\mathcal{C}_2(\psi)|$. 
\end{proof}

If $\psi$ is an involution, then $G \overset{\psi}{\rightarrow} H$ if and only if $H \overset{\psi}{\rightarrow} G$ (because $\psi^{-1} = \psi$), so this relation is symmetric. The following lemma gives a simple description of this relation.
\begin{lemma}\label{prop: involution relation}
Let $\psi \in S_n$ be an involution 
and let $G_1,G_2$ be $r$-graphs. Then $G_1 \overset{\psi}{\rightarrow} G_2$ holds if and only if the following two conditions are satisfied:
\begin{itemize}
    \item $G_1,G_2$ agree on all edges in $\mathcal{C}_1(\psi)$.
    \item For every $(e,f) \in \mathcal{C}_2(\psi)$, $G_1,G_2$ contain the same number edges from $\{e,f\}$. 
\end{itemize}
In particular, the relation $G \overset{\psi}{\rightarrow} G'$ is an equivalence relation.
\end{lemma}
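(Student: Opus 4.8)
The plan is to deduce this lemma directly from \cref{lemma: properties of choosable pairs} by translating the notion of a choosable pair into the language of $\mathcal{C}_1(\psi)$ and $\mathcal{C}_2(\psi)$. The first observation is that, because $\psi$ is an involution, an ordered pair $(e,f)$ is a choosable pair for $(G_1,\psi)$ precisely when $\{e,f\}\in\mathcal{C}_2(\psi)$ and $G_1$ contains exactly one of $e,f$ (with $e\in G_1$, $f\notin G_1$). Hence the edges lying in \emph{no} choosable pair for $(G_1,\psi)$ are exactly: the edges of $\mathcal{C}_1(\psi)$, together with those $e$ for which $\{e,\psi(e)\}\in\mathcal{C}_2(\psi)$ and $G_1$ contains both or neither of $e,\psi(e)$. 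With this dictionary, conditions (i)--(ii) of \cref{lemma: properties of choosable pairs} become statements about $\mathcal{C}_1(\psi)$ and $\mathcal{C}_2(\psi)$.

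For the forward direction, assume $G_1\overset{\psi}{\rightarrow}G_2$. Applying condition (ii) to edges of $\mathcal{C}_1(\psi)$ gives the first bullet. For the second bullet, fix $\{e,f\}\in\mathcal{C}_2(\psi)$: if $G_1$ contains exactly one of $e,f$, then this pair (in the appropriate order) is choosable, so condition (i) forces $G_2$ to contain exactly one of $e,f$ as well; if $G_1$ contains both or neither, then neither $e$ nor $f$ lies in any choosable pair, so condition (ii) forces $G_1$ and $G_2$ to agree on each of $e$ and $f$. In every case $|E(G_1)\cap\{e,f\}|=|E(G_2)\cap\{e,f\}|$.

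For the backward direction, assume the two bullets hold and verify (i)--(ii). For (i): a choosable pair $(e,f)$ satisfies $\{e,f\}\in\mathcal{C}_2(\psi)$ with $|E(G_1)\cap\{e,f\}|=1$, so the second bullet yields $|E(G_2)\cap\{e,f\}|=1$, which is exactly what (i) asserts. For (ii): let $e$ lie in no choosable pair; if $e\in\mathcal{C}_1(\psi)$, the first bullet gives agreement on $e$; otherwise $\{e,f\}\in\mathcal{C}_2(\psi)$ with $G_1$ containing both or neither of $e,f$, so $|E(G_1)\cap\{e,f\}|\in\{0,2\}$, and by the second bullet $|E(G_2)\cap\{e,f\}|$ equals this same value in $\{0,2\}$, whence $G_1$ and $G_2$ agree on $e$. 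This establishes the claimed equivalence.

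Finally, for the ``in particular'' assertion, I would note that the two bullet conditions are manifestly symmetric in $G_1,G_2$ (alternatively, symmetry follows from $\psi^{-1}=\psi$), are trivially satisfied when $G_1=G_2$ (reflexivity), and are transitive since both ``$G_1,G_2$ agree on all edges of $\mathcal{C}_1(\psi)$'' and ``$|E(G_1)\cap\{e,f\}|=|E(G_2)\cap\{e,f\}|$ for all $\{e,f\}\in\mathcal{C}_2(\psi)$'' are transitive relations. I do not expect a real obstacle here; the only point requiring slight care is in the backward verification of (ii), where one must combine the \emph{equality} $|E(G_1)\cap\{e,f\}|=|E(G_2)\cap\{e,f\}|$ with the fact that this common value is $0$ or $2$ in order to conclude agreement on the single edge $e$, not merely on the pair $\{e,f\}$.
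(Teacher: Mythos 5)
Your proposal is correct and follows essentially the same route as the paper: both derive the statement from \cref{lemma: properties of choosable pairs} via the dictionary that, for an involution $\psi$, every choosable pair $(e,f)$ for $(G_1,\psi)$ has $\{e,f\}\in\mathcal{C}_2(\psi)$, and a pair $\{e,f\}\in\mathcal{C}_2(\psi)$ yields a choosable pair if and only if $G_1$ contains exactly one of $e,f$. You merely spell out the case analysis (and the equivalence-relation conclusion) in more detail than the paper does, with no gap.
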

\begin{proof}
    This follows from \cref{lemma: properties of choosable pairs}. 
    For every choosable pair $(e,f)$ for $(G_1, \psi)$, we must have $\{e,f\} \in \mathcal{C}_2(\psi)$, because $\psi(e) = f$ and $e \neq f$ (see \cref{def: choosable pairs}). 
    Also, for $\{e,f\} \in \mathcal{C}_2(\psi)$, we have that $(e,f)$ or $(f,e)$ is a choosable pair for $(G_1,\psi)$ if and only if $G$ contains exactly one of the edges $e,f$. 
\end{proof}

\cref{prop: involution relation} implies that for an involution $\psi$, every {\em $\psi$-component} of $r$-graphs, i.e. a set of $r$-graphs $X$ connected in the relation $G \overset{\psi}{\rightarrow} G'$, is a clique, namely, $G \overset{\psi}{\rightarrow} G'$ for all $G,G' \in X$. We refer to such a set $X$ as a {\em $\psi$-clique}. When $\psi$ is not specified, we will call $X$ an {\em involution clique}. 

Using the above, we now provide the lower bound in \cref{thm:tight}.
The construction is an involution clique. We also show that every involution cliques has at most the extremal size $2^{f_r(n)}$.

\begin{proposition}\label{prop:extremal construction}
    Let $2 \leq r \leq n$.
    \begin{enumerate}
        \item There exists a difference-isomorphic family of $r$-graphs on $[n]$ of size $2^{f_r(n)}$.
        \item For every involution $\psi$, every $\psi$-clique has size at most $2^{f_r(n)}$.
    \end{enumerate}

\end{proposition}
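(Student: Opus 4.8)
The plan is to prove the two parts of \cref{prop:extremal construction} in sequence, with part 2 being the conceptually central one and part 1 following as a near-immediate consequence once we pick the right involution.

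\textbf{Part 2.} Fix an involution $\psi \in S_n$ and let $X$ be a $\psi$-clique; pick any $G_0 \in X$. By \cref{prop: involution relation}, every $G \in X$ satisfies $G \overset{\psi}{\rightarrow} G_0$, so $G$ and $G_0$ agree on all edges in $\mathcal{C}_1(\psi)$, and for each pair $\{e,f\} \in \mathcal{C}_2(\psi)$ the graph $G$ contains the same number of edges of $\{e,f\}$ as $G_0$ does. This means $G$ is determined by the following data: for each pair $\{e,f\} \in \mathcal{C}_2(\psi)$ on which $G_0$ contains exactly one edge, a choice of which of the two edges lies in $G$ (the other pairs, where $G_0$ contains zero or two of $e,f$, are forced, as are the edges in $\mathcal{C}_1(\psi)$). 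Hence $|X| \le 2^{|\mathcal{C}_2(\psi)|}$, and since \cref{claim: fixed r-sets} gives $|\mathcal{C}_2(\psi)| \le f_r(n)$ for every involution, we conclude $|X| \le 2^{f_r(n)}$.

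\textbf{Part 1.} Invoke \cref{claim: fixed r-sets} to fix an involution $\psi$ with at most one fixed point and with $|\mathcal{C}_2(\psi)| = f_r(n)$. Now choose $G_0$ to be a graph that contains exactly one edge from \emph{every} pair $\{e,f\} \in \mathcal{C}_2(\psi)$ — for instance, for each pair pick the lexicographically smaller edge — and let $\mathcal{G}$ be the set of all $r$-graphs obtained from $G_0$ by, independently for each pair in $\mathcal{C}_2(\psi)$, keeping the chosen edge or swapping to the other one (and keeping the edges of $\mathcal{C}_1(\psi)$ exactly as in $G_0$). By construction $|\mathcal{G}| = 2^{|\mathcal{C}_2(\psi)|} = 2^{f_r(n)}$, and by \cref{prop: involution relation} any two members of $\mathcal{G}$ satisfy $G \overset{\psi}{\rightarrow} G'$ (they agree on $\mathcal{C}_1(\psi)$ and contain exactly one edge of each $\mathcal{C}_2(\psi)$-pair, hence the same number). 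Thus $\mathcal{G}$ is a $\psi$-clique, and in particular difference-isomorphic: $\psi$ realizes an isomorphism $G \setminus G' \to G' \setminus G$ for every $G, G' \in \mathcal{G}$.

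There is essentially no hard step here — everything reduces to \cref{prop: involution relation} and \cref{claim: fixed r-sets}, both already available. The only point needing the tiniest bit of care is the verification in part 1 that such a ``transversal'' graph $G_0$ exists, i.e. that one can simultaneously select one edge from each pair in $\mathcal{C}_2(\psi)$; but this is immediate since the pairs in $\mathcal{C}_2(\psi)$ are disjoint (each edge lies in at most one $2$-cycle of $\tilde{\psi}$), so the choices are independent. I would also remark explicitly that since $\psi$ is the same isomorphism for all pairs, $\mathcal{G}$ has the strong structural property mentioned after \cref{thm:tight}, foreshadowing the stability statement.
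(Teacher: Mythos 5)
Your proposal is correct and follows essentially the same route as the paper: both parts reduce to \cref{prop: involution relation} together with \cref{claim: fixed r-sets}, with Part 1 built from an involution having at most one fixed point and Part 2 bounding the clique by at most two choices per pair in $\mathcal{C}_2(\psi)$. The only difference is cosmetic (you fix a reference graph $G_0$ and phrase the construction as swaps from a transversal), which matches the paper's argument in substance.
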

\begin{proof}
For Item 1, let $\psi \in S_n$ be an involution with at most 1 fixed points. 
Let $\{e_1,f_1\},\dots,\{e_m,f_m\}$ be the 2-cycles of $\psi$. By \cref{claim: fixed r-sets}, we have $m = f_r(n)$.
Let $\cal G$ be the set of all graphs that contain exactly one of the edges $e_i,f_i$ for every $1 \leq i \leq m$, and do not contain any other edges (the remaining edges are the edges in $\mathcal{C}_1(\psi)$). 
Then $|\mathcal{G}| = 2^m = 2^{f_r(n)}$. 
Also, \cref{prop: involution relation} implies $G_1 \overset{\psi}{\rightarrow} G_2$ for every $G_1,G_2 \in \mathcal{G}$, meaning that $\mathcal{G}$ is difference-isomorphic. 

For Item 2, let $\mathcal{G}$ be a $\psi$-clique. Then $|\mathcal{G}| \leq 2^{|\mathcal{C}_2(\psi)|}$ by \cref{prop: involution relation} (and equality holds only if for every $(e,f) \in \mathcal{C}_2(\psi)$, all the graphs in $\mathcal{G}$ contain exactly one of the edges $e,f$). Indeed, the graphs in $\mathcal{G}$ have at most $2$ choices for each pair $(e,f) \in \mathcal{C}_2$, and no choice for the remaining edges. By \cref{claim: fixed r-sets}, $|\mathcal{C}_2(\psi)| \leq f_r(n)$, so indeed $|\mathcal{G}| \leq 2^{f_r(n)}$.
\end{proof}

The construction given by \cref{prop:extremal construction} is an involution clique. As mentioned in the introduction, we can prove the following stability version of \cref{thm:tight}, stating that any difference-isomorphic family which is not an involution clique has smaller size. 

\begin{theorem}\label{theorem: stablility}
    For every $r \geq 2$, there is $n_0 = n_0(r)$ such that for every $n \geq n_0$ and every difference-isomophic family $\cal G$ of $r$-graphs on $[n]$, if $\cal G$ is not an involution clique then 
    $$
    \abs{\mc{G}} \leq
    \begin{cases}
    \big(1-n^{-O(\sqrt{n})}\big) \cdot 2^{f_2(n)} & r = 2, \\
    e^{-\Omega(n^{r-2})} \cdot 2^{f_r(n)} & r \geq 3.
    \end{cases}
    $$
\end{theorem}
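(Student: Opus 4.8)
The plan is to revisit the argument behind \cref{thm:tight} and carry it out quantitatively, tracking the slack in each inequality and showing that this slack already exceeds the claimed amount whenever $\mathcal{G}$ is not an involution clique. Fix a base graph $G_0 \in \mathcal{G}$. Two elementary constraints are used throughout: since $G \setminus G' \cong G' \setminus G$ forces $\abs{E(G)} = \abs{E(G')}$, all graphs in $\mathcal{G}$ have the same number of edges; and every $G \in \mathcal{G}$ satisfies $G_0 \overset{\varphi_G}{\rightarrow} G$ for some $\varphi_G \in S_n$, so by \cref{lemma: properties of choosable pairs} the graph $G$ is determined by the edge-action of $\varphi_G$ together with, for each choosable pair $(e,f)$ of $(G_0,\varphi_G)$, which of $e,f$ lies in $G$. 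The main step is to prove, by the method of \cref{thm:tight}, a refinement of its conclusion: there is an involution $\psi \in S_n$ and a $\psi$-clique $\mathcal{K}$ (\cref{prop: involution relation}) with
\[
\abs{\mathcal{G}} \;\le\; 2^{\,\abs{\mathcal{C}_2(\psi)}\, -\, D(\mathcal{G},\psi)},
\]
where $D(\mathcal{G},\psi) \ge 0$ is an explicit ``defect'' that vanishes exactly when $\mathcal{G}$ is a full $\psi$-clique and $\psi$ has at most one fixed point. Combined with $\abs{\mathcal{C}_2(\psi)} \le f_r(n)$ (\cref{claim: fixed r-sets}) this re-derives \cref{thm:tight}, and its saturation analysis shows that equality forces $\mathcal{G}$ to be an involution clique.

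Now assume $\mathcal{G}$ is difference-isomorphic but not an involution clique. Then $\mathcal{G}$ is not a $\psi$-clique for the $\psi$ above, so $D(\mathcal{G},\psi) > 0$; the substance is to lower-bound it, and it suffices to do so in the main case $\abs{\mathcal{C}_2(\psi)} = f_r(n)$ (i.e.\ $\psi$ has at most one fixed point). For this I would exploit the following robustness phenomenon. Since $\mathcal{G}$ is not a $\psi$-clique, some member $G^\ast \in \mathcal{G}$ is not $\psi$-related to another member; yet $G^\ast$ is difference-isomorphic with \emph{every} member of $\mathcal{G}$, a set of size close to $2^{f_r(n)}$ by the previous step. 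For each such $G$ the witnessing permutation $\psi'(G)$ must differ from $\psi$, and analyzing --- via \cref{def: choosable pairs} and \cref{lemma: properties of choosable pairs} --- which edges $\psi'(G)$ is forced to move identifies a ``support'' $S$, the region where $\psi$ cannot serve $G^\ast$, on which $\mathcal{G}$ can no longer behave like a free $\psi$-clique. Double-counting over $\mathcal{G}$ then converts $\abs{S}$ into the claimed loss, so everything reduces to a lower bound on $\abs{S}$.

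The size of $S$, and the precise way it constrains $\mathcal{G}$, is exactly where the two cases of the theorem diverge. For $r \ge 3$ any deviation from involution-clique structure is ``thick'': it forces a set of $\Omega(n^{r-2})$ $r$-edges, each of which --- instead of contributing one free bit per pair of graphs as in a $\psi$-clique --- is trapped in a longer orbit of the witnessing permutations and so contributes strictly less; summing the deficits over these $\Omega(n^{r-2})$ edges reduces the effective dimension of $\mathcal{G}$ by $\Omega(n^{r-2})$, giving $\abs{\mathcal{G}} \le 2^{f_r(n) - \Omega(n^{r-2})} = e^{-\Omega(n^{r-2})} \cdot 2^{f_r(n)}$. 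For $r = 2$, by contrast, a deviation can be ``thin'', carried by only $O(\sqrt n)$ vertices, and then it merely forbids a $2^{-O(\sqrt n \log n)} = n^{-O(\sqrt n)}$ fraction of the $2^{f_2(n)}$ candidate graphs rather than dropping the dimension; showing that no thinner deviation is possible --- so that $\sqrt n$ is the correct threshold --- is the delicate point in this case.

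The main obstacle is the robustness step: making the proof of \cref{thm:tight} quantitative enough to supply a usable lower bound on $D(\mathcal{G},\psi)$ --- equivalently, to show that failing to be an involution clique costs a measurable quantity rather than a single graph. One must track how a single rogue member $G^\ast$, or a single pair whose differences are isomorphic only through a non-involution, propagates constraints to the rest of $\mathcal{G}$: for $r \ge 3$ this over-constrains a block of $\Omega(n^{r-2})$ edge-slots, dropping the effective dimension below $f_r(n)$ by that much, whereas for $r = 2$ it only excludes a $n^{-O(\sqrt n)}$ fraction of the candidate graphs. Establishing the thresholds $\sqrt n$ and $n^{r-2}$ precisely, and cleanly separating these thin ($r=2$) and thick ($r \ge 3$) regimes, is then careful but essentially routine estimation.
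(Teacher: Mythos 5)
There is a genuine gap: your proposal is a programme, not a proof, and its central object is never constructed. You propose to ``revisit the argument behind \cref{thm:tight} and carry it out quantitatively,'' but in this paper \cref{thm:tight} has no independent proof to quantify --- it is deduced \emph{from} \cref{theorem: stablility} together with Item 2 of \cref{prop:extremal construction}; the only self-contained upper bound available beforehand is the weak $2^{(\frac12+o(1))\binom{n}{r}}$ estimate of \cref{claim:approximate bound}, whose slack is nowhere near $f_r(n)$. Consequently the ``main step,'' the existence of an involution $\psi$ and a defect $D(\mathcal{G},\psi)\ge 0$ with $\abs{\mathcal{G}}\le 2^{\abs{\mathcal{C}_2(\psi)}-D(\mathcal{G},\psi)}$, is an assertion of essentially the full theorem rather than a reduction of it. Even the weaker statement that a near-extremal difference-isomorphic family must contain a large involution clique is a substantial result in the paper (\cref{theorem without large involution clique}), and it already requires the whole quantitative machinery you leave unspecified: the choosable-pair bound $e_\psi(N_\varphi(G))\le 4^{m_g}\cdot 3.9^{m-m_g}$ of \cref{claim: count in terms of good pairs}, the transfer from $2$-cycles of $\tilde\psi$ to $2$-cycles of $\psi$, \cref{lemma: general counts for exceptional r-graph}, and the count of permutations close to a given one.

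The place where your sketch would actually fail is the step ``double-counting over $\mathcal{G}$ then converts $\abs{S}$ into the claimed loss.'' For $r=2$ a single witnessing permutation $\varphi$ that is \emph{not} an involution can satisfy $\abs{N_\varphi(G_0)\cap\mathcal{G}'}=\Omega\bigl(2^{f_2(n)}\bigr)$ --- this is exactly the phenomenon exhibited in \cref{prop: tightness of stability} and the reason the paper cannot argue via a uniform per-permutation loss. Handling it is the content of \cref{lemma: counts for exceptions} (the exceptional-pair structure with the degree condition $d_G(x)>\frac n2>d_G(y)$), \cref{lem: leftover of psi clique is small}, \cref{lemma: any non-small involution clique is large}, and above all the case analysis and cycle-chasing induction of \cref{lemma: no very large involution clique}, which is where the threshold $n^{-O(\sqrt n)}$ actually comes from; your proposal compresses all of this into ``the delicate point'' and ``essentially routine estimation'' without an argument. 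Likewise for $r\ge 3$ you assert that any deviation ``forces $\Omega(n^{r-2})$ trapped edges,'' but give no mechanism; in the paper this loss comes from \cref{lemma: counts for exceptions r-graph}, i.e.\ from showing $e_\psi(N_\varphi(G))\le 2^{2f_r(n)}e^{-\binom{n}{r-2}/100}$ for every $\varphi\ne\psi$, which again rests on the good-choosable-pair analysis you do not supply. As written, the proposal restates the theorem's conclusion in the language of a ``defect'' and a ``support'' but does not prove it.
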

\noindent
Observe that \cref{thm:tight} follows by combining \cref{theorem: stablility} and Item 2 in \cref{prop:extremal construction}.

In the appendix, we will construct a difference-isomorphic $r$-graph family on $[n]$ which is not an involution clique, but has size $e^{-O(n^{r-2})}2^{f_r(n)}$. 
This shows that when $r\ge 3$, \cref{theorem: stablility} is tight up to the hidden constant on the exponent. 
We do not know if it is also tight for $r=2$, i.e. if there is a difference-isomorphic graph-family on $[n]$ which is not an involution clique and has size $(1-o(1))2^{f_2(n)}$.

\subsection{An approximate upper bound}

Here we give an easy proof of an asymptotic version of \Cref{thm:tight}, namely, we show that a difference-isomorphic family of $r$-graphs on $[n]$ has size at most $2^{\left(\frac{1}{2}+o(1)\right)\binom{n}{r}}$. 
For an $r$-graph $G$, let $G^c$ denote the complement of $G$. 
We start with the following observation.
\begin{lemma}\label{claim:complement}
    If $\mathcal{G}$ is difference-isomorphic, then so is $\{G^c : G \in \mathcal{G}\}$.
\end{lemma}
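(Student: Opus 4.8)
The plan is to reduce the statement to a single set-theoretic identity about complements and set differences. The key observation I would record first is that for any two $r$-graphs $G_1,G_2$ on $[n]$, we have the equality of $r$-graphs (not merely isomorphism)
\[
G_1 \setminus G_2 = G_2^c \setminus G_1^c .
\]
Indeed, an edge $e \in \binom{[n]}{r}$ lies in $G_1 \setminus G_2$ exactly when $e \in E(G_1)$ and $e \notin E(G_2)$; equivalently $e \notin E(G_1^c)$ and $e \in E(G_2^c)$; equivalently $e \in G_2^c \setminus G_1^c$. Symmetrically, $G_2 \setminus G_1 = G_1^c \setminus G_2^c$.

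Given this, the proof is immediate. Take any two members of $\{G^c : G \in \mathcal{G}\}$; they are of the form $G_1^c, G_2^c$ for some $G_1,G_2 \in \mathcal{G}$. Since $\mathcal{G}$ is difference-isomorphic, $G_1 \setminus G_2$ is isomorphic to $G_2 \setminus G_1$. Combining this with the two identities above gives
\[
G_2^c \setminus G_1^c = G_1 \setminus G_2 \;\cong\; G_2 \setminus G_1 = G_1^c \setminus G_2^c,
\]
so the pair $G_1^c, G_2^c$ satisfies the defining condition. As this holds for every pair, $\{G^c : G \in \mathcal{G}\}$ is difference-isomorphic, as claimed. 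There is no real obstacle here: the only thing to be careful about is that the displayed identity is an honest equality of edge sets on the common vertex set $[n]$, so that the isomorphism $G_1\setminus G_2 \to G_2\setminus G_1$ transfers verbatim to an isomorphism $G_2^c\setminus G_1^c \to G_1^c\setminus G_2^c$.
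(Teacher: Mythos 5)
Your proof is correct and follows essentially the same route as the paper: both rest on the identity $G_1\setminus G_2 = G_2^c\setminus G_1^c$ (equivalently $G_2\setminus G_1 = G_1^c\setminus G_2^c$) as an equality of edge sets on $[n]$, from which the isomorphism transfers immediately. No further comment is needed.
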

\begin{proof}
    For every two $r$-graphs $G_1,G_2 \in \mathcal{G}$, it holds that $G_1^c \setminus G_2^c = G_2 \setminus G_1$ and $G_2^c\setminus G_1^c = G_1 \setminus G_2$. Hence, $G_1^c \setminus G_2^c$ is isomorphic to $G_2^c\setminus G_1^c$. 
\end{proof}
\noindent
In addition, if $\mathcal{G}$ is difference-isomorphic, then all $r$-graphs in $\mathcal{G}$ have the same amount of edges.
\begin{lemma}\label{claim:approximate bound}
    Let $\mathcal{G}$ be a difference-isomorphic family where all $r$-graphs have $m$ edges. Then $|\mathcal{G}| \leq \nolinebreak n! \cdot \nolinebreak 2^m$. 
\end{lemma}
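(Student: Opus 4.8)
The plan is to fix one graph $G_0 \in \mathcal{G}$ as a reference point and show that every other $G \in \mathcal{G}$ is determined by a choice of a permutation $\varphi \in S_n$ together with an element of $N_\varphi(G_0)$, after which the bound $|N_\varphi(G_0)| \le 2^m$ from \cref{lemma: properties of choosable pairs} finishes the argument. First I would use the hypothesis: since $\mathcal{G}$ is difference-isomorphic, for every $G \in \mathcal{G}$ the graphs $G_0 \setminus G$ and $G \setminus G_0$ are isomorphic, so there is a permutation $\varphi \in S_n$ with $\varphi(G_0 \setminus G) = G \setminus G_0$, i.e.\ $G_0 \overset{\varphi}{\rightarrow} G$, which says precisely $G \in N_\varphi(G_0)$. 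Thus the map sending each $G \in \mathcal{G}$ to a pair $(\varphi, G)$ witnessing $G \in N_\varphi(G_0)$ (choosing one such $\varphi$ per $G$) is injective, and its image is contained in $\bigcup_{\varphi \in S_n} \{\varphi\} \times N_\varphi(G_0)$.

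Next I would bound the size of this union: there are $n!$ choices of $\varphi$, and by \cref{lemma: properties of choosable pairs} we have $|N_\varphi(G_0)| \le 2^{m_\varphi}$ where $m_\varphi$ is the number of choosable pairs of $(G_0,\varphi)$. Since every choosable pair $(e,f)$ has $e \in G_0$ and distinct edges $e \ne f$, and each edge $e \in E(G_0)$ lies in at most one choosable pair, we get $m_\varphi \le |E(G_0)| = m$. Hence $|N_\varphi(G_0)| \le 2^m$ for every $\varphi$, and therefore $|\mathcal{G}| \le \sum_{\varphi \in S_n} |N_\varphi(G_0)| \le n! \cdot 2^m$, as desired.

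There is essentially no serious obstacle here; the only point requiring a little care is making sure the reference graph $G_0$ is handled correctly when $\mathcal{G}$ is empty (in which case the statement is vacuous, so assume $\mathcal{G} \ne \emptyset$) and noting that $G_0$ itself is counted via $\varphi = \mathrm{id}$ since $G_0 \in N_{\mathrm{id}}(G_0)$. The bound $m_\varphi \le m$ is where the common-edge-count hypothesis (all graphs in $\mathcal{G}$, in particular $G_0$, have exactly $m$ edges) enters, though for this lemma we really only need that the fixed graph $G_0$ has $m$ edges — the fact that all members of $\mathcal{G}$ have $m$ edges is the separate observation noted just before the lemma statement and is not needed in this particular argument.
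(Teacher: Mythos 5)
Your proof is correct, but it takes a different (essentially dual) route from the paper's. You fix a reference graph $G_0$ and partition $\mathcal{G}$ according to the witnessing permutation, using $\mathcal{G} \subseteq \bigcup_{\varphi \in S_n} N_\varphi(G_0)$ and then the choosable-pairs machinery of \cref{lemma: properties of choosable pairs} to bound each class: $|N_\varphi(G_0)| \le 2^{m_\varphi} \le 2^m$, since distinct choosable pairs of $(G_0,\varphi)$ have distinct first coordinates lying in $E(G_0)$. The paper instead partitions by the value of the difference itself: for each of the $2^m$ subgraphs $F \subseteq G_0$ it bounds the number of $G'$ with $G_0 \setminus G' = F$ by $n!$, because $G' \setminus G_0$ must lie in the isomorphism class of $F$ (size at most $n!$) and $G'$ is then determined by $G' \setminus G_0$ together with $G_0 \setminus F$. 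So the roles of the factors $n!$ and $2^m$ are swapped: the paper's argument is self-contained and does not invoke \cref{lemma: properties of choosable pairs}, while yours is slightly less elementary but fits naturally into the $N_\varphi$-decomposition framework that the paper uses heavily in the main proof, and, as you note, it only needs the reference graph $G_0$ to have $m$ edges. Both arguments are valid and yield the same bound $n!\cdot 2^m$; your handling of the empty-family case and of $G_0 \in N_{\mathrm{id}}(G_0)$ is fine.
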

\begin{proof}
    Fix any $r$-graph $G \in \mathcal{G}$. For each subgraph $F$ of $G$, let $\mathcal{G}_F$ be the set of all $G' \in \mathcal{G}$ with $G \setminus G' = F$. Then $G' \setminus G$ is isomorphic to $F$. It follows that there are at most $n!$ choices for $G' \setminus G$ (because each isomorphism class has size at most $n!$). As $G' \setminus G$ determines $G' = (G' \setminus G) \cup (G \setminus F)$, we get that
    $|\mathcal{G}_F| \leq n!$. 
    The lemma follows by summing over all $2^{e(G)} = 2^m$ choices for $F$.
\end{proof}

Now let $\mathcal{G}$ be a difference-isomorphic family of $r$-graphs on $[n]$. By \Cref{claim:complement}, we may assume that all $r$-graphs in $\mathcal{G}$ have size at most $\floor{\frac{1}{2}\binom{n}{r}}$. Then \Cref{claim:approximate bound} implies that 
$|\mathcal{G}| \leq 
n! \cdot 2^{\frac{1}{2}\binom{n}{r}} = 
2^{\left(\frac{1}{2}+o(1)\right)\binom{n}{r}}$, as claimed.



\section{Proof of \cref{thm:tight}}\label{sec:proof}

In this section we prove \cref{theorem: stablility}, which also implies \cref{thm:tight}.
Throughout the section, we will assume that $n$ is large enough as a function of $r$, wherever needed. 

\subsection{Proof overview}
For a set of $r$-graphs $X$ and $\psi \in S_n$, let $e_{\psi}(X)$ denote the number of (ordered) pairs $(G_1,G_2) \in X\times X$ such that $G_1 \overset{\psi}{\rightarrow} G_2$.
Note that the pairs $(G_1,G_1)$ are always counted because $G_1 \overset{\psi}{\rightarrow} G_1$ holds trivially. 
Also, if $X$ is a difference-isomorphic family of graphs, then $\sum_{\psi \in S_n} e_{\psi}(X) \geq |X|^2$, because for every pair $G_1,G_2 \in X$ there is $\psi \in S_n$ such that $G_1 \overset{\psi}{\rightarrow} G_2$.

Let $\cal G$ be a difference-isomorphic family of $r$-graphs.
For every $G \in \mc{G}$ and every $\varphi,\psi\in S_n$, we will consider $e_\psi(N_\varphi(G))$, i.e. the number of pairs $G_1,G_2 \in N_\varphi(G)$ such that $G_1 \overset{\psi}{\rightarrow} G_2$.
One of the main ingredients in the proof of \cref{theorem: stablility} is upper bounds for $e_\psi(N_\varphi(G))$, and this is the main focus of \cref{subsec:e_{psi}}.
Roughly speaking, we will show that $e_\psi(N_\varphi(G))$ is much smaller than $2^{2f_r(n)}$, which is the number of pairs in the extremal example, unless $\psi$ is an involution and $\varphi$ is very close to $\psi$. More precisely, we will show that unless $\varphi,\psi$ have this specific structure, it holds that
$e_\psi(N_\varphi(G)) \leq 2^{2f_r(n)} \cdot e^{-\Omega(n^{r-1})}$; see \cref{lemma: general counts for exceptional r-graph}. 
We note that due to this bound, the proof for the case $r \geq 3$ is simpler than for the case $r=2$. This is because for $r \geq 3$ we have $e^{\Omega(n^{r-1})} \gg n!$, allowing us to sum the above bound over all permutations $\psi \in S_n$. 
Thus, let us assume for now that $r \geq 3$.  
We use our bounds on $e_\psi(N_\varphi(G))$ to bound $|N_{\varphi}(G) \cap \mathcal{G}|$, using the fact that $|N_\varphi(G)\cap \mc{G}|^2 \leq \sum_{\psi \in S_n} e_{\psi}(N_\varphi(G) \cap \mathcal{G})$ (using that $\mathcal{G}$ is difference-isomorphic).
Due to the above, for a fixed $\varphi$, we have a very strong bound on $e_\psi(N_\varphi(G))$ for almost all permutations $\psi$ (i.e., for all $\psi$ which are not very close to $\varphi$). For the remaining $\psi$, we can show that $e_\psi(N_\varphi(G)) \leq 
2^{f_r(n)} \cdot e^{-\Omega(n)}$ unless $\varphi = \psi$ is an involution (see \cref{lemma: counts for exceptions r-graph}). 
This bound is weaker, but it suffices because the number of the remaining $\psi$s is only $e^{o(n)}$. 
Thus, combining the above, we conclude that unless $\varphi$ is an involution, $|N_{\varphi}(G) \cap \mathcal{G}|$ is much smaller than the extremal size $2^{f_r(n)}$, i.e. $|N_{\varphi}(G) \cap \mathcal{G}| \leq 2^{f_r(n)} \cdot e^{-\Omega(n)}$.
We note that if $r=2$ then this is no longer true; $N_\varphi(G) \cap \mathcal{G}$ can be of size $\Omega(2^{f_2(n)})$ even if $\varphi$ is not an involution (see the construction in the proof of \cref{prop: tightness of stability}). Consequently, the proof for the case $r=2$ is considerably more involved than for $r \geq 3$. The proof for $r \geq 3$ is given in \cref{subsec:main proof r>=3} and the proof for $r=2$ is given in \cref{subsec:main proof r=2}. 

We now continue with the overview of the proof for the case $r \geq 3$. A key step in the proof is to show that if $|\mathcal{G}|$ is close to the extremal size $2^{f_r(n)}$, then $\mathcal{G}$ must contain a large involution clique (this is done in \cref{theorem without large involution clique}). To this end, we use the aforementioned fact that $|N_{\varphi}(G) \cap \mathcal{G}|$ is small whenever $\varphi$ is not an involution. Note also that for an involution $\varphi$, the set $N_{\varphi}(G) \cap \mathcal{G}$ is a $\varphi$-clique (see \cref{prop: involution relation}). Thus, if $\mathcal{G}$ does not contain a large involution clique, then $|N_{\varphi}(G) \cap \mathcal{G}|$ is small for every permutation $\varphi \in S_n$. 
If this is the case, then we can upper-bound $|\mathcal{G}|$, as follows. Fix an arbitrary $G \in \mathcal{G}$, and fix a permutation $\varphi_1 \in S_n$ for which 
$|N_{\varphi_1}(G) \cap \mathcal{G}| \geq |\mathcal{G}|/n!$ (such a permutation must exist by averaging). Letting $\mathcal{G}_1 = N_{\varphi}(G) \cap \mathcal{G}$ and $\mathcal{G}_2 = \mathcal{G} \setminus \mathcal{G}_1$, we consider, for each permutation $\varphi_2$, the number of pairs $(G_1,G_2) \in \mathcal{G}_1 \times \mathcal{G}_2$ with $G_1 \overset{\varphi_2}{\rightarrow} G_2$. It turns out that the number of such pairs is very small unless $\varphi_2$ is close to $\varphi_1$ (this is shown in \cref{lemma: codegree is usually small}). Hence, almost all such pairs $(G_1,G_2)$ correspond to only few (i.e. only $e^{o(n)}$) permutations $\varphi_2 \in S_n$. Thus, by averaging, we can find a permutation $\varphi_2$ and a graph $G_1 \in \mathcal{G}_1$ for which $|N_{\varphi_2}(G_1) \cap \mathcal{G}_2| \geq |\mathcal{G}_2| \cdot e^{-o(n)}$. On the other hand, we know that $|N_{\varphi_2}(G_1) \cap \mathcal{G}| \leq 2^{f_r(n)} \cdot e^{-\Omega(n)}$, which allows us to deduce that $\mathcal{G}_2$ is small. As $\mathcal{G}_1$ is also small (for the same reason, as $\mathcal{G}_1 = N_{\varphi_1}(G) \cap \mathcal{G}$), we conclude that $|\mathcal{G}| = |\mathcal{G}_1| + |\mathcal{G}_2| \ll 2^{f_r(n)}$. 

Summarizing the above discussion, we can show that if $|\mathcal{G}|$ is close to the extremal size $2^{f_r(n)}$, then $\mathcal{G}$ must contain a large involution clique. We will use this to conclude the proof of \cref{theorem: stablility} (for $r \geq 3$), as follows. 
Let $\mathcal{G}'$ be a largest involution clique in $\mathcal{G}$ and let $\psi$ be the corresponding involution. 
If $\mathcal{G}' = \mathcal{G}$ then $\mathcal{G}$ is an involution clique, so the statement of \cref{theorem: stablility} is vacuous. 
Otherwise, fix an arbitrary $G \in \mathcal{G} \setminus \mathcal{G}'$. Crucially, we have $N_{\psi}(G) \cap \mathcal{G}' = \emptyset$, because otherwise $\mathcal{G}' \cup \{G\}$ would be a $\psi$-clique by \cref{prop: involution relation}, contradicting the maximality of $\mathcal{G}'$.
Next, for every $\varphi \neq \psi$, we can bound $|N_{\varphi}(G) \cap \mathcal{G}'|$ by using the fact that $|N_{\varphi}(G) \cap \mathcal{G}'|^2 \leq e_{\psi}(N_{\varphi}(G))$, as $\mathcal{G}'$ is a $\psi$-clique. 
We then use this to upper bound $|\mathcal{G}'| \leq \sum_{\varphi \in S_n} |N_{\varphi}(G) \cap \mathcal{G}'|$. 
Here, too, we separate the permutations $\varphi$ into those which are not very close to $\psi$, for which we have a very strong bound (this covers almost all permutations), and the remaining few permutations, for which we have a weaker but sufficient bound. This allows us to show that $|\mathcal{G}'|$ is indeed small, giving a contradiction and completing the proof.

\subsection{Edge counts inside $N_\varphi(G)$}\label{subsec:e_{psi}}

We start with the following important definition.
\begin{definition}[Good choosable pairs]\label{def: good choosable pair}
Let $\varphi,\psi \in S_n$ and let $G$ be an $r$-graph.
Let $(e,f)$ be a choosable pair for $(G,\varphi)$. We say that $(e,f)$ is a {\em good choosable pair} for $(G,\varphi,\psi)$ if $\psi(e)=f$ and $\psi(f)=e$.
\end{definition}


The following lemma gives an upper bound on $e_{\psi}(N_\varphi(G))$ in terms of the number of choosable pairs of $(G,\varphi)$ and the number of good choosable pairs of $(G,\varphi,\psi)$.

\begin{lemma}\label{claim: count in terms of good pairs}
    Let $n \ge r \ge 2$.
    Let $\varphi,\psi \in S_n$ and let $G$ be an $r$-graph. Let $m$ be the number of choosable pairs of $(G,\varphi)$ and let $m_g$ be the number of good choosable pairs of $(G,\varphi,\psi)$. Then
    \begin{equation}\label{eq: m m_g bound}
        e_{\psi}(N_\varphi(G)) \le 4^{m_g} \cdot 3.9^{m-m_g}.
    \end{equation}
    In particular, if $m \leq \frac{1}{2}\binom{n}{r}-x$ and $m_g\leq\frac{1}{2}\binom{n}{r}-y$ for some $y\ge x\ge 0$, then 
    \begin{equation}\label{eq: practical m m_g bound}
        e_{\psi}(N_\varphi(G)) \le 2^{\binom{n}{r}-2x}e^{-(y-x)/40}.
    \end{equation}
\end{lemma}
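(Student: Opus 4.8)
The plan is to bound $e_\psi(N_\varphi(G))$ by counting, for each ordered pair $(G_1,G_2)\in N_\varphi(G)\times N_\varphi(G)$ with $G_1\overset{\psi}{\rightarrow}G_2$, how many choices are available. By \cref{lemma: properties of choosable pairs}, a graph $H\in N_\varphi(G)$ is determined by choosing, for each of the $m$ choosable pairs $(e,f)$ of $(G,\varphi)$, which of $e,f$ lies in $H$ (all other edges agree with $G$). So I will think of $G_1$ and $G_2$ each as an assignment of a bit to each of the $m$ choosable pairs, and count the number of bit-pairs $(b^1,b^2)$ for which the resulting $(G_1,G_2)$ satisfies $G_1\overset{\psi}{\rightarrow}G_2$.

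The first and main step is to understand the constraint $G_1\overset{\psi}{\rightarrow}G_2$ edge by edge. The key point is that for any edge $e$ that is \emph{not} in a good choosable pair of $(G,\varphi,\psi)$, the constraint from $\psi$ acting on $e$ (via \cref{lemma: properties of choosable pairs} applied to $(G_1,\psi)$, or directly the definition $\psi(G_1\setminus G_2)=G_2\setminus G_1$) strictly cuts down the number of admissible local configurations, whereas on a good choosable pair the two bits of $G_1$ and $G_2$ on that pair are essentially free (giving the full factor $4$). Concretely, I would partition $\binom{[n]}{r}$ into: (a) the $2m_g$ edges lying in good choosable pairs; (b) the $2(m-m_g)$ edges lying in non-good choosable pairs; (c) the remaining edges, on which both $G_1$ and $G_2$ agree with $G$ and hence contribute a factor $1$. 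On the edges of type (a), the $\psi$-action maps $e\mapsto f$ within the same choosable pair $\{e,f\}$, and $G_1\overset{\psi}{\rightarrow} G_2$ imposes no constraint linking these bits across the two graphs beyond what is automatically satisfied — so the contribution is at most $2\cdot 2=4$ per good pair. For the edges of type (b), I need to show the local count is strictly below $4$; averaging over the possible interactions (the $\psi$-orbit of such an edge hits either $\mathcal{C}_1(\psi)$ or a different choosable pair or a $\mathcal{C}_2$-pair not of choosable-pair type) one gets at most $3.9$ (some explicit constant $<4$) per non-good choosable pair. Multiplying gives \eqref{eq: m m_g bound}; the main obstacle is doing this local case analysis cleanly, i.e. grouping edges of type (b) into $\psi$-orbits and verifying that each orbit of $t$ choosable-pair edges contributes at most $c^t$ for a constant $c\le 3.9$ — this is the heart of the argument and requires care because the $\varphi$-pairing and the $\psi$-pairing need not be aligned on these edges.

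The second step is purely arithmetic: deriving \eqref{eq: practical m m_g bound} from \eqref{eq: m m_g bound}. Write $N=\binom{n}{r}$. From $m\le \tfrac12 N-x$ and $m_g\le \tfrac12 N-y$ we have
\[
4^{m_g}\cdot 3.9^{m-m_g}=4^{m}\cdot(3.9/4)^{m-m_g}=2^{2m}\cdot(3.9/4)^{m-m_g}.
\]
Since $m\le \tfrac12 N-x$ gives $2^{2m}\le 2^{N-2x}$, it remains to bound $(3.9/4)^{m-m_g}$. Here $m-m_g\ge (\tfrac12 N - y)$-type control is not directly what we have — rather, $m\ge m_g$ always, and $m_g\le \tfrac12 N - y$ while $m$ could be as large as $\tfrac12 N - x$, so $m - m_g$ could in principle be small. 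The correct route is: either $m-m_g \ge y-x$, in which case $(3.9/4)^{m-m_g}\le (3.9/4)^{y-x}\le e^{-(y-x)/40}$ using $\ln(4/3.9)>1/40$; or $m-m_g < y-x$, in which case $m_g > m-(y-x)\ge \cdots$ — here I should instead bound $4^{m_g}$ directly by $4^{\,N/2-y}=2^{N-2y}\le 2^{N-2x}\cdot 2^{-2(y-x)}\le 2^{N-2x}e^{-(y-x)/40}$, and bound the remaining $3.9^{m-m_g}\le 4^{m-m_g}=2^{2(m-m_g)}\le 2^{2((N/2-x)-m_g)}$, so that $4^{m_g}3.9^{m-m_g}\le 2^{N-2x}$ times a factor $\le 1$; combining the two cases gives \eqref{eq: practical m m_g bound}. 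I expect this arithmetic to go through with the constant $1/40$ chosen with some slack, so the only genuinely delicate part of the whole lemma is the orbit-by-orbit local counting in Step 1.
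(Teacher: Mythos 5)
There is a genuine gap: the heart of the lemma is the inequality \eqref{eq: m m_g bound}, and your plan does not prove it — it reduces to the statement that the bad (non-good) choosable pairs admit an amortized factor strictly below $4$, and then defers exactly that step ("requires care"). Worse, the local claim you would need, namely that \emph{each} bad choosable pair individually contributes at most $3.9<4$, is false in general: if $\psi$ maps an edge of one choosable pair into a \emph{different} choosable pair (the paper's type (iii)), all four configurations of $(G_1,G_2)$ on that single pair can be realized, so no saving is visible pair-by-pair or even orbit-by-orbit without further work. The paper's proof handles this by a dichotomy: if a constant fraction of the bad pairs are of type (i)/(ii) (i.e.\ $\psi(e_i)=e_i$ or $\psi(e_i)$ lies outside all choosable pairs), then the configuration $e_i\in G_1\setminus G_2$, $f_i\in G_2\setminus G_1$ is impossible and one gets $4\to 3$ locally; otherwise most bad pairs interact with another choosable pair, and one builds an auxiliary bounded-degree digraph on the choosable pairs, extracts a matching of size at least $(m-m_g)/6$, and shows that each matched \emph{couple} of choosable pairs admits at most $13$ of the $16$ joint configurations. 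The $3.9$ then comes out of amortizing $4^{5/6}3^{1/6}$, resp.\ $4\cdot(13/16)^{1/6}$, over the bad pairs. None of this (the dichotomy, the matching to control overlapping interactions, the $16\to 13$ joint count) appears in your sketch, and it is precisely the nontrivial content of the lemma.

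On the arithmetic step, your Case 2 as written does not combine: bounding $4^{m_g}\le 2^{\binom{n}{r}-2x}e^{-(y-x)/40}$ and $3.9^{m-m_g}\le 2^{\binom{n}{r}-2x-2m_g}$ and multiplying leaves an extra factor $2^{\binom{n}{r}-2x-2m_g}$ which need not be at most $1$ (it is $\ge 1$ whenever $m_g\le \frac12\binom{n}{r}-x$, which always holds). It is fixable (in that case bound $3.9^{m-m_g}\le 3.9^{y-x}$ instead), but no case split is needed at all: writing $4^{m_g}3.9^{m-m_g}=(4/3.9)^{m_g}\cdot 3.9^{m}$, both factors are increasing in their exponents, so plugging in $m_g\le\frac12\binom{n}{r}-y$ and $m\le\frac12\binom{n}{r}-x$ gives $4^{\frac12\binom{n}{r}-x}\left(\tfrac{3.9}{4}\right)^{y-x}\le 2^{\binom{n}{r}-2x}e^{-(y-x)/40}$ in one line, which is how the paper argues.
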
 
It is worth comparing the bound in \cref{eq: m m_g bound} with the trivial upper bound $e_{\psi}(N_\varphi(G)) \leq |N_{\varphi}(G)|^2 \leq 4^m$, where the last inequality follows from \cref{lemma: properties of choosable pairs}.
\begin{proof}[Proof of \cref{claim: count in terms of good pairs}]
    The second part of the lemma follows from 
    \cref{eq: m m_g bound} as follows:
    \begin{equation} \nonumber
        e_{\psi}(N_\varphi(G)) 
        \le \left(\frac{4}{3.9}\right)^{m_g} 3.9^m
        \le \left(\frac{4}{3.9}\right)^{\frac{1}{2}\binom{n}{r}-y} 3.9^{\frac{1}{2}\binom{n}{r}-x}
        = 4^{\frac{1}{2}\binom{n}{r}-x} \left(1-\frac{1}{40}\right)^{y-x}
        \le 2^{\binom{n}{r}-2x} e^{-(y-x)/40},
    \end{equation}
    where in the last inequality, we used $y \ge x$.

    We now prove the first part of the lemma. Let $(e_i,f_i)$, $i = 1,\dots,m$ be the choosable pairs for $(G,\varphi)$. Suppose without loss of generality that $(e_i,f_i)$ is a good choosable pair for $(G,\varphi,\psi)$ if and only if $i \leq m_g$. 
    For each $i \in \{m_g+1,\dots,m\}$, $(e_i,f_i)$ is a {\em bad} (i.e., not good) choosable pair, so $\psi(e_i) \neq f_i$ or $\psi(f_i) \neq e_i$. 
    If $\psi(e_i) \neq f_i$, then there are three options for $\psi(e_i)$: either $\psi(e_i) = e_i$; or $\psi(e_i)$ does not belong to any of the choosable pairs, i.e. $\psi(e_i) \notin \{e_1,f_1,\dots,e_m,f_m\}$; or $\psi(e_i) \in \{e_j,f_j\}$ for some $j \neq i$. 
    If $\psi(f_i) \neq e_i$ then the analogous statements hold for $f_i$. 
    We will refer to these three cases as types (i), (ii) and (iii), respectively.
    
    Recall that our goal is to count pairs $(G_1,G_2)\in N_\varphi(G)\times N_\varphi(G)$ with $G_1 \overset{\psi}{\rightarrow} G_2$.
    We shall upper-bound the number of choices for $(G_1,G_2)$ on each of the choosable pairs $(e_i,f_i)$. By \cref{lemma: properties of choosable pairs}, each of $G_1,G_2$ contains exactly one of the edges $e_i,f_i$. Hence, trivially, $(G_1,G_2)$ has at most 4 options on $(e_i,f_i)$ (i.e., two options for $G_1$ and two options for $G_2$). 
    To prove the lemma, we will show that on $\Omega(m - m_g)$ of the bad choosable pairs, there are less than $4$ options.

    First, suppose there are at least $(m-m_g)/6$ bad choosable pairs of type (i) or (ii).
    Fix such a pair $(e_i,f_i)$.
    We assume that $\psi(e_i) = e_i$ or $\psi(e_i) \notin \{e_1,f_1,\dots,e_m,f_m\}$. The other case, namely that $\psi(f_i) = f_i$ or $\psi(f_i) \notin \{e_1,f_1,\dots,e_m,f_m\}$, is similar. 
    We claim that it is impossible to have $e_i \in G_1, f_i \notin G_1$ and $e_i \notin G_2, f_i \in G_2$. 
    This would show that one of the 4 possibilities for $(G_1,G_2)$ on $(e_i,f_i)$ is impossible. 
    Indeed, suppose by contradiction that the above holds. 
    Then $e_i \in G_1 \setminus G_2$. Hence, $\psi(e_i) \in G_2 \setminus G_1$, as $G_1 \overset{\psi}{\rightarrow} G_2$. This rules out the case $\psi(e_i) = e_i$, so $\psi(e_i) \notin \{e_1,f_1,\dots,e_m,f_m\}$. But then, by \cref{lemma: properties of choosable pairs}, we have that $\psi(e_i) \in G_1$ if and only if $\psi(e_i) \in G_2$, so this again contradicts $\psi(e_i) \in G_2 \setminus G_1$. Summarizing, we see that on at least $(m-m_g)/6$ of the bad choosable pairs, $(G_1,G_2)$ has at most $3$ options. As explained before, on every other choosable pair $(G_1,G_2)$ has at most $4$ options. Hence, \cref{eq: m m_g bound} follows as 
    $$
    e_\psi(N_\varphi(G))
    \le 4^{m - \frac{m-m_g}{6}} \cdot 3^{\frac{m-m_g}{6}}
    = 4^{m_g} \cdot 4^{m-m_g}\!\left(\frac{3}{4}\right)^{\frac{m-m_g}{6}} 
    \le 4^{m_g} \cdot 3.9^{m-m_g}.$$

    Now suppose that there are at least $5(m-m_g)/6$ bad choosable pairs of type (iii). 
    Let $S$ be the set of indices $i \in \{m_g+1,\dots,m\}$ for which $(e_i,f_i)$ is of type (iii).
    By definition, for each $i \in S$, there exists some $j=j(i)\neq i$ such that $\psi(e_i)\in\{e_j,f_j\}$ or $\psi(f_i)\in\{e_j,f_j\}$.
    Construct an auxiliary digraph $H$ with vertex set $[m]$ where we put a directed edge $(i,j(i))$ for every $i \in S$.
    It is easy to see that $H$ has $\abs{S}$ edges (there can be two edges with the opposite direction), out-degree at most 1, and in-degree at most 2 (corresponds to $\psi^{-1}(e_i),\psi^{-1}(f_i)$).
    Notice that every edge, say $(i,j)$, is incident to at most four other edges (two in-neighbors of $i$, one in-neighbor of $j$ and one out-neighbor of $j$).
    $H$ contains a matching $M$ of size $\abs{M}\ge |S|/5\geq (m-m_g)/6$ (we can iteratively take an edge and delete all incident edges).

    Consider any $ij \in M$. Without loss of generality, let us assume that $\psi(e_i) \in \{e_j,f_j\}$ or $\psi(f_i) \in \{e_j,f_j\}$; the other case (where the roles of $i,j$ are switched) is symmetric. 
    Recall that a priori, $(G_1,G_2)$ has at most $4 \cdot 4 = 16$ options on $\{e_i,f_i,e_j,f_j\}$. 
    We will show that in fact there are at most $13$ options. 
    Suppose without loss of generality that $\psi(e_i) = f_j$; the remaining 3 cases (i.e. $\psi(e_i) = e_j$, $\psi(f_i) = e_j$, and $\psi(f_i) = f_j$) are similar. 
    Observe that out of the four choices for $(G_1,G_2)$ on $(e_i,f_i)$, if $e_i\in G_1,e_i\notin G_2$, namely $e_i\in G_1\setminus G_2$, then then $f_j = \psi(e_i) \in G_2 \setminus G_1$, which forces the choice $e_j \in G_1, f_j \notin G_1, e_j \notin G_2, f_j \in G_2$ of $(G_1,G_2)$ on $(e_j,f_j)$. Hence, 3 of the 16 choices for $(G_1,G_2)$ on $\{e_i,f_i,e_j,f_j\}$ are impossible, leaving 13 choices. 
    
    Recalling that $(G_1,G_2)$ has at most $4$ choices for every choosable pair $(e_i,f_i)$, we get the bound
    \begin{align*}
        e_{\psi}(N_\varphi(G)) 
        \le 13^{|M|}\cdot 4^{m-2|M|} 
        = 4^{m_g}\cdot 4^{m-m_g} \left(\frac{13}{16}\right)^{|M|}
        \le 4^{m_g}\cdot 4^{m-m_g}\left(\frac{13}{16}\right)^{\frac{m-m_g}{6}}
        \le 4^{m_g}\cdot 3.9^{m-m_g}.
    \end{align*}
    This completes the proof of \cref{claim: count in terms of good pairs}.
\end{proof}
In the next lemma we prove the intuitive fact that if a permutation $\psi$ is an ``almost-involution" on the edges, in the sense that almost all edges belong to a $2$-cycle of $\tilde{\psi}$, then $\psi$ is also an almost-involution on the vertices, in the sense that almost all vertices belong to a $2$-cycle of $\psi$. For the proof, let us recall the Erd\H{o}s-Ko-Rado theorem \cite{EKR}:
\begin{theorem}[Erd\H{o}s-Ko-Rado, \cite{EKR}]\label{theorem: erdos-ko-rado}
    Let $n \ge 2r$.
    If ${\mathcal {A}}$ is a family of distinct $r$-element subsets of $[n]$ such that each two subsets intersect, then $\abs{\cal A} \le \binom{n-1}{r-1}$.
\end{theorem}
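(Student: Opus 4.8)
The plan is to prove this classical bound by Katona's circular-permutation (``cyclic shifting'') double-counting argument, which yields the constant $\binom{n-1}{r-1}$ cleanly.

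\emph{Step 1 (the circle lemma).} Fix a cyclic ordering of $[n]$, say $1,2,\dots,n$ placed around a circle, and call a set of $r$ consecutive elements an \emph{arc}; for $i\in\{1,\dots,n\}$ let $A_i=\{i,i+1,\dots,i+r-1\}$ with indices read modulo $n$. I would first show: if $n\ge 2r$, then any pairwise-intersecting subfamily of $\{A_1,\dots,A_n\}$ has size at most $r$. Indeed, suppose such a subfamily contains $A_1=\{1,\dots,r\}$. An arc $A_j$ with $j\ne 1$ meets $A_1$ exactly when $j\in\{2,\dots,r\}\cup\{n-r+2,\dots,n\}$, i.e.\ for $2(r-1)$ values of $j$. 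Pairing $A_j$ with $A_{j-r}$ (indices mod $n$) for $j=2,\dots,r$ gives a bijection between $\{2,\dots,r\}$ and $\{n-r+2,\dots,n\}$, hence $r-1$ pairs, and a direct check using $n\ge 2r$ shows the two arcs of each pair are disjoint (the ``right'' arc $A_j$ occupies positions $\{2,\dots,2r-1\}$ without wrapping, while $A_{j-r}$ starts at position $n+j-r\ge n-r+2\ge r+2$ and its wrapping part is $\{1,\dots,j-1\}$, disjoint from $A_j$). A pairwise-intersecting family contains at most one arc from each of these $r-1$ disjoint pairs, plus possibly $A_1$ itself, for a total of at most $r$.

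\emph{Step 2 (double counting).} Let $\mathcal{A}$ be an intersecting family of $r$-subsets of $[n]$, and count pairs $(\sigma,A)$ where $\sigma$ is a cyclic ordering of $[n]$ and $A\in\mathcal{A}$ forms an arc in $\sigma$. On one hand there are $(n-1)!$ cyclic orderings, and within each, the members of $\mathcal{A}$ that appear as arcs are pairwise intersecting, hence number at most $r$ by Step 1; so the count is at most $r\cdot(n-1)!$. On the other hand, a fixed $r$-set forms an arc in exactly $r!\,(n-r)!$ cyclic orderings — order the $r$ elements inside the block ($r!$ ways), then cyclically arrange the block together with the remaining $n-r$ elements ($(n-r)!$ ways) — so the count equals $|\mathcal{A}|\cdot r!\,(n-r)!$. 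Comparing the two, $|\mathcal{A}|\le r(n-1)!/(r!\,(n-r)!)=(n-1)!/((r-1)!\,(n-r)!)=\binom{n-1}{r-1}$.

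\emph{The main obstacle.} The only step needing genuine care is the disjointness claim inside Step 1: one must verify that the pairing $A_j\leftrightarrow A_{j-r}$ really produces disjoint pairs, and here the hypothesis $n\ge 2r$ is used essentially — it is exactly what prevents an arc meeting $A_1$ near its left end from also reaching $A_1$ at its right end the ``long way'' around the circle. Everything else is bookkeeping about counting cyclic orderings. An alternative I would keep in mind is the original Erd\H{o}s--Ko--Rado proof via the shift/compression operators $S_{ij}$ (which preserve both $|\mathcal{A}|$ and the intersecting property, and after finitely many applications leave $\mathcal{A}$ compressed, after which a short induction on $n$ finishes); but the shifting analysis is noticeably longer, so I would go with Katona's argument.
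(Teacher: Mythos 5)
Your proof is correct: it is Katona's classical circle-method argument, and both halves of the double count are right — there are $(n-1)!$ cyclic orderings, a fixed $r$-set forms an arc in exactly $r!\,(n-r)!$ of them, and at most $r$ pairwise-intersecting arcs can occur in any one cyclic order — so $|\mathcal{A}|\le r(n-1)!/(r!\,(n-r)!)=\binom{n-1}{r-1}$ follows. There is nothing in the paper to compare this against: the statement is the Erd\H{o}s--Ko--Rado theorem, which the paper quotes with a citation and uses as a black box (in the proof of \cref{lemma: 2-cycles in psi r-graph}), so any correct proof, Katona's or the original shifting argument you mention, does the job. One small point to tidy in your Step 1: to see that the linear (non-wrapping) part of $A_{j-r}$, which starts at position $n+j-r$, misses $A_j$, the bound you quote ($n+j-r\ge n-r+2\ge r+2$) is not by itself enough once $r\ge 4$, since $A_j$ can extend up to position $2r-1$; the clean observation is $n+j-r\ge 2r+j-r=r+j>j+r-1$, which is exactly where the hypothesis $n\ge 2r$ enters, as you indicated — a trivial fix, not a gap in the approach.
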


\begin{lemma} \label{lemma: 2-cycles in psi r-graph}
    Let $r \ge 2$ and suppose that $n$ is sufficiently large in terms of $r$. 
    Let $\delta\in\mathbb{R}$ satisfy $1 \leq \delta=o(n^r)$.
    For every $\psi \in S_n$,
    if $\tilde{\psi} \in S_{\binom{[n]}{r}}$ has at least $\left(\frac{1}{2}\binom{n}{r}-\delta\right)$ 2-cycles, then $\psi$ has at least $\left(\frac{n}{2}-r\delta^{1/r}\right)$ 2-cycles.
\end{lemma}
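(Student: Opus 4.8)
The plan is to argue on the complement: call an $r$-set $e$ \emph{bad} if it does \emph{not} lie in a $2$-cycle of $\tilde\psi$. Since $\tilde\psi$ has at least $\tfrac12\binom nr-\delta$ $2$-cycles by assumption, the number of bad $r$-sets is at most $2\delta$. On the vertex side, let $A$ be the set of fixed points of $\psi$ and $L$ the set of vertices lying in a cycle of $\psi$ of length $\ge 3$; then $A\cup L$ is exactly the set of vertices \emph{not} in a $2$-cycle of $\psi$, and $\psi$ has precisely $\tfrac12(n-|A|-|L|)$ $2$-cycles. Hence it suffices to prove $|A|+|L|\le 2r\delta^{1/r}$.

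The basic dictionary I would set up first: for a permutation $\sigma$, an $r$-set $e$ is $\tilde\sigma$-fixed iff $e$ is a union of $\sigma$-cycles, and $\tilde\sigma^2(e)=e$ iff $e$ is a union of $\sigma^2$-cycles. Thus $e$ lies in a $2$-cycle of $\tilde\psi$ iff $e$ is $\psi^2$-invariant but not $\psi$-invariant, so $e$ is bad iff $e$ is $\psi$-invariant or $e$ is not $\psi^2$-invariant. This yields two supplies of bad $r$-sets. First, every $r$-subset of $A$ is $\psi$-invariant, hence bad, so $\binom{|A|}{r}\le 2\delta$; using $\binom mr\ge (m/r)^r$ (and the trivial case $|A|<r$) this gives $|A|\le r(2\delta)^{1/r}$. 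Second, for $v\in L$ let $D_v$ be the $\psi^2$-orbit of $v$; inspecting cycle lengths shows $|D_v|\ge 2$ (an odd $\psi$-cycle of length $\ell\ge 3$ is a single $\psi^2$-cycle of length $\ell$, while an even $\psi$-cycle of length $2s\ge 4$ splits into two $\psi^2$-cycles of size $s\ge 2$). Any $r$-set containing $v$ but not all of $D_v$ fails to be $\psi^2$-invariant, hence is bad; therefore $v$ lies in at least $\binom{n-1}{r-1}-\binom{n-|D_v|}{r-|D_v|}\ge\binom{n-1}{r-1}-\binom{n-2}{r-2}\ge\tfrac12\binom{n-1}{r-1}$ bad $r$-sets once $n$ is large in terms of $r$. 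Counting incidences between $L$ and the bad $r$-sets (each of which meets $L$ in at most $r$ vertices) gives $\tfrac{|L|}{2r}\binom{n-1}{r-1}\le 2\delta$, i.e. $|L|\le 4r\delta/\binom{n-1}{r-1}$.

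It remains to combine the two estimates. Since $2^{1/r}\le\sqrt2$ for $r\ge 2$, the first bound gives $|A|\le\sqrt2\,r\delta^{1/r}$. For the second, $\binom{n-1}{r-1}=\Theta_r(n^{r-1})$ and $\delta=o(n^r)$, so $\delta^{(r-1)/r}\le\tfrac{2-\sqrt2}{4}\binom{n-1}{r-1}$ for $n$ large in terms of $r$, which rearranges to $|L|\le 4r\delta/\binom{n-1}{r-1}\le(2-\sqrt2)\,r\delta^{1/r}$. Adding these, $|A|+|L|\le 2r\delta^{1/r}$, and hence $\psi$ has at least $\tfrac12(n-2r\delta^{1/r})=\tfrac n2-r\delta^{1/r}$ $2$-cycles.

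The crux — and the reason one cannot merely declare every $r$-subset of $A\cup L$ to be bad — is the vertices of $L$, especially those in even cycles: an $r$-set through such a vertex can still be good, e.g. by selecting exactly one of the two $\psi^2$-halves of a $2r$-cycle. The device that gets around this is the observation that a \emph{good} $r$-set through $v\in L$ must contain the whole orbit $D_v$, which has size at least two; this both explains why good $r$-sets through $v$ are scarce and makes the incidence count above insensitive to the detailed cycle structure on $L$. (One could phrase this last point through an Erd\H{o}s--Ko--Rado-type bound, since the good $r$-sets through $v$ all pairwise intersect inside $D_v$, but the crude count already suffices.)
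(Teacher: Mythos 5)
Your proof is correct. The overall skeleton matches the paper's: both arguments bound the number of vertices outside $2$-cycles of $\psi$ by splitting them into fixed points (your $A$, the paper's $Y$) and the rest, and both handle the fixed points identically, via $\binom{|A|}{r}\le 2\delta$. The genuine difference is in how the ``long-cycle'' vertices are treated. The paper shows that any vertex $v$ incident to more than $\binom{n-2}{r-2}$ edges lying in $2$-cycles of $\tilde\psi$ must satisfy $\psi^2(v)=v$: it invokes the Erd\H{o}s--Ko--Rado theorem to produce two such edges meeting only in $v$, and then uses that $\psi$ maps intersections to intersections. You prove the contrapositive directly: if $v$ lies in a cycle of length at least $3$, then every edge in a $2$-cycle of $\tilde\psi$ through $v$ is $\psi^2$-invariant and hence contains the whole $\psi^2$-orbit $D_v$ of $v$, which has size at least $2$, so at most $\binom{n-2}{r-2}$ such edges pass through $v$. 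This reaches the same quantitative threshold without EKR and is arguably more elementary and self-contained (your closing remark correctly identifies the EKR-flavoured alternative); what the paper's route buys is that it never needs the cycle-type case analysis ($\ell$ odd versus even) behind $|D_v|\ge 2$, since $\psi^2(v)=v$ is exactly what it extracts. The subsequent double counting of bad edges against $L$ (respectively, of non-$\mathcal{E}_2$ edges against $X$ in the paper) and the final bookkeeping giving $|A|+|L|\le 2r\delta^{1/r}$ are the same in substance, with your constants $\sqrt2\,r\delta^{1/r}$ and $(2-\sqrt2)\,r\delta^{1/r}$ playing the role of the paper's $1.5r\delta^{1/r}$ and $0.5\delta^{1/r}$, and your use of $\delta=o(n^r)$ and $\delta\ge 1$ mirrors the paper's.
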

\begin{proof}
    Let $\mc{E}_2$ denote the set of edges $e \in \binom{[n]}{r}$ which belong to some $2$-cycle of $\tilde{\psi}$. 
    Fix any vertex $v \in [n]$.
    First, we claim that if $v$ is incident to more than $\binom{n-2}{r-2}$ distinct edges in $\mc{E}_2$, then $\psi^2(v) = v$. 
    Indeed, if $\mc{E}_2(v) :=\{e\setminus\{v\}: e \in \mc{E}_2, v\in e\}$ has size larger than $\binom{n-2}{r-2}$, then by the Erd\H{o}s-Ko-Rado theorem (\cref{theorem: erdos-ko-rado}) with $[n] \setminus \{v\}$ in place of $[n]$ and $r-1$ in place of $r$, there exist distinct $e_1,e_2 \in \mc{E}_2$ such that $v \in e_1,v\in e_2$ and $(e_1\setminus\{v\})\cap(e_2\setminus\{v\})=\emptyset$, i.e. $e_1 \cap e_2 = \{v\}$.
    Putting $f_1 = \psi(e_1)$ and $f_2 = \psi(e_2)$, we know that $\psi(f_1)=e_1$ and $\psi(f_2)=e_2$, because $(e_1,f_1),(e_2,f_2)$ are $2$-cycles of $\tilde{\psi}$ (as $e_1,e_2 \in \mc{E}_2$).
    Since $\psi$ is a bijection, 
    it holds that $\psi(e_1\cap e_2)=f_1 \cap f_2$ and $\psi(f_1 \cap f_2)=e_1 \cap e_2$.
    Since $e \cap e' = \{v\}$, we get that $\psi^2(v) = v$, as claimed. This means that $v$ lies in a 1-cycle or a 2-cycle of $\psi$.

    Let $X$ be the set of vertices $v \in [n]$ which are incident to {\bf at most $\binom{n-2}{r-2}$} edge from $\mathcal{E}_2$. 
    We showed above that every vertex not in $X$ lies in a 1-cycle or a 2-cycle of $\psi$. 
    Let $x := |X|$.
    For each vertex $v \in X$, there are at least $\binom{n-1}{r-1}-\binom{n-2}{r-2}=\binom{n-2}{r-1}$ edges $e \in \binom{[n]}{r}$ with $v \in e$ and $e \notin \mathcal{E}_2$.
    It follows that $\binom{n}{r} - |\mathcal{E}_2| \geq \frac{1}{r} x \binom{n-2}{r-1}$,
    because each edge is counted at most $r$ times.
    By assumption, $\tilde{\psi}$ has at least $\left(\frac{1}{2}\binom{n}{r}-\delta\right)$ 2-cycles, so $|\mathcal{E}_2| \geq \binom{n}{r} - 2\delta$. 
    Thus, 
    $\frac{1}{r} x\binom{n-2}{r-1} \leq 2\delta$, giving
    $x \le 2r\delta/\binom{n-2}{r-1}\le 0.5\delta^{1/r}$ using that $\delta=o(n^{r})$.

    Let $Y$ be the set of fixed points of $\psi$, and set $y:=\abs{Y}$.
    If $e \in \binom{[n]}{r}$ is contained in $Y$ then $\psi(e) = e$, so $e$ cannot be in a $2$-cycle of $\tilde{\psi}$, i.e. $e \notin \mathcal{E}_2$. 
    It follows that $|\mathcal{E}_2| \leq \binom{n}{r} - \binom{y}{r}$, and hence $2\delta \geq \binom{y}{r}$. So either $y < r$, or $y \geq r$ and then $2\delta\geq\binom{y}{r}\geq(y/r)^r$.
    In any case, $y\le r(2\delta)^{1/r} \leq 1.5r\delta^{1/r}$, using $r \geq 2$.
    
    By definition of $X$ and $Y$, every vertex in $[n] \setminus (X \cup Y)$
    lies in a 2-cycle of $\psi$. There are  $n - x - y\ge n - 2r\delta^{1/r}$ such vertices. 
    Hence, the number of 2-cycles in $\psi$ is at least $\frac{n}{2}-r\delta^{1/r}$.
\end{proof}

Note that good choosable pairs are closely related to the 2-cycles of $\tilde{\psi}$ (and thus $\psi$).
We further prove that there will be few good choosable pairs unless $\varphi$ contains most 2-cycles of $\psi$.
\begin{lemma}\label{lemma: few half-two-cycles r-graph}
    Let $G$ be an $r$-graph and $\varphi,\psi \in S_n$. Let $t$ be the number of 2-cycles of $\psi$ which are not 2-cycles of $\varphi$. Then 
    $m_g \le \frac{1}{2}\binom{n}{r}-\binom{t}{r}$, where $m_g$ is the number of good choosable pairs of $(G,\varphi,\psi)$.
\end{lemma}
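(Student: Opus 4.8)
The plan is to first reduce the bound on $m_g$ to a purely combinatorial statement about $\varphi$ and $\psi$ (forgetting $G$), and then to exploit the $t$ bad $2$-cycles of $\psi$ to produce $2\binom{t}{r}$ edges that can never occur in a good choosable pair. For the reduction: if $(e,f)$ is a good choosable pair for $(G,\varphi,\psi)$ then $\psi(e)=f$, $\psi(f)=e$ and $\varphi(e)=f$, so $\{e,f\}$ is a $2$-cycle of $\psi$ acting on $\binom{[n]}{r}$ which is \emph{$\varphi$-compatible}, meaning $\varphi$ sends one of $e,f$ to the other. Distinct good choosable pairs give distinct such $2$-cycles, since we cannot have both $(e,f)$ and $(f,e)$ good (that would force $e\in G$ and $e\notin G$). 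Hence $m_g\le\abs{P}$, where $P$ is the set of $\varphi$-compatible $2$-cycles of $\psi$ on $\binom{[n]}{r}$. Now let $R'$ be the set of edges $e$ for which $\{e,\psi(e)\}$ is \emph{not} a $\varphi$-compatible $2$-cycle of $\psi$ — equivalently, $\psi(e)=e$, or $e$ lies on a $\psi$-cycle of length $\ge 3$, or $e$ lies on a $2$-cycle of $\psi$ outside $P$. Since every edge is fixed by $\psi$, or lies on a $2$-cycle, or on a longer cycle, a direct count gives $2\abs{P}+\abs{R'}=\binom{n}{r}$, so $m_g\le\abs{P}=\tfrac12\binom{n}{r}-\tfrac12\abs{R'}$. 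It therefore suffices to prove $\abs{R'}\ge 2\binom{t}{r}$; we may assume $t\ge r$, since otherwise $\binom{t}{r}=0$ and there is nothing to prove.

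To lower-bound $\abs{R'}$, consider the $t$ bad $2$-cycles of $\psi$. Since $\{a,b\}$ being a $2$-cycle of $\varphi$ means $\varphi(a)=b$ and $\varphi(b)=a$, we may name the vertices of each bad $2$-cycle $a_i,b_i$ so that $\varphi(a_i)\ne b_i$. Put $A=\{a_1,\dots,a_t\}$, $B=\{b_1,\dots,b_t\}$, $D=A\cup B$, so $\abs{D}=2t$ and, since $\psi^2$ fixes every vertex of $D$, the permutation $\psi$ acts as an involution on $\binom{D}{r}$. For $S\in\binom{D}{r}$, if $S\notin R'$ then $\psi(S)\ne S$ and $\{S,\psi(S)\}\in P$, i.e. $\varphi(S)=\psi(S)$ or $\varphi(\psi(S))=S$; writing $\eta=\psi^{-1}\varphi$ and $\zeta=\varphi\psi$ (as permutations of $[n]$, acting on edges), this says $\eta(S)=S$ or $\zeta(S)=S$. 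The key point is that $\eta$ fixes no $a_i$ (else $\varphi(a_i)=\psi(a_i)=b_i$) and $\zeta$ fixes no $b_i$ (else $\varphi(a_i)=b_i$). An $r$-set $S\subseteq D$ fixed by $\eta$ is a union of $\eta$-cycles lying inside $D$, and every such cycle that meets $A$ has length $\ge 2$; a short counting estimate then bounds the number of such $S$, and likewise the number fixed by $\zeta$, by $\binom{t}{r}+O_r(t^{r-1})$ (and, when $r=2$, by exactly $t^2/2$, since the fixed points of $\eta$ in $D$ all lie in $B$ — at most $t$ of them — and the remaining $\eta$-cycles in $D$ are disjoint of length $\ge 2$). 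Consequently the number of $S\in\binom{D}{r}$ outside $R'$ is at most $2\binom{t}{r}+O_r(t^{r-1})$ (at most $t^2$ when $r=2$), and since $\binom{2t}{r}\ge 2^r\binom{t}{r}$ we get
$$\abs{R'}\ \ge\ \binom{2t}{r}-2\binom{t}{r}-O_r(t^{r-1})\ \ge\ 2\binom{t}{r}$$
once $t$ (hence $n$) is large; for $r=2$ this reads $\abs{R'}\ge(2t^2-t)-t^2=t^2-t=2\binom{t}{2}$ exactly.

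The main obstacle is the last counting estimate. For $r=2$ it must be carried out exactly, and the cancellation $\binom{2t}{2}-t^2=t^2-t$ shows the lemma is tight in this case; for $r\ge 3$ there is slack ($2^r-2\ge 6$), but one still needs a clean generating-function bound on the number of $r$-subsets of $D$ that are unions of cycles of a permutation having no fixed point on $A$ — here one uses that the relevant factor $\prod_{\lvert c\rvert\ge 2}(1+x^{\lvert c\rvert})$ has no linear term — and one must treat the bounded range $r\le t=O_r(1)$ by a separate finite argument (or by noting that in that regime $\psi$ itself already has many fixed edges, e.g. $\binom{\lfloor n/2\rfloor}{\lfloor r/2\rfloor}$ of them when $r$ is even, so that $\abs{R'}$ is large for a trivial reason). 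The remaining parts — the reduction to $\abs{R'}\ge 2\binom{t}{r}$, the relabelling giving $\varphi(a_i)\ne b_i$, and the observation that $\eta$ and $\zeta$ then fix no $a_i$, resp. no $b_i$ — are routine.
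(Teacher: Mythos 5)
Your reduction to the $G$-free statement $m_g \le |P| = \tfrac12\bigl(\binom{n}{r} - |R'|\bigr)$ is correct, and your argument is complete and exact in the case $r=2$ (the bound of $t^2/2$ fixed $2$-subsets of $D$ for each of $\eta,\zeta$ and the cancellation $\binom{2t}{2}-t^2 = 2\binom{t}{2}$ both check out, for every $t$). For $r\ge 3$, however, there is a genuine gap. Your count of $r$-subsets of $D$ fixed by $\eta$ (or $\zeta$) is only $\binom{t}{r}+O_r(t^{r-1})$ with an unspecified constant, so the chain $|R'| \ge \binom{2t}{r}-2\binom{t}{r}-O_r(t^{r-1}) \ge 2\binom{t}{r}$ is only valid once $t \ge t_0(r)$, whereas the lemma is stated for every $t$. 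You acknowledge this, but the patch you sketch for the range $r \le t \le t_0(r)$ does not work: you cannot assert that $\tilde{\psi}$ has many fixed edges (e.g.\ $\binom{\floor{n/2}}{\floor{r/2}}$ of them), because $\psi$ is an arbitrary permutation here, not an involution. Worse, in the genuinely hard instance of this regime $\psi$ \emph{is} a fixed-point-free involution with only $t$ of its $2$-cycles bad; then for odd $r$ the map $\tilde{\psi}$ has no fixed edges at all, every edge lies on a $2$-cycle of $\tilde{\psi}$, and $R'$ consists exactly of the edges on non-$\varphi$-compatible $2$-cycles, so nothing is ``trivially large''. Closing the bounded-$t$ case would require an actual bound, uniform in $r$, on the number of $r$-subsets of $D$ that are unions of cycles of a permutation with no fixed point in $A$ (or a separate argument), and this is precisely what is missing.

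For comparison, the paper's proof avoids asymptotics entirely: for each choice of $r$ bad $2$-cycles $u_1v_1,\dots,u_rv_r$ it shows directly that at least two \emph{transversal} edges (edges meeting each $\{u_i,v_i\}$ in exactly one vertex) lie in no good choosable pair, and the argument only uses the condition ``$\varphi(e)=\psi(e)$ or $\varphi(\psi(e))=e$'', so these edges in fact land in your $R'$. Since a transversal edge determines its $r$-tuple of bad $2$-cycles, summing over the $\binom{t}{r}$ choices gives $2m_g \le \binom{n}{r}-2\binom{t}{r}$ exactly, for every $t$ and with no largeness assumptions. Your global count inside $\binom{D}{r}$ is an appealing alternative and, for large $t$, even yields about $(2^r-2)\binom{t}{r}$ edges of $R'$ rather than $2\binom{t}{r}$; but as written it does not prove the lemma as stated.
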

\begin{proof}
    Let $u_1v_1,u_2v_2,\dots,u_rv_r$ be any $r$ distinct 2-cycles of $\psi$ which are not 2-cycles of $\varphi$.
    An edge $e \in \binom{[n]}{r}$ is called a {\em transversal} if it contains exactly one of $u_i,v_i$ for every $i\in\{1,\dots,r\}$.
    It suffices to show that there are at least two transversal edges $e,f$ which are not contained in any good choosable pair for $(G,\varphi,\psi)$.
    Indeed, we can then sum over all $\binom{t}{r}$ choices for $u_1v_1,u_2v_2,\dots,u_rv_r$ to get at least
    $2\binom{t}{r}$ edges which are not contained in any good choosable pair. This would imply that $m_g \leq \frac{1}{2}\binom{n}{r}-\binom{t}{r}$.

    Suppose for contradiction that all transversal edges (of $u_1v_1,u_2v_2,\dots,u_rv_r$) but at most one are contained in good choosable pairs.
    Observe that if edge $e$ is transversal, then $\psi(e)$ is also transversal, and $e,\psi(e)$ form a 2-cycle of $\tilde{\psi}$.
    By the definition of good choosable pairs (see \cref{def: good choosable pair}), if $e$ lies in some good choosable pair then this pair must be 
    $\{e,\psi(e)\}$. 
    Thus, our assumption implies that every transversal edge $e$ belongs to a good choosable pair, namely the pair $\{e,\psi(e)\}$.
    
    We first claim that for every distinct $i,j\in \{1,\dots,r\}$, $\varphi(u_i)\neq v_j$ and $\varphi(v_i)\neq u_j$.
    Indeed, suppose that $\varphi(u_i)=v_j$ for some $i \neq j$ (the case $\varphi(v_i)=u_j$ is similar).
    Let $e=\{u_k:k\neq i,j\}\cup\{u_i,v_j\}$ and $f=\psi(e)=\{v_k:k\neq i,j\}\cup\{v_i,u_j\}$.
    As $e,f$ form a good choosable pair, we have $\varphi(e)=f$ or $\varphi(f)=e$ (see \cref{def: choosable pairs}).
    But $u_i \in e$, $v_j = \varphi(u_i) \notin f$, meaning that $\varphi(e) \neq f$. Similarly, $v_j \in e$ but $u_i = \varphi^{-1}(v_j) \notin f$, so $e \neq \varphi(f)$. In either case, we got a contradiction. 
    
    Now, write $U=\{u_1,\dots,u_r\}$ and $V=\{v_1,\dots,v_r\}$.
    As the edges $u_1\dots u_r$ and $v_1\dots v_r$ form a good choosable pair, it holds that $\varphi(u_1\dots u_r)=v_1\dots v_r$ or $\varphi(v_1\dots v_r)=u_1\dots u_r$, i.e. $\varphi(U)=V$ or $\varphi(V)=U$.
    Without loss of generality, assume that $\varphi(U)=V$.
    For every $i \in [r]$, we showed above that $\varphi(u_i) \neq v_j$ for every $j \neq i$. As $\varphi(u_i) \in V$, we must have
    $\varphi(u_i)=v_i$.
    By assumption, $\varphi(v_i)\neq u_i$ (otherwise $u_iv_i$ would form a common 2-cycle in $\varphi$ and $\psi$, in contradiction to the choice of $u_iv_i$). 
    We also showed that $\varphi(v_i) \neq u_j$ for every $j \neq i$, so $\varphi(v_j) \notin U$.
    It follows that $\varphi(V)\cap U=\emptyset$.
    Now, consider the edges $e=\{u_1,v_2,\dots,v_r\}$ and $f=\psi(e)=\{v_1,u_2,\dots,u_r\}$.
    By assumption $\{e,f\}$ forms a good choosable pair, which implies that $\varphi(e)=f$ or $\varphi(f)=e$.
    In the former case, as $\varphi(u_1)=v_1$, we must have $\varphi(v_2,\dots,v_r)=u_2\dots u_r$; in the latter case, as $\varphi(u_2\dots u_r)=v_2\dots v_r$, we must have $\varphi(v_1)=u_1$.
    In either case, $\varphi(V) \cap U\neq\emptyset$, giving a contradiction.
    We, therefore, conclude at least two transversal edges do not lie in any good choosable pair for $(G,\varphi,\psi)$, as required. 
\end{proof}

We now prove the main lemmas of this section, namely \cref{lemma: counts of normal phi r-graph}, \cref{lemma: general counts for exceptional r-graph} and \cref{lemma: counts for exceptions r-graph}. 
These lemmas establish upper bounds on $e_\psi(N_\varphi(G))$ in various cases. Our goal is to show that $e_\psi(N_\varphi(G))$ is significantly smaller than $2^{2f_r(n)}$, which is the number pairs in the extremal example (see \cref{prop:extremal construction}).
In the following lemma we show that $e_\psi(N_\varphi(G)))$ is small unless $\varphi,\psi$ are close to each other and consist mostly of 2-cycles.

\begin{lemma}\label{lemma: counts of normal phi r-graph}
    Let $r \ge 2$ and $n$ be sufficiently large in terms of $r$.
    Let $G$ be a graph and $\varphi,\psi \in S_n$.
    For every $\delta \ge n^{r/2}$ with $\delta=o(n^{r})$, it holds that $e_{\psi}(N_{\varphi}(G)) \leq 2^{2f_r(n)} \cdot e^{-\delta}$ unless $\psi$ and $\varphi$ share at least $\left(\frac{n}{2}-18r{\delta}^{1/r}\right)$ 2-cycles.
\end{lemma}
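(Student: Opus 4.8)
The plan is to combine the three structural lemmas established above. Let $m$ denote the number of choosable pairs of $(G,\varphi)$ and $m_g$ the number of good choosable pairs of $(G,\varphi,\psi)$. We want to apply \cref{eq: practical m m_g bound} from \cref{claim: count in terms of good pairs}, so we first need upper bounds of the form $m \le \frac12\binom{n}{r}-x$ and $m_g \le \frac12\binom{n}{r}-y$ with $y \ge x \ge 0$ and $y-x = \Omega(\delta)$. Once we have these, \cref{eq: practical m m_g bound} gives $e_\psi(N_\varphi(G)) \le 2^{\binom{n}{r}-2x}e^{-(y-x)/40} \le 2^{\binom{n}{r}}e^{-\Omega(\delta)}$, and since $2f_r(n) \ge \binom{n}{r} - \binom{\lfloor n/2\rfloor}{\lfloor r/2\rfloor} \ge \binom{n}{r} - O(n^{r/2}) \ge \binom{n}{r} - \delta$ (using $\delta \ge n^{r/2}$ and $n$ large, possibly after adjusting the constant in the exponent and using $\delta = o(n^r)$ to absorb lower-order terms), we obtain $e_\psi(N_\varphi(G)) \le 2^{2f_r(n)}e^{-\delta'}$ for an appropriate $\delta' $; rescaling $\delta$ by a constant at the outset handles the loss.

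The heart of the matter is to establish the two bounds on $m$ and $m_g$ under the assumption that $\varphi,\psi$ do \emph{not} share at least $\frac n2 - 18r\delta^{1/r}$ 2-cycles, i.e., the number $s$ of common 2-cycles of $\varphi$ and $\psi$ satisfies $s < \frac n2 - 18r\delta^{1/r}$. I would split into two cases according to whether $\psi$ is ``close to an involution on edges'' or not. Quantitatively: let $c$ be the number of 2-cycles of $\tilde\psi$. If $c < \frac12\binom{n}{r} - \delta$, then since $e_\psi(N_\varphi(G))$ only counts pairs $G_1 \overset{\psi}{\rightarrow} G_2$ and, more to the point, $m_g \le c$ always (good choosable pairs sit inside 2-cycles of $\tilde\psi$, as noted before \cref{lemma: few half-two-cycles r-graph}), we already get $m_g \le \frac12\binom{n}{r}-\delta$; combined with the trivial $m \le \frac12\binom{n}{r}$ (every edge lies in at most one choosable pair, and choosable pairs use two edges each, so $2m \le \binom{n}{r}$) we may take $x=0$, $y=\delta$ in \cref{eq: practical m m_g bound} and conclude.

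The remaining case is $c \ge \frac12\binom{n}{r}-\delta$, i.e. $\tilde\psi$ has at least $\frac12\binom{n}{r}-\delta$ 2-cycles. Now \cref{lemma: 2-cycles in psi r-graph} applies (note $1 \le \delta = o(n^r)$) and tells us $\psi$ itself has at least $\frac n2 - r\delta^{1/r}$ 2-cycles. Let $t$ be the number of 2-cycles of $\psi$ that are \emph{not} 2-cycles of $\varphi$; since $\psi$ has $\ge \frac n2 - r\delta^{1/r}$ 2-cycles and shares only $s < \frac n2 - 18r\delta^{1/r}$ of them with $\varphi$, we get $t > (\frac n2 - r\delta^{1/r}) - (\frac n2 - 18r\delta^{1/r}) = 17r\delta^{1/r}$, so in particular $t \ge r$ and $\binom{t}{r} \ge (t/r)^r > (17\delta^{1/r})^r = 17^r\delta \ge \delta$. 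Then \cref{lemma: few half-two-cycles r-graph} yields $m_g \le \frac12\binom{n}{r} - \binom{t}{r} \le \frac12\binom{n}{r} - \delta$. Again using the trivial $m \le \frac12\binom{n}{r}$, we apply \cref{eq: practical m m_g bound} with $x=0$, $y=\delta$ to finish. So in every case $e_\psi(N_\varphi(G)) \le 2^{\binom{n}{r}}e^{-\delta/40}$, and converting to the $2^{2f_r(n)}$ normalization as in the first paragraph completes the proof.

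I expect the only real subtlety — not a true obstacle, but the place requiring care — to be the bookkeeping of constants: matching the $18r\delta^{1/r}$ gap in the statement to the $r\delta^{1/r}$ loss coming out of \cref{lemma: 2-cycles in psi r-graph} and the $\binom{t}{r} \ge \delta$ requirement coming out of \cref{lemma: few half-two-cycles r-graph}, while also paying the additive $\binom{\lfloor n/2\rfloor}{\lfloor r/2\rfloor} = O(n^{r/2}) \le \delta$ to pass between $\binom{n}{r}$ and $2f_r(n)$, and the $e^{-\delta/40}$ versus $e^{-\delta}$ discrepancy. All of these are comfortably absorbed by the generous factor $18$ in the hypothesis and by replacing $\delta$ with a suitable constant multiple of itself at the very start of the argument, together with the standing assumption that $n$ is large in terms of $r$.
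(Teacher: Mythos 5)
Your argument is correct and is essentially the paper's proof: the same three ingredients (the $4^{m_g}\cdot 3.9^{m-m_g}$ bound via \cref{eq: practical m m_g bound}, \cref{lemma: 2-cycles in psi r-graph}, and \cref{lemma: few half-two-cycles r-graph}) combined in the same way, just organized as a contrapositive with a case split on the number of 2-cycles of $\tilde{\psi}$ rather than the paper's contradiction argument, which runs the dichotomy on $m_g$ instead. The constant rescaling you flag is exactly how the paper handles it — it works with the threshold $m_g>\frac{1}{2}\binom{n}{r}-80\delta$ so that the $e^{-y/40}$ loss and the $2^{n^{r/2}}\le e^{\delta}$ conversion between $2^{\binom{n}{r}}$ and $2^{2f_r(n)}$ are absorbed, and the slack in the factor $18$ (via $r(80\delta)^{1/r}\le 9r\delta^{1/r}$) closes the books just as you anticipate.
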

\begin{proof}
    Suppose that $e_\psi(N_\varphi(G)) > 2^{2f_r(n)}\cdot e^{-\delta}$.
    Denote by $m$ the number of choosable pairs for $(G,\varphi)$ and by $m_g$ the number of good choosable pairs for $(G,\varphi,\psi)$.
    We claim that $m_g > \frac{1}{2}\binom{n}{r}-80\delta$.
    Suppose otherwise. Note that $f_r(n) \geq \frac{1}{2}(\binom{n}{r} - n^{r/2})$. 
    Now, as $m_g \le \frac{1}{2}\binom{n}{r} - 80\delta$ (by assumption) and $m \le \frac{1}{2}\binom{n}{r}$ (trivially), we can apply \cref{eq: practical m m_g bound} with $x=0$ and $y = 80\delta$ to get
    \begin{equation*} \label{eq: edge count is small}
        \begin{aligned}
            e_\psi(N_\varphi(G))
            \leq 
            2^{\binom{n}{r}} e^{-2\delta} \leq 
            2^{2f_r(n) + n^{r/2}} \cdot e^{-2\delta} \leq  
            2^{2f_r(n)} \cdot e^{-\delta},
        \end{aligned}
    \end{equation*}
    using that $\delta \geq n^{r/2}$. This contradicts our assumption.

    So indeed $m_g > \frac{1}{2}\binom{n}{r}-80\delta$.
    Recall that the two edges of every good choosable pair form a 2-cycle of $\tilde{\psi}$ (see \cref{def: good choosable pair}).
    This means $\tilde{\psi}$ has at least $\left(\frac{1}{2}\binom{n}{r}-80\delta\right)$ 2-cycles. 
    Therefore, by \cref{lemma: 2-cycles in psi r-graph} (with $80\delta$ in place of $\delta$), the number of 2-cycles of $\psi$ is at least $\frac{n}{2}-r(80\delta)^{1/r}\ge \frac{n}{2}-9r\delta^{1/r}$ (using that $r \geq 2$).
    Let $t$ be the number of 2-cycles of $\psi$ that are not 2-cycles of $\varphi$.
    By \cref{lemma: few half-two-cycles r-graph}, it holds that $\frac{1}{2}\binom{n}{r}-\binom{t}{r} \ge m_g > \frac{1}{2}\binom{n}{r}-80\delta$.
    This means $t \le r$ or $80\delta>\binom{t}{r}\ge(t/r)^r$, indicating $t<r(80\delta)^{1/r}<9r\delta^{1/r}$.
    Therefore, the number of common 2-cycles of $\psi$ and $\varphi$ is at least $\frac{n}{2}-9r\delta^{1/r}-t > \frac{n}{2}-18r\delta^{1/r}$, as desired.
\end{proof}
\cref{lemma: counts of normal phi r-graph} allows us to count, for given $G,\varphi,\psi$, the number of pairs of graphs $G_1,G_2$ with $G \overset{\varphi}{\rightarrow} G_1,G_2$ and $G_1 \overset{\psi}{\rightarrow} G_2$. 
In some situations, we want to count the number of graphs $G$ such that $G_1 \overset{\varphi_1}{\rightarrow} G$, $G_2 \overset{\varphi_2}{\rightarrow} G$, for given $G_1,G_2$ and $\varphi_1,\varphi_2$. 
When working inside a difference-isomorphic family $\mathcal{G}$, this can be done by counting pairs of such graphs $G$, say $G,G'$, using the fact that for every such $G,G'$ there exists some $\psi \in S_n$ with $G \overset{\psi}{\rightarrow} G'$ ($\mathcal{G}$ is difference-isomorphic), so we can use \cref{lemma: counts of normal phi r-graph}, and then union-bounding over all $\psi$. This is done in the following lemma.


\begin{lemma}\label{lemma: codegree is usually small}
    Suppose that $n$ is large enough in terms of $r$.
    Let $\cal G$ be a difference-isomorphic family of $r$-graphs on $[n]$, and let $G_1,G_2 \in \cal G$ and $\varphi_1,\varphi_2 \in S_n$. For every $\delta \geq \max\{n \ln{n}, n^{r/2}\}$ with $\delta = o(n^r)$, we have $\abs{N_{\varphi_1}(G_1)\cap N_{\varphi_2}(G_2)\cap \mathcal{G}} \leq 2^{f_r(n)}\cdot e^{-\delta}$ unless $\varphi_1$ and $\varphi_2$ share at least $\left(\frac{n}{2} - 80r\delta^{1/r} \right)$ 2-cycles.
\end{lemma}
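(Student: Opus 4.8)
The plan is to bound the set $Z := N_{\varphi_1}(G_1)\cap N_{\varphi_2}(G_2)\cap\mathcal{G}$. Since $Z$ is a subfamily of the difference-isomorphic family $\mathcal{G}$, it is itself difference-isomorphic, so for every ordered pair $(G,G')\in Z\times Z$ there is some $\psi\in S_n$ with $G\overset{\psi}{\rightarrow}G'$; hence $|Z|^2\le\sum_{\psi\in S_n}e_\psi(Z)$. The crux is to show that, assuming $\varphi_1$ and $\varphi_2$ share fewer than $\left(\tfrac{n}{2}-80r\delta^{1/r}\right)$ $2$-cycles, one has $e_\psi(Z)\le 2^{2f_r(n)}e^{-3\delta}$ for \emph{every} $\psi\in S_n$.

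To prove this uniform bound I would use $Z\subseteq N_{\varphi_1}(G_1)$ and $Z\subseteq N_{\varphi_2}(G_2)$, so $e_\psi(Z)\le\min\{e_\psi(N_{\varphi_1}(G_1)),\,e_\psi(N_{\varphi_2}(G_2))\}$, and then invoke \cref{lemma: counts of normal phi r-graph} twice with the parameter $3\delta$ in place of $\delta$ (valid since $3\delta\ge n^{r/2}$ and $3\delta=o(n^r)$). If $\psi$ shares fewer than $\left(\tfrac{n}{2}-18r(3\delta)^{1/r}\right)$ $2$-cycles with $\varphi_1$, then $e_\psi(N_{\varphi_1}(G_1))\le 2^{2f_r(n)}e^{-3\delta}$ and we are done; symmetrically if $\psi$ shares few $2$-cycles with $\varphi_2$. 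In the remaining case $\psi$ shares at least $\left(\tfrac{n}{2}-18r(3\delta)^{1/r}\right)$ $2$-cycles with each of $\varphi_1,\varphi_2$; since $\psi$ has at most $\lfloor n/2\rfloor$ $2$-cycles in total, inclusion--exclusion forces $\varphi_1$ and $\varphi_2$ to share at least $\tfrac{n}{2}-36r(3\delta)^{1/r}$ $2$-cycles, and the bound $36\cdot 3^{1/r}\le 36\sqrt{3}<80$ for $r\ge 2$ makes this quantity larger than $\tfrac{n}{2}-80r\delta^{1/r}$, contradicting the hypothesis. This establishes the uniform bound on $e_\psi(Z)$.

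Finally I would sum over all $\psi\in S_n$ and absorb the $n!$ loss using $\delta\ge n\ln n$:
\[
|Z|^2\le\sum_{\psi\in S_n}e_\psi(Z)\le n!\cdot 2^{2f_r(n)}e^{-3\delta}\le e^{n\ln n}\cdot 2^{2f_r(n)}e^{-3\delta}\le 2^{2f_r(n)}e^{-2\delta},
\]
so that $|Z|\le 2^{f_r(n)}e^{-\delta}$, which is exactly the claim.

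I expect the only genuinely delicate point to be the choice of the inflation factor (here $3$) applied to $\delta$ when calling \cref{lemma: counts of normal phi r-graph}: it must be at least $3$ so that after multiplying by $n!\le e^{\delta}$ and taking a square root one retains a factor $e^{-\delta}$, but it cannot be so large that the induced error $36r(3\delta)^{1/r}$ in the shared-$2$-cycle count exceeds the allotted slack $80r\delta^{1/r}$ --- this is precisely where the numerical constants and the restriction $r\ge 2$ enter. Everything else is a direct application of the earlier lemmas together with the difference-isomorphic counting inequality $\sum_{\psi}e_\psi(Z)\ge|Z|^2$.
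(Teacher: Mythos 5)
Your proof is correct and follows essentially the same route as the paper's: both reduce to \cref{lemma: counts of normal phi r-graph} applied with $3\delta$, absorb the $n!$ factor via $\delta \ge n\ln n$, and combine the two shared-$2$-cycle bounds through the cap of $\lfloor n/2\rfloor$ $2$-cycles of $\psi$. The only cosmetic difference is that the paper argues contrapositively by averaging (picking one $\psi$ with $e_\psi(\mathcal{G}') \ge |\mathcal{G}'|^2/n!$), whereas you bound $e_\psi(Z)$ uniformly over all $\psi$ and then sum; the two formulations are equivalent and your constants ($36\sqrt{3} < 80$) check out.
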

\begin{proof}
    Let $\mathcal{G}':=N_{\varphi_1}(G_1)\cap N_{\varphi_2}(G_2)\cap \mathcal{G}$. Suppose that $|\mathcal{G}'| > 2^{f_r(n)} \cdot e^{-\delta}$.
    Since $\mathcal{G}$ is difference-isomorphic, we have $\sum_{\psi\in S_n}e_\psi(\mathcal{G}')\ge \abs{\mathcal{G}'}^2$.
    By averaging, there exists $\psi \in S_n$ such that $e_\psi(\mathcal{G}') \ge \abs{\mathcal{G}'}^2/n!$.
    When $n$ is sufficiently large, $e_\psi(N_{\varphi_1}(G_1)) \ge e_\psi(\mathcal{G}')\ge \abs{\mathcal{G}'}^2/n! > 2^{2f_r(n)}\cdot e^{-2\delta}/n! \geq 2^{2f_r(n)}\cdot e^{-3\delta}$, using that $\delta \geq n\ln{n}$.
    Now, by \cref{lemma: counts of normal phi r-graph} with $G=G_1$, $\varphi=\varphi_1$ and with $3\delta$ in place of $\delta$, we conclude that $\varphi_1$ and $\psi$ share at least 
    $\left(\frac{n}{2} - 18r(3\delta)^{1/r} \right) \geq \left(\frac{n}{2} - 40r\delta^{1/r} \right)$  
    $2$-cycles. 
    By the same argument for $\varphi_2$, we get that $\varphi_2$ and $\psi$ also share at least 
    $\left(\frac{n}{2} - 40r\delta^{1/r} \right)$
    2-cycles.
    It follows that $\varphi_1$ and $\varphi_2$ share at least $\left(\frac{n}{2} - 80r\delta^{1/r} \right)$ $2$-cycles.
\end{proof}

In the following lemma, we show that $e_\psi(N_\varphi(G))$ is much smaller than $2^{2f_r(n)}$ unless $\varphi,\psi$ have some very restrictive structure.
\begin{lemma}\label{lemma: general counts for exceptional r-graph}
    There exists an absolute constant $c>0$ such that the following holds.
    Let $r \ge 2$ and $n$ be sufficiently large in terms of $r$.
    Let $G$ be an $r$-graph and let $\varphi,\psi \in S_n$.
    Then $e_{\psi}(N_{\varphi}(G)) \leq 2^{2f_r(n)} \cdot e^{-\binom{n}{r-1}/100}$ unless the following holds.
    \begin{enumerate}
        \item[(1)] $\psi$ is an involution with at most $\left(2c n^{1-\frac{1}{r}}\right)$ 1-cycles (fixed points).
        \item[(2)] $\varphi,\psi$ share at least $\left(\frac{n}{2}-c n^{1-\frac{1}{r}}\right)$ 2-cycles and all the 1-cycles.
        \item[(3)] For every 2-cycle $xy$ of $\psi$, it holds that $\varphi(x) = y$ or $\varphi(y) = x$.
    \end{enumerate}
    In particular, if $\varphi,\psi$ are both involutions, then $e_{\psi}(N_{\varphi}(G)) \leq 2^{2f_r(n)} \cdot e^{-\binom{n}{r-1}/100}$ unless $\varphi=\psi$.
\end{lemma}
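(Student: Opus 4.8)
The ``in particular'' is a consequence of (1)--(3): if $\varphi,\psi$ are involutions with $\varphi\neq\psi$, then (since an involution is determined by its $2$‑cycles and fixed points) either some $2$‑cycle $xy$ of $\psi$ fails to be a $2$‑cycle of $\varphi$, in which case $\varphi(x)\neq y$ and $\varphi(y)\neq x$ (as $\varphi$ is an involution) and (3) fails; or some fixed point of $\psi$ is moved by $\varphi$, and the ``$1$‑cycles'' clause of (2) fails. So it suffices to prove the first assertion. Assume $e_\psi(N_\varphi(G))>2^{2f_r(n)}e^{-\binom{n}{r-1}/100}$ and let $m,m_g$ be the numbers of choosable pairs of $(G,\varphi)$ and good choosable pairs of $(G,\varphi,\psi)$ (as in \cref{claim: count in terms of good pairs}). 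The whole argument is driven by the \emph{deficiency} $D:=\tfrac12\binom nr-m_g$, which I will show is $O\!\big(\binom{n}{r-1}\big)$; recall that the two edges of a good choosable pair form a $2$‑cycle of $\tilde\psi$, so the set $\mc{E}_2(\psi)$ of edges lying in a $2$‑cycle of $\tilde\psi$ has size $\ge 2m_g=\binom nr-2D$.

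\textbf{Sharing $2$-cycles and bounding $D$.} First apply \cref{lemma: counts of normal phi r-graph} with $\delta:=\max\{\binom{n}{r-1}/100,\,n^{r/2}\}$; this satisfies $n^{r/2}\le\delta=o(n^r)$ and $e^{-\delta}\le e^{-\binom{n}{r-1}/100}$, so our assumption rules out $e_\psi(N_\varphi(G))\le 2^{2f_r(n)}e^{-\delta}$, and hence $\varphi,\psi$ share at least $\tfrac n2-18r\delta^{1/r}\ge\tfrac n2-c n^{1-1/r}$ $2$‑cycles for an absolute constant $c$ (one checks $18r\delta^{1/r}/n^{1-1/r}$ is bounded uniformly in $r$ via $\delta^{1/r}\le \max\{(n^{r-1}/(100(r-1)!))^{1/r},\,n^{1/2}\}$ and Stirling); this is already the ``$2$‑cycles'' half of (2) once $\psi$ is shown to have few fixed points. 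Let $W$ be the union of these shared $2$‑cycles, so $\varphi=\psi$ on $W$. Now feed \cref{claim: count in terms of good pairs} into \eqref{eq: practical m m_g bound} with $x=\tfrac12\binom nr-m$ and $y=D$, using $2^{\binom nr-2x}=4^m$ and $2^{\binom nr}=2^{2f_r(n)}\cdot 2^{\binom nr-2f_r(n)}$. When $r\ge3$ the factor $2^{\binom nr-2f_r(n)}=2^{O(n^{\lfloor r/2\rfloor})}=e^{o(\binom n{r-1})}$ is negligible, so already $x=0$ forces $D<0.41\binom n{r-1}$ for $n$ large. When $r=2$ the competing factor $2^{\lfloor n/2\rfloor}$ is of the same order as $e^{\binom n{r-1}/100}$, so instead one uses the bootstrap: each shared $2$‑cycle $\{s,s'\}$ is a fixed edge of $\tilde\varphi$, so $|\mathrm{Fix}(\tilde\varphi)|\ge\tfrac n2-O(\sqrt n)$ and $m\le\tfrac12(\binom n2-|\mathrm{Fix}(\tilde\varphi)|)\le\tfrac12\binom n2-\tfrac n4+O(\sqrt n)$; taking $x=\tfrac12\binom n2-m\ge\tfrac n4-O(\sqrt n)$ cancels $2^{\lfloor n/2\rfloor}$, and combined with $m\ge f_2(n)-O(n)$ (from $e_\psi(N_\varphi(G))\le 4^m$) this yields $D<0.66\,n$ for $n$ large. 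In both cases $D=O(\binom n{r-1})$, equivalently $|\mc{E}_2(\psi)|\ge\binom nr-O(\binom n{r-1})$.

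\textbf{$\psi$ is an involution, and conditions (1)--(3).} Let $L:=\binom nr-|\mc{E}_2(\psi)|\le 2D$, so $L<\binom n{r-1}$ for $r\ge3$ and $L<1.5\,n$ for $r=2$ (for $n$ large). If $\psi$ had a cycle of length $\ell\ge3$, pick $v$ on it: for $r\ge3$ every edge containing $v$ but not $\psi^2(v)$ ($\neq v$) has $\tilde\psi^2$‑image distinct from itself, hence lies outside $\mc{E}_2(\psi)$, giving $L\ge\binom{n-2}{r-1}=(1-o(1))\binom n{r-1}$, a contradiction; for $r=2$ all $\ell(n-\ell)\ge3(n-3)$ edges meeting the cycle in exactly one vertex (or, when $\ell>n-3$, the $\binom\ell2\gg n$ intra‑cycle edges) lie outside $\mc{E}_2(\psi)$, again exceeding the bound on $L$. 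So $\psi$ is an involution; its fixed‑point set $Y$ satisfies $\binom{|Y|}{r}\le L$, whence $|Y|=O(n^{1-1/r})$, proving (1), and \cref{lemma: few half-two-cycles r-graph} gives $\binom tr\le D$ for $t$ the number of $2$‑cycles of $\psi$ missing from $\varphi$, so $t=O(n^{1-1/r})$, completing (2)'s $2$‑cycle count. For (3), suppose a $2$‑cycle $xy$ of $\psi$ has $\varphi(x)\neq y$ and $\varphi(y)\neq x$: for a good choosable pair $(e,f)$ one has $\varphi(e)=f=\psi(e)$, so $\varphi$ and $\tilde\psi$ agree on $e$; among the $\binom{n-2}{r-1}$ edges $e$ with $x\in e,\,y\notin e$, agreement forces $e$ to contain the fixed extra vertex $\varphi^{-1}(y)\neq x$, leaving only $O(n^{r-2})$ candidates (and likewise on the $(g,e)$ side), so $2D=\binom nr-2m_g\ge(1-o(1))\binom n{r-1}$ — contradicting $D<0.41\binom n{r-1}$ for $r\ge3$; for $r=2$ the same count through $x$ and through $y$ gives $D\ge n-O(1)$, contradicting $D<0.66\,n$. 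The ``$1$‑cycles'' clause of (2) is the identical argument applied to a fixed point of $\psi$ moved by $\varphi$.

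\textbf{Main obstacle.} The real difficulty is keeping the constants honest so as to conclude genuinely (not approximately) that $\psi$ is an involution and that (2), (3) hold: for $r\ge3$ the gap $\binom n{r-1}\gg n^{\lfloor r/2\rfloor}$ makes this automatic, but for $r=2$ the $2^{\lfloor n/2\rfloor}$ term forces the Step‑$2$ bootstrap (exploiting that $\varphi$ coincides with $\psi$ on a near‑perfect matching, so $\tilde\varphi$ has $\approx n/2$ fixed edges), and one must verify that this interaction produces a bound on $D$ strong enough to be compatible with the stated exponent $\binom n{r-1}/100$.
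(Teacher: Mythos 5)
Your overall strategy (bounding the deficiency $D=\tfrac{1}{2}\binom{n}{r}-m_g$ globally and then deriving (1)--(3) from local counts of edges avoiding good choosable pairs) is genuinely different from the paper's proof, which argues pointwise about vertices $w$ outside the set $C$ of shared $2$-cycles, and most of it checks out, including the delicate $r=2$ bootstrap giving $D<0.66n$. However, there is a concrete gap at the final step for $r=2$: the claim that the ``1-cycles'' clause of (2) follows from ``the identical argument applied to a fixed point of $\psi$ moved by $\varphi$.'' For a $2$-cycle $xy$ you count edges through two vertices and get $2D\ge 2n-O(1)$, which beats $0.66n$. For a fixed point $w$ of $\psi$ with $\varphi(w)\neq w$ you only have the $n-1$ edges through the single vertex $w$, of which at most two (namely $\{w,\varphi^{-1}(w)\}$ and $\{w,\varphi(w)\}$) can lie in a good choosable pair; this yields only $2D\ge n-3$, i.e.\ $D\ge \tfrac{n}{2}-O(1)$, which does not contradict $D<0.66n$. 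Moreover your ceiling cannot be tightened within your own framework: the constraint from \cref{eq: practical m m_g bound} reads $D<x(1-80\ln 2)+20n\ln 2+0.4n$, and since $x$ can genuinely be as small as about $n/4$ (e.g.\ when the only $\tilde{\varphi}$-fixed edges are the $\approx n/2$ shared-cycle edges), the bound $\approx 0.65n$ is what this method gives, so the single-vertex count is simply too weak to close this case.

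The gap is repairable, and the repair is essentially the paper's own $r=2$ refinement: the $n-3$ edges at $w$ avoiding good choosable pairs are disjoint from the $t\ge\tfrac{n}{2}-O(\sqrt{n})$ edges $u_iv_i$ spanned by the shared $2$-cycles (since $w\notin C$), and the latter are fixed by $\tilde{\varphi}$, hence lie in no choosable pair at all. Counting both families gives $2D\ge t+n-O(1)$, i.e.\ $D\ge 0.74n>0.66n$; equivalently, apply \cref{eq: practical m m_g bound} with $x=t/2$ and $y\ge t/2+(n-3)/2$, which is exactly how the paper treats a vertex $w\notin C$ with no good choosable pair into $C$. With this one adjustment (plus the minor bookkeeping ensuring the constant in your fixed-point bound for (1) is at most $2c$ for the same $c$ as in (2), harmless since $c$ is at your disposal), your argument is complete; the remaining steps, including the deduction that $\psi$ has no cycle of length at least $3$, the fixed-point bound via $\binom{|Y|}{r}\le 2D$, and the counts establishing (3) and the $r\ge 3$ case of the $1$-cycle clause, are correct.
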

\begin{proof}
    We assume that $e_{\psi}(N_{\varphi}(G)) > 2^{2f_r(n)} \cdot e^{-\binom{n}{r-1}/100}$ and show that (1)-(3) hold.
    Let $u_1v_2,\dots,u_tv_t$ be the common 2-cycles of $\varphi$ and $\psi$.
    By \cref{lemma: counts of normal phi r-graph} with $\delta=\binom{n}{r-1}$ (note that $\delta\ge n^{r/2}$ for all $r \geq 2$ and sufficiently large $n$), it holds that $t\ge\frac{n}{2}-18r\binom{n}{r-1}^{1/r}$.
    We have $\binom{n}{r-1}^{1/r} \leq (\frac{en}{r-1})^{\frac{r-1}{r}} \leq c' n^{1 - \frac{1}{r}}/r$ for some absolute constant $c'$. Hence, picking $c=18c'$, we have $t \geq \frac{n}{2}-cn^{1-\frac{1}{r}}$, proving the first part of Item (2). 
    
    Write $C = \{u_1,\dots,u_t,v_1,\dots,v_t\}$.
    If $C=[n]$, then $\varphi$ and $\psi$ share $\frac{n}{2}$ two-cycles, so $\varphi=\psi$ is an involution, and (1)-(3) are all satisfied.
    From now on, we assume that $C \neq [n]$.
    Suppose first that there exists $w \in [n]\setminus C$ such that for every edge $e \in \binom{[n]}{r}$ with $w \in e$ and $e \setminus \{w\} \subseteq C$, 
    $e$ does not belong to any good choosable pair for $(G,\varphi,\psi)$. As there are $\binom{2t}{r-1}$ edges $e$ with $w \in e$ and $e\setminus\{w\}\subset C$, it follows that the number of good choosable pairs $m_g$ (for $(G,\varphi,\psi)$) satisfies 
    \begin{equation}\label{eq:m_g bound, exceptional r-graphs lemma}
    2m_g \le \binom{n}{r}-\binom{2t}{r-1}= \binom{n}{r}-\binom{(1-o(1))n}{r-1}= \binom{n}{r}-(1-o(1))\binom{n}{r-1}.
    \end{equation}
    Also, letting $m$ be the number of choosable pairs for $(G,\varphi)$, we have the trivial bound $m \le \frac{1}{2}\binom{n}{r}$. If $r \geq 3$, then we use \cref{eq: practical m m_g bound} with $x = 0$ and $y = (\frac{1}{2}-o(1))\binom{n}{r-1}$ to get
    \begin{equation}\label{eq:bound for e_{psi} r>=3}
        \begin{aligned}
            e_\psi(N_\varphi(G))
            \le 2^{\binom{n}{r}}\cdot e^{-(\frac{1}{80}-o(1))\binom{n}{r-1}}
            = 2^{2f_r(n)+O(n^{r/2})}\cdot e^{-(\frac{1}{80}-o(1))\binom{n}{r-1}}
            \le 2^{2f_r(n)} \cdot e^{-\binom{n}{r-1}/100},
        \end{aligned}
    \end{equation}
    where the equality uses that $f_r(n) = \frac{1}{2}\binom{n}{r}-O(n^{r/2})$ (see \cref{eq: f_r(n)}), and the last inequality uses that $r/2 < r-1$ ($r \geq 3$) and that $n$ is sufficiently large.
    For $r = 2$, we improve the bound by using that $\varphi(u_iv_i) = u_iv_i$ for every $1 \leq i \leq t$, and thus $u_iv_i$ does not belong to any choosable pair of $(G,\varphi)$. Therefore, we have the improved bounds 
    $2m \leq \binom{n}{2} - t$ and 
    $2m_g \leq \binom{n}{2} - t - (1-o(1))n$ (improving on \eqref{eq:m_g bound, exceptional r-graphs lemma}). So by using \cref{eq: practical m m_g bound} with $x = t/2$ and $y = t/2 + (1/2-o(1))n$ we get
    \begin{equation}\label{eq:bound for e_{psi} r=2}
        e_\psi(N_\varphi(G)) \le 
        2^{\binom{n}{2} - t}\cdot e^{-(\frac{1}{80}-o(1))n} = 
        2^{2f_2(n) + o(n)} \cdot e^{-(\frac{1}{80}-o(1))n} \leq
        2^{2f_2(n)} \cdot e^{-n/100},
    \end{equation}
    where the inequality uses that $2f_2(n) = \binom{n}{2} - \floor{\frac{n}{2}}$ and $t = \frac{n}{2} - o(n)$. 
    Now, by \eqref{eq:bound for e_{psi} r>=3} and \eqref{eq:bound for e_{psi} r=2} we have $e_\psi(N_\varphi(G)) \le 2^{2f_r(n)} \cdot e^{-\binom{n}{r-1}/100}$, in contradiction to our assumption.

    So from now on, we assume that for every $w \in [n] \setminus C$, there exists $e \in \binom{n}{r}$ such that $w \in e$, $e \setminus \{w\} \subset C$, and $e$ lies in some good choosable pair for $(G,\varphi,\psi)$.
    By \cref{def: good choosable pair}, the other edge in this pair must be $f:=\psi(e)$, and we must have $\psi(f)=e$.
    Notice that $\psi(C)=C$.
    Therefore, $\psi(e\setminus C)=f\setminus C$ and $\psi(f \setminus C)=e \setminus C$.
    As $e\setminus C=\{w\}$, we get that $f\setminus C=\{w'\}$ for some $w' \in [n]\setminus C$ with $\psi(w)=w'$ and $\psi(w')=w$.
    This means $\psi^2(w)=w$. As this holds for every $w \in [n] \setminus C$, we get that $\psi$ is an involution.
    Also, since $(e,f)$ or $(f,e)$ is a choosable pair for $(G,\varphi)$, we have $\varphi(e)=f$ or $\varphi(f)=e$ (see \cref{def: good choosable pair}).
    As $\varphi(C)=C$, we get that $\varphi(e\setminus C)=f\setminus C$ or $\varphi(f \setminus C)=e \setminus C$, so $\varphi(w)=w'$ or $\varphi(w')=w$.
    Moreover, if $w$ is a fixed point of $\psi$ then $w' = w$, so $w$ is also a fixed point of $\varphi$. 
    This proves Items 2 and 3 in the lemma.
    Finally, since every fixed point of $\psi$ belongs to $[n] \setminus C$, and $|C| = 2t \geq n - 2cn^{1-1/r}$, Item (1) holds as well. 

    In particular, if $\varphi,\psi$ are both involutions, suppose $e_{\psi}(N_{\varphi}(G)) > 2^{2f_r(n)} \cdot e^{-\binom{n}{r-1}/100}$.
    By (2), $\varphi,\psi$ share all the fixed points.
    By (3), for every 2-cycle $x,y$ of $\psi$, we know $\varphi(x)=y$ or $\varphi(y)=x$, either meaning $xy$ is also a 2-cycle of $\varphi$.
    Hence, $\varphi=\psi$, as desired.
\end{proof}

\noindent
If $r \ge 3$, then we can impose even stronger restrictions on $\varphi,\psi$. 
\begin{lemma}\label{lemma: counts for exceptions r-graph}
    Suppose that $r \ge 3$ and $n$ is sufficiently large in terms of $r$.
    Let $G$ be an $r$-graph and let $\varphi,\psi \in S_n$.
    Then $e_{\psi}(N_{\varphi}(G)) \leq 2^{2f_r(n)} \cdot e^{-\binom{n}{r-2}/100}$ unless $\varphi=\psi$ is an involution.
\end{lemma}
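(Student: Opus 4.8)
The plan is to bootstrap the conclusion of Lemma \ref{lemma: general counts for exceptional r-graph}, which already tells us that $e_\psi(N_\varphi(G))$ is tiny (at most $2^{2f_r(n)}e^{-\binom{n}{r-1}/100}$) unless the three structural conditions (1)--(3) hold, namely $\psi$ is an involution with $O(n^{1-1/r})$ fixed points, $\varphi$ and $\psi$ share all but $O(n^{1-1/r})$ of their $2$-cycles, and on each $2$-cycle $xy$ of $\psi$ we have $\varphi(x)=y$ or $\varphi(y)=x$. Since $r\ge 3$ gives $\binom{n}{r-1}\ge\binom{n}{r-2}\cdot\Omega(n)$, in the regime where (1)--(3) fail we already get a bound far stronger than the desired $e^{-\binom{n}{r-2}/100}$. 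So I would assume $e_\psi(N_\varphi(G))>2^{2f_r(n)}e^{-\binom{n}{r-2}/100}$, whence (1)--(3) hold, and argue that in fact $\varphi=\psi$ is an involution — i.e.\ that $\varphi$ must agree with $\psi$ on the few remaining vertices too.

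Here is the main step. Let $u_1v_1,\dots,u_tv_t$ be the common $2$-cycles of $\varphi$ and $\psi$, and $C=\{u_i,v_i\}$, so $|[n]\setminus C|\le 2cn^{1-1/r}$ by (1)--(2). If $C=[n]$ we are done as in the earlier lemma; so suppose $[n]\setminus C\ne\emptyset$. The key observation, exploiting $r\ge 3$: I would show that if there is a vertex $w\in[n]\setminus C$ such that $\varphi(w)\ne\psi(w)$ or $w$ lies in a cycle of $\varphi$ that is not a $1$- or $2$-cycle matching $\psi$, then many edges $e$ containing $w$ with $e\setminus\{w\}\subseteq C$ fail to lie in any good choosable pair. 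Concretely: for such a $w$, and for any $(r-1)$-subset $S\subseteq C$ with $w\cup S$ transversal to $r-1$ of the common $2$-cycles, the edge $e=\{w\}\cup S$ has $\psi(e)=\{w'\}\cup S'$ (a genuine $2$-cycle of $\tilde\psi$), and one checks — exactly as in the proof of Lemma \ref{lemma: few half-two-cycles r-graph} — that $\varphi(e)\ne\psi(e)$ and $\varphi(\psi(e))\ne e$ because $\varphi$ misbehaves on $w$; hence $e$ is in no good choosable pair. This kills $\Omega(\binom{2t}{r-1})=\Omega(\binom{n}{r-1})$ edges from the count of good choosable pairs, giving $m_g\le\tfrac12\binom{n}{r}-\Omega(\binom{n}{r-1})$, and then \cref{eq: practical m m_g bound} with $x=0,\,y=\Omega(\binom{n}{r-1})$ yields $e_\psi(N_\varphi(G))\le 2^{\binom{n}{r}}e^{-\Omega(\binom{n}{r-1})}=2^{2f_r(n)+O(n^{r/2})}e^{-\Omega(\binom{n}{r-1})}\le 2^{2f_r(n)}e^{-\binom{n}{r-2}/100}$ (using $r/2<r-1$ and $r-1>r-2$), a contradiction. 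Therefore every $w\in[n]\setminus C$ satisfies $\varphi(w)=\psi(w)$, and since $\psi$ is already an involution with $\psi(C)=C$ and $\varphi$ agrees with $\psi$ on $C$ by construction, we conclude $\varphi=\psi$ is an involution.

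The step I expect to be the main obstacle is the combinatorial verification that, for the ``bad'' vertex $w$, a positive fraction of the transversal edges $e=\{w\}\cup S$ genuinely avoid all good choosable pairs — i.e.\ ruling out both $\varphi(e)=\psi(e)$ and $\varphi(\psi(e))=e$. This requires a careful case analysis on where $\varphi$ sends $w$ and the vertices of $S$ (paralleling the argument in Lemma \ref{lemma: few half-two-cycles r-graph} where images can land inside $U\cup V$ versus outside), and one has to be sure the ``at most one exceptional transversal edge'' phenomenon from that lemma generalizes here so that $\Omega(\binom{n}{r-1})$ edges really are lost. A secondary point to handle cleanly is the bookkeeping that $\binom{2t}{r-1}=(1-o(1))\binom{n}{r-1}$ since $2t=n-O(n^{1-1/r})$, and that this dominates the $O(n^{r/2})$ slack between $2f_r(n)$ and $\binom{n}{r}$; both are routine given $r\ge 3$.
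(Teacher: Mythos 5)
There is a genuine gap, and it sits exactly at the step you flagged as the main obstacle — but it is not a matter of careful case analysis: the claim is false. Suppose $(\varphi,\psi)$ satisfies (1)--(3) of \cref{lemma: general counts for exceptional r-graph} and $uv$ is a $2$-cycle of $\psi$ that is not a $2$-cycle of $\varphi$; by Item (3) we may assume $\varphi(u)=v$ while $\varphi(v)=z\neq u$, so your ``bad'' vertex is $w=v$. Take any $(r-1)$-set $S\subseteq C$ and let $e=\{v\}\cup S$, $f=\psi(e)=\{u\}\cup\psi(S)$. Since $\varphi$ agrees with $\psi$ on $C$ and $\varphi(u)=v$, one computes $\varphi(f)=\{v\}\cup S=e$, and of course $\psi(e)=f$, $\psi(f)=e$. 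Hence $(f,e)$ meets every permutation condition of a good choosable pair, and whether it actually is one depends only on whether $f\in G$ and $e\notin G$ — a condition on the arbitrary graph $G$ that you cannot exclude. So your assertion that $\varphi(\psi(e))\neq e$ ``because $\varphi$ misbehaves on $w$'' fails, and the intended conclusion cannot be repaired: the construction in \cref{prop: tightness of stability} (an exceptional pair with $\varphi$ differing from the involution $\psi$ on a single $4$-cycle, together with a suitable $G_0$) has $e_\psi(N_\varphi(G_0))\ge 2^{2f_r(n)-O(n^{r-2})}$, which contradicts the bound $2^{2f_r(n)}e^{-\Omega(\binom{n}{r-1})}$ you are aiming for whenever $\varphi\neq\psi$. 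The genuine deficiency of good choosable pairs in this situation is only of order $n^{r-2}$, not $n^{r-1}$.

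This forces a second change that your proposal misses. Because the attainable gain is only $e^{-\Theta(n^{r-2})}$, the trivial choice $x=0$ (i.e.\ $m\le\frac12\binom{n}{r}$) in \cref{eq: practical m m_g bound} is not affordable: the prefactor $2^{\binom{n}{r}}=2^{2f_r(n)+\Theta(n^{\floor{r/2}})}$ has slack $\Theta(n^{r-2})$ when $r\in\{3,4\}$, which can swallow the gain. The paper's proof therefore does two things differently: it only discards edges $e$ with $e\setminus C=\{u,v\}$, containing \emph{both} endpoints of the broken $2$-cycle — for these, $\varphi(e)\neq\psi(e)$ and $\varphi(\psi(e))\neq e$ both fail independently of $G$, giving $\binom{2t}{r-2}$ edges outside all good choosable pairs — and it compensates for this smaller gain by first removing the set $E$ of $\varphi$-fixed edges from the count of choosable pairs, so that the prefactor improves to $2^{\binom{n}{r}-|E|}=2^{2f_r(n)+o(n^{r-2})}$, which the factor $e^{-\binom{2t}{r-2}/80}$ then beats.
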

\begin{proof}
    Suppose $e_{\psi}(N_{\varphi}(G)) > 2^{2f_r(n)} \cdot e^{-\binom{n}{r-2}/100}$. Then $(\varphi,\psi)$ satisfy (1)-(3) in  \cref{lemma: general counts for exceptional r-graph}. 
    In particular, $\psi$ is an involution, and $\varphi$ and $\varphi$ share all 1-cycles and at least $(1-o(1))\frac{n}{2}$ $2$-cycles.
    If $\varphi = \psi$ then we are done, so let us assume by contradiction that $\varphi \neq \psi$.
    Let $u_1v_1,\dots,u_tv_t$ be the common 2-cycles of $\varphi$ and $\psi$.
    We have $t=(1-o(1)))\frac{n}{2}$.
    For convenience, denote $C=\{u_1,\dots,u_t,v_1,\dots,v_t\}$.
    Let $m$ be the number of choosable pair for $(G,\varphi)$ and let $m_g$ be the number of good choosable pairs for $(G,\varphi,\psi)$.

    Define an edge-set $E$ by 
    \begin{equation}
        E=
        \begin{cases}
            \big\{e \in \binom{C}{r}: |e\cap\{u_i,v_i\}|=0,2 \text{ for every } i = 1,\dots,t\big\} & r \text{ is even}, \\
            \big\{e \in \binom{C\cup\{w_0\}}{r}: w_0\in e, |e\cap\{u_i,v_i\}|=0,2 \text{ for every } i = 1,\dots,t \big\} & r \text{ is odd}, n\text{ is odd}, \\
            \emptyset & r \text{ is odd}, n\text{ is even}.
        \end{cases}
    \end{equation}
    where, in the second case, $w_0\in [n]$ is a fixed point of $\psi$ (which must exist if $n$ is odd). Note that $w_0$ is also a fixed point of $\varphi$, because $\varphi,\psi$ share all fixed points.
    Since $\varphi(w_0) = w_0$ and $\varphi(\{u_i,v_i\}) = \{u_i,v_i\}$ for every $1 \leq i \leq t$, we have that $\varphi(e) = e$ for every $e \in E$. 
    Hence, no edge of $E$ is contained in any choosable pair for $(G,\varphi)$. Therefore, $2m \le \binom{n}{r}-|E|$. Note also that $|E| = \binom{t}{\floor{r/2}} = \binom{n/2 - o(n)}{\floor{r/2}} = \binom{\floor{n/2}}{\floor{r/2}} - o(n^{\floor{r/2}})$, unless $r$ is odd while $n$ is even, in which case $|E| = 0$. 
    Using the definition of $f_r(n)$ and that $r \ge 3$, we get that
    \begin{equation}
    \label{eq: size of E exceptional for hypergraphs}
        \binom{n}{r} - |E| 
        = 2f_r(n) + o(n^{\floor{r/2}})
        = 2f_r(n) + o(n^{r-2}).
    \end{equation}

    Next we upper-bound $m_g$. Since we assumed that $\varphi\neq \psi$, there exist $u,v\in [n]$ that form a 2-cycle in $\psi$ but not in $\varphi$.
    By Item 3 in \cref{lemma: general counts for exceptional r-graph}, we have $\varphi(u)=v$ or $\varphi(v)=u$, and we cannot have both because otherwise $uv$ is also a 2-cycle in $\varphi$.
    Without loss of generality, assume that $\varphi(u)=v$ and $\varphi(v)\neq u$.
    Consider any $e \in \binom{[n]}{r}$ such that $e\setminus C=\{u,v\}$.
    We claim that $e$ does not lie in any good choosable pair for $(G,\varphi,\psi)$.
    Suppose it does. 
    The other edge in this pair must be $f:=\psi(e)$ (by the definition of good choosable pairs).
    As $\psi(C)=C$ and $e\setminus C=\{u,v\}$, we know that $f\setminus C=\{\psi(u),\psi(v)\}=\{u,v\}$.
    Since $(e,f)$ is a choosable pair,  we must have $\varphi(e)=f$ or $\varphi(f)=e$.
    As $\varphi(C)=C$, we have $\varphi(e\setminus C)=f\setminus C$ or $\varphi(f\setminus C)=e \setminus C$, either of which implies $\varphi(uv)=uv$.
    But $\varphi(u)=v$, so $\varphi(v)=u$, contradicting our assumption.
    Therefore, all edges $e \in \binom{[n]}{r}$ with $e \setminus C=\{u,v\}$ do not lie in any good choosable pair for $(G,\varphi,\psi)$.
    Note that there at least $\binom{2t}{r-2}$ such edges, and they are not in $E$.
    So, $2m_g \le \binom{n}{r}-|E|-\binom{2t}{r-2}$.

    Combining all of the above, we can apply \cref{eq: practical m m_g bound} with $x = \frac{1}{2}|E|$ and $y = \frac{1}{2}|E| +\frac{1}{2}\binom{2t}{r-2}$, to \nolinebreak get
    $$
    e_\psi(N_\varphi(G)) \leq 2^{\binom{n}{r} - |E|} \cdot e^{-\binom{2t}{r-2}/80} \leq 
    2^{2f_r(n) + o(n^{r-2})} \cdot e^{-\binom{2t}{r-2}/80} \leq 
    2^{2f_r(n)} \cdot e^{-\binom{n}{r-2}/100},
    $$
    where the second inequality uses \cref{eq: size of E exceptional for hypergraphs}
    and the last uses $r \geq 3$ and $2t = (1-o(1))n$.
\end{proof}

\subsection{Proof of \cref{theorem: stablility} for {$r \geq 3$}}
\label{subsec:main proof r>=3}
\newcommand{\neighbor}[2]{{N_{{#2}}^{{\cal G}}({#1})}}

We start with the following simple lemma, bounding the number of permutations that are close to a given permutation $\varphi_0$.
\begin{lemma}\label{lem: number of close permutations}
    Let $\varphi_0 \in S_n$ and let $0 \leq A \leq \frac{n}{2}$. The number of permutations $\varphi \in S_n$ that share at least $\left( \frac{n}{2} - A \right)$ 2-cycles with $\varphi_0$ is at most $n^{2A}$.
\end{lemma}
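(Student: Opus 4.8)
The plan is to show that any permutation $\varphi$ of the kind being counted is completely determined by a small amount of data, and then bound the number of possibilities for that data. The key observation is that if $\varphi$ shares at least $\frac n2 - A$ two-cycles with $\varphi_0$, then $\varphi$ and $\varphi_0$ must agree on every vertex lying in one of these common two-cycles: if $\{x,y\}$ is a two-cycle of both $\varphi$ and $\varphi_0$, then $\varphi(x)=y=\varphi_0(x)$ and $\varphi(y)=x=\varphi_0(y)$. Since these common two-cycles cover at least $2(\frac n2 - A)=n-2A$ vertices, the ``deviation set'' $D:=\{v\in[n]:\varphi(v)\neq\varphi_0(v)\}$ has $|D|\le 2A$, and hence $|D|\le a:=\floor{2A}\le n$.

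First I would canonically pad $D$ to a set $X=X_\varphi$ of size exactly $a$, by adjoining the $a-|D|$ smallest elements of $[n]\setminus D$ (this is possible since $a\le n$). Then $X$ is determined by $\varphi$ (the permutation $\varphi_0$ being fixed throughout), and $\varphi$ is in turn determined by the pair $(X,\varphi|_X)$: on $X$ one reads off $\varphi$ directly, and on $[n]\setminus X\subseteq[n]\setminus D$ one has $\varphi=\varphi_0$ by the definition of $D$. Thus the map $\varphi\mapsto(X_\varphi,\varphi|_{X_\varphi})$ is injective. Moreover, since $\varphi$ agrees with $\varphi_0$ outside $X$ and both are bijections of $[n]$, we get $\varphi(X)=\varphi_0(X)$, so $\varphi|_X$ is one of the $a!$ bijections from $X$ onto $\varphi_0(X)$.

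Therefore the number of such $\varphi$ is at most the number of pairs $(X,g)$ with $X\in\binom{[n]}{a}$ and $g\colon X\to\varphi_0(X)$ a bijection, which is $\binom{n}{a}\cdot a! = \frac{n!}{(n-a)!} = n(n-1)\cdots(n-a+1)\le n^{a}\le n^{2A}$, as required. The only mildly non-obvious step is the padding: without fixing $|X|=a$ one is instead led to a sum $\sum_{d\le 2A}\binom{n}{d}\cdot(\text{number of fixed-point-free restrictions on a }d\text{-set})$, which needs a short but slightly fiddly estimate to be bounded by $n^{2A}$ (and genuinely uses that single vertices cannot be deranged); collapsing the deviation set to a single fixed size makes the count a one-line binomial identity, so there is no real obstacle once that trick is in hand.
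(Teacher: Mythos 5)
Your proof is correct, and it is a genuine (if mild) variant of the paper's argument rather than a reproduction of it. The paper encodes $\varphi$ by first choosing which $\frac{n}{2}-A$ of the (at most $\floor{n/2}$) 2-cycles of $\varphi_0$ are shared, and then choosing an arbitrary permutation of the $\leq 2A$ leftover vertices, giving the bound $\binom{n/2}{A}\cdot(2A)! \le n^A(2A)^A \le n^{2A}$ (with a separate trivial remark for the case that $\varphi_0$ has too few 2-cycles). You instead extract explicitly the observation that shared 2-cycles force pointwise agreement, so that the deviation set $D=\{v:\varphi(v)\neq\varphi_0(v)\}$ has size at most $2A$, and then encode $\varphi$ by the padded set $X$ of size $a=\floor{2A}$ together with the bijection $\varphi|_X:X\to\varphi_0(X)$, giving $\binom{n}{a}\,a!=n(n-1)\cdots(n-a+1)\le n^{2A}$. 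Your route buys a slightly cleaner final estimate, dispenses with the degenerate case, handles non-integer $A$ explicitly, and in fact proves the more general statement that at most $n^{2A}$ permutations agree with a fixed $\varphi_0$ on at least $n-2A$ points; the paper's encoding is marginally shorter because it never introduces the deviation set and exploits that the shared cycles are chosen from only $\floor{n/2}$ candidates. All the steps you need (disjointness of the common 2-cycles, $\varphi(X)=\varphi_0(X)$ by bijectivity, injectivity of $\varphi\mapsto(X_\varphi,\varphi|_{X_\varphi})$) are verified or immediate, so there is no gap.
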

\begin{proof}
    If $\varphi_0$ has less than $\left( \frac{n}{2} - A \right)$ 2-cycles then there is nothing to prove. Otherwise, to specify $\varphi$ as in the statement, it suffices to choose the $\left( \frac{n}{2} - A \right)$ common 2-cycles and then specify a permutation on the remaining $2A$ vertices. 
    Using that $A \le \frac{n}{2}$, the number of choices is at most $\binom{n/2}{n/2-A} \cdot (2A)!=\binom{n/2}{A} \cdot (2A)!\le\frac{n^A(2A)!}{A!}\le n^{A}(2A)^{A} \leq n^{2A}$. 
\end{proof}

We now show that if $\mathcal{G}$ is a difference-isomorphic $r$-graph family of size close to $2^{f_r(n)}$, then $\mathcal{G}$ contains a large involution clique. 
Note that the following lemma works also for $r=2$, and will be used in \cref{subsec:main proof r=2} as well.

\begin{lemma}\label{theorem without large involution clique}
    Let $r\ge 2$, $n$ be sufficiently large, and let $\delta$ satisfy
    $n^{1/2}\log^{3/2} n \ll \delta \leq \frac{1}{250}\binom{n}{r-1}$.
    Let $\cal G$ be a difference-isomorphic family of $r$-graphs on $[n]$. Then $\abs{\cal G} \le 2^{f_r(n)}\cdot e^{-\delta/4}$ or $\mathcal{G}$ contains an involution clique of size at least $2^{f_r(n)}\cdot e^{-\delta}$.
\end{lemma}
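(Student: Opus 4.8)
The plan is to run a dichotomy argument on $\mathcal{G}$. Fix an arbitrary $G \in \mathcal{G}$. For each $\varphi \in S_n$ set $\neighbor{G}{\varphi} := N_\varphi(G) \cap \mathcal{G}$. Since $\mathcal{G}$ is difference-isomorphic, every $G' \in \mathcal{G}$ lies in $\neighbor{G}{\varphi}$ for some $\varphi$, so $\sum_{\varphi} |\neighbor{G}{\varphi}| \ge |\mathcal{G}|$ and by averaging there is $\varphi_1$ with $|\neighbor{G}{\varphi_1}| \ge |\mathcal{G}|/n!$. Call $\mathcal{G}_1 := \neighbor{G}{\varphi_1}$ and $\mathcal{G}_2 := \mathcal{G} \setminus \mathcal{G}_1$. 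If $\varphi_1$ happens to be an involution then $\mathcal{G}_1$ is a $\varphi_1$-clique by \cref{prop: involution relation}, so we already have an involution clique and we would be done if $|\mathcal{G}_1|$ is large; the only remaining worry is the case where $\varphi_1$ is far from being an involution and/or $\mathcal{G}_1$ is small. So the heart of the argument is to show: unless the desired conclusion holds, $|\mathcal{G}|$ must be tiny.

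Suppose then that $\mathcal{G}$ contains no involution clique of size $\ge 2^{f_r(n)} e^{-\delta}$; we must show $|\mathcal{G}| \le 2^{f_r(n)} e^{-\delta/4}$. First I would bound $|\neighbor{G}{\varphi}|$ for a \emph{non}-involution $\varphi$. For any $\varphi \in S_n$, pick by averaging (using difference-isomorphism again, $\sum_\psi e_\psi(\neighbor{G}{\varphi}) \ge |\neighbor{G}{\varphi}|^2$) a $\psi$ with $e_\psi(\neighbor{G}{\varphi}) \ge |\neighbor{G}{\varphi}|^2 / n!$; if $|\neighbor{G}{\varphi}|$ were $\ge 2^{f_r(n)} e^{-c\binom{n}{r-1}}$ for a suitable small $c$, this forces $e_\psi(N_\varphi(G)) > 2^{2f_r(n)} e^{-\binom{n}{r-1}/100}$, so by \cref{lemma: general counts for exceptional r-graph} $\psi$ is an involution with $\varphi$ sharing almost all its $2$-cycles and all $1$-cycles, with the structural constraint (3). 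Running the same averaging for the involution $\psi$: $N_\psi(G)\cap\mathcal G$ is a $\psi$-clique (by \cref{prop: involution relation}), hence by our standing assumption has size $< 2^{f_r(n)} e^{-\delta}$; but \cref{lemma: general counts for exceptional r-graph} applied with $\varphi$ replaced by $\psi$ forces $\varphi = \psi$ unless $e_\psi(N_\psi(G))$ is small, and a short count (comparing the number of $\varphi$ that can be this close to a single involution $\psi$, via \cref{lem: number of close permutations}, against the number of graphs each such $\varphi$ can be responsible for) lets one conclude that in fact $|\neighbor{G}{\varphi}| \le 2^{f_r(n)} e^{-\delta}$ for \emph{every} non-involution $\varphi$, and $\le 2^{f_r(n)} e^{-\delta}$ for every involution $\varphi$ too (the latter directly by the no-large-involution-clique assumption).

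Now decompose $|\mathcal{G}| = \sum_\varphi |\neighbor{G}{\varphi}|$, but this overcounts; instead, better to argue as follows. With $\mathcal{G}_1 = \neighbor{G}{\varphi_1}$ as above, $\mathcal{G}_1$ is either an involution clique (if $\varphi_1$ is an involution) — in which case $|\mathcal{G}_1| \le 2^{f_r(n)} e^{-\delta}$ by assumption — or it is a non-involution neighbourhood, also of size $\le 2^{f_r(n)} e^{-\delta}$. For $\mathcal{G}_2 = \mathcal{G}\setminus\mathcal{G}_1$: for each $\varphi_2 \in S_n$ count pairs $(G_1,G_2) \in \mathcal{G}_1 \times \mathcal{G}_2$ with $G_1 \overset{\varphi_2}{\rightarrow} G_2$. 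Summing over $\varphi_2$ gives $\ge |\mathcal{G}_1|\,|\mathcal{G}_2|$ by difference-isomorphism. By \cref{lemma: codegree is usually small} (applied with $G_1, G \in \mathcal{G}$, $\varphi_2, \varphi_1$, and $\delta$), for $\varphi_2$ \emph{not} sharing $\ge (\tfrac n2 - 80r\delta^{1/r})$ $2$-cycles with $\varphi_1$ we have $|N_{\varphi_2}(G_1) \cap N_{\varphi_1}(G)\cap \mathcal{G}| \le 2^{f_r(n)} e^{-\delta}$, hence those $\varphi_2$ contribute at most $n! \cdot |\mathcal{G}_1| \cdot 2^{f_r(n)} e^{-\delta}$ pairs; by \cref{lem: number of close permutations} there are only $n^{160r\delta^{1/r}} = e^{o(\delta)}$ of the remaining $\varphi_2$, and for each, $|N_{\varphi_2}(G_1)\cap\mathcal{G}_2| \le |\neighbor[G_1]{\varphi_2}| \le 2^{f_r(n)} e^{-\delta}$ by the non-involution / involution-clique bounds established above, contributing at most $e^{o(\delta)} \cdot |\mathcal{G}_1| \cdot 2^{f_r(n)} e^{-\delta}$ pairs. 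Comparing with the lower bound $|\mathcal{G}_1|\,|\mathcal{G}_2|$ and dividing by $|\mathcal{G}_1|$ yields $|\mathcal{G}_2| \le 2^{f_r(n)} e^{-\delta + o(\delta)} \le 2^{f_r(n)} e^{-\delta/2}$, and then $|\mathcal{G}| = |\mathcal{G}_1| + |\mathcal{G}_2| \le 2^{f_r(n)} e^{-\delta/4}$, as required.

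The main obstacle I anticipate is the bookkeeping in the step that upgrades the $O(\binom{n}{r-1})$-type bound from \cref{lemma: general counts for exceptional r-graph} into the clean bound $|\neighbor{G}{\varphi}| \le 2^{f_r(n)} e^{-\delta}$ for \emph{every} non-involution $\varphi$: one has to handle the narrow window of $\varphi$ that are extremely close to some involution $\psi$ but not equal to it, using that there are few such $\varphi$ (\cref{lem: number of close permutations}) and that none of them individually can have a large $\mathcal{G}$-neighbourhood because that neighbourhood would be forced (via the $e_\psi$ count against the $\psi$-clique $N_\psi(G)\cap\mathcal G$, which is small by hypothesis) to be small. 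Keeping the exponents consistent — in particular checking that $\delta^{1/r}\log n = o(\delta)$, which needs $\delta \gg n^{1/2}\log^{3/2} n$ only at $r=2$ and is easy for $r \ge 3$ — is the routine-but-delicate part.
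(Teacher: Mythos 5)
Your overall architecture is the same as the paper's (first bound $|N_\varphi(G)\cap\mathcal{G}|$ for every $\varphi$ using the no-large-involution-clique hypothesis, then run the $\mathcal{G}_1/\mathcal{G}_2$ pair count over $\varphi_2$), but the argument you give for the key intermediate claim does not work. You average to a \emph{single} $\psi$ with $e_\psi(N_\varphi(G)\cap\mathcal{G})\ge |N_\varphi(G)\cap\mathcal{G}|^2/n!$, and this $n!$ loss is fatal in two ways. First, for $r=2$ the threshold in \cref{lemma: general counts for exceptional r-graph} is only $e^{-\binom{n}{1}/100}=e^{-n/100}$, while the averaging costs a factor $1/n!=e^{-(1+o(1))n\ln n}$; so even if $|N_\varphi(G)\cap\mathcal{G}|$ were as large as $2^{f_2(n)}$ you could never certify $e_\psi(N_\varphi(G))>2^{2f_2(n)}e^{-n/100}$, and the lemma is needed for $r=2$ (the paper explicitly reuses it in the $r=2$ section). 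Second, even for $r\ge 3$, once you know $\psi$ is a nearby involution the only bound the clique hypothesis gives is $e_\psi(N_\varphi(G)\cap\mathcal{G})\le \sum_{G_1}|N_\psi(G_1)\cap\mathcal{G}|\le |N_\varphi(G)\cap\mathcal{G}|\cdot 2^{f_r(n)}e^{-\delta}$ (note it is $N_\psi(G_1)$ for $G_1$ in the neighbourhood, not $N_\psi(G)$ for the fixed $G$ as you wrote), and combined with your $n!$-lossy lower bound this yields only $|N_\varphi(G)\cap\mathcal{G}|\le n!\cdot 2^{f_r(n)}e^{-\delta}$, which is vacuous since $\delta$ is allowed to be as small as roughly $n^{1/2}\log^{3/2}n\ll n\ln n$. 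Your ``short count'' remark gestures at a fix but is circular as stated. The paper avoids the $n!$ loss by \emph{not} averaging: it writes $|N_\varphi(G)\cap\mathcal{G}|^2\le\sum_{\psi\in S_n}e_\psi(N_\varphi(G)\cap\mathcal{G})$ and splits the sum into $\psi$ far from $\varphi$ (handled by \cref{lemma: counts of normal phi r-graph} with the boosted parameter $2\delta+n\ln n+n^{r/2}$, which absorbs the $n!$), nearby non-involutions (only $e^{o(\delta)}$ many by \cref{lem: number of close permutations}, each controlled by \cref{lemma: general counts for exceptional r-graph} via $\delta\le\frac{1}{250}\binom{n}{r-1}$), and nearby involutions (controlled by the clique hypothesis); this three-way split is the missing idea.

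A secondary but real problem occurs in your second stage. You apply \cref{lemma: codegree is usually small} with parameter $\delta$ rather than $\delta+2n\ln n+n^{r/2}$, so after summing over the up to $n!$ far permutations $\varphi_2$ the bound $n!\cdot e^{-\delta}$ is \emph{not} $e^{-\delta+o(\delta)}$ when $\delta\ll n\ln n$; the parameter must be boosted exactly as in the first stage. Moreover, you orient the count by fixing $G_1\in\mathcal{G}_1$, which makes both the far and near contributions proportional to $|\mathcal{G}_1|$; dividing by $|\mathcal{G}_1|$ then only bounds $|\mathcal{G}_2|$, and the far term ruins it. The paper fixes $G_2\in\mathcal{G}_2$ for the far permutations, so that the far contribution is proportional to $|\mathcal{G}_2|$ and the final dichotomy either bounds $|\mathcal{G}_2|$ or bounds $|\mathcal{G}_1|$ itself, the latter converting into a bound on $|\mathcal{G}|$ through $|\mathcal{G}_1|\ge|\mathcal{G}|/n!$; without this orientation the argument does not close.
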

\begin{proof}
    We assume $\mathcal{G}$ has no involution clique of size $2^{f_r(n)}\cdot e^{-\delta}$, and prove that $\abs{\cal G} \le 2^{f_r(n)}\cdot e^{-\delta/4}$.
    For convenience, put $\neighbor{G}{\varphi} := N_{\varphi}(G) \cap \mathcal{G}$.
    We first show $\neighbor{G}{\varphi}$ is small for all $G \in \mathcal{G}$ and $\varphi \in S_n$.
    The intuition is that $e_\psi(\neighbor{G}{\varphi})$ is ``large'' only if $\psi$ is an involution (see \cref{lemma: general counts for exceptional r-graph}), which further implies a ``large'' $\psi$-clique inside $\neighbor{G}{\varphi}$ (this is impossible by our assumption).
    \begin{claim}\label{claim: phi-neighbourhood is small}
        $\abs{\neighbor{G}{\varphi}} \le 2^{f_r(n)}\cdot e^{-\delta/2}$ for all $G \in \mathcal{G}$ and $\varphi \in S_n$.
    \end{claim}
    \begin{proof}
        Since $\cal G$ is difference-isomorphic, we have 
        \begin{equation}\label{eq: sum of edges in N_{phi} over all psi}
            \abs{\neighbor{G}{\varphi}}^2\le \sum_{\psi \in S_n} e_\psi(\neighbor{G}{\varphi})
        \end{equation}
        Set $\Delta := 2\delta + n\ln{n} + n^{r/2}$ and $A := 18r\Delta^{1/r}$. 
        Note that $\Delta = o(n^r)$ and $A = O(\delta^{1/r} + (n\ln{n})^{1/r} + n^{1/2}) = o(\delta/\ln n)$, using $\delta\gg n^{1/2}\log^{3/2} n$ and $r \ge 2$.
        Partition $S_n$ into parts $P_1\cup P_2\cup P_3$ as follows:
        \begin{equation} \nonumber
            \begin{aligned}
                P_1 &:=\left\{\psi\in S_n: \varphi\text{ and }\psi\text{ share fewer than } \left(\frac{n}{2}-A\right) \text{ 2-cycles}\right\}, \\
                P_2 &:=\left\{\psi\in S_n\setminus P_1: \psi \text{ is not an involution}\right\}, \quad
                P_3 :=S_n \setminus (P_1\cup P_2).
            \end{aligned}
        \end{equation}
        Clearly, $\abs{P_1}\le |S_n| = n!$. By \cref{lemma: counts of normal phi r-graph} (with $\Delta$ in place of $\delta$) and by our choice of $A$, we have $e_\psi(\neighbor{G}{\varphi}) \le 2^{2f_r(n)} \cdot e^{-\Delta}$ for every $\psi \in P_1$. Therefore, the contribution of $P_1$ to the RHS of \cref{eq: sum of edges in N_{phi} over all psi} is 
        $\sum_{\psi \in P_1} e_\psi(\neighbor{G}{\varphi}) \leq n! \cdot 2^{2f_r(n)} \cdot e^{-\Delta} \leq 2^{2f_r(n)} \cdot e^{-2\delta}$, using that $\Delta \geq n \ln{n} + 2\delta$. 

        By \cref{lem: number of close permutations} with $\varphi_0=\varphi$ and $A$, we have $|P_2\cup P_3|\le n^{2A}\leq e^{o(\delta)}$ using that $A=o(\delta/\ln n)$.
        Now, by \cref{lemma: general counts for exceptional r-graph}, we have $e_\psi(\neighbor{G}{\varphi})\le 2^{2f_r(n)}\cdot e^{-\binom{n}{r-1}/100}$ for every $\psi \in P_2$ (because $\psi$ is not an involution). Hence, the contribution of $P_2$ is  
        $\sum_{\psi \in P_2} e_\psi(\neighbor{G}{\varphi}) \leq e^{o(\delta)} \cdot 2^{2f_r(n)}\cdot e^{-\binom{n}{r-1}/100} \leq 2^{2f_r(n)} \cdot e^{-2\delta}$, using 
        the assumption that $\delta \le \frac{1}{250}\binom{n}{r-1}$.

        Finally, we consider $P_3$. 
        Every $\psi \in P_3$ is an involution.
        Therefore, for every $G_1 \in \neighbor{G}{\varphi}$, the set $\neighbor{G_1}{\psi}$ is a $\psi$-clique by \cref{prop: involution relation}, and so $\abs{\neighbor{G_1}{\psi}} \le 2^{f_r(n)}\cdot e^{-\delta}$ by our assumption that all involution cliques are small.
        This means that 
        $$e_\psi(\neighbor{G}{\varphi})\le\sum_{G_1\in\neighbor{G}{\varphi}} \abs{\neighbor{G_1}{\psi}}\le \abs{\neighbor{G}{\varphi}}\cdot 2^{f_r(n)}\cdot e^{-\delta}.$$ 
        Hence, 
        $\sum_{\psi \in P_3} e_\psi(\neighbor{G}{\varphi}) \leq 
        |P_3| \cdot \abs{\neighbor{G}{\varphi}}\cdot 2^{f_r(n)}\cdot e^{-\delta} \leq \abs{\neighbor{G}{\varphi}}\cdot 2^{f_r(n)}\cdot e^{-2\delta/3}$, using $|P_3| \leq |P_2\cup P_3|\leq e^{o(\delta)}$.
        Plugging all of the above into \eqref{eq: sum of edges in N_{phi} over all psi}, we get
       \begin{equation} \nonumber
            \begin{aligned}
                \abs{\neighbor{G}{\varphi}}^2
                &\le \sum_{\psi \in P_1\cup P_2 \cup P_3} e_\psi(\neighbor{G}{\varphi}) \leq 
                2^{2f_r(n)}\cdot 2e^{-2\delta} + 
                \abs{\neighbor{G}{\varphi}}\cdot 2^{f_r(n)}\cdot e^{-2\delta/3}.
            \end{aligned}
        \end{equation}
        So $\abs{\neighbor{G}{\varphi}}^2$ is at most twice the first term or at most twice the second term on the RHS above. 
        Namely, 
        $\abs{\neighbor{G}{\varphi}}^2 \le 2 \cdot 2^{2f_r(n)} \cdot 2e^{-2\delta}$ or $\abs{\neighbor{G}{\varphi}} \leq 2 \cdot 2^{f_r(n)} \cdot e^{-2\delta/3}$. In either case, we get 
        $\abs{\neighbor{G}{\varphi}} \leq 2^{f_r(n)} \cdot e^{-\delta/2}$, as required. 
    \end{proof}

    We now continue with the proof of the lemma. 
    Fix an arbitrary $G \in \mathcal{G}$.
    Since $\cal G$ is difference-isomorphic, $\mathcal{G}=\bigcup_{\varphi\in S_n} \neighbor{G}{\varphi}$.
    By averaging, there exists $\varphi_1 \in S_n$ such that $\abs{\neighbor{G}{\varphi_1}}\ge\abs{\cal G}/n!$.
    Let $\mathcal{G}_1=\neighbor{G}{\varphi_1}$ and $\mathcal{G}_2=\mathcal{G}\setminus\mathcal{G}_1$.
    For $\varphi_2 \in S_n$, let $\mathcal{E}_{\varphi_2}=\{(G_1,G_2)\in\mathcal{G}_1\times\mathcal{G}_2: G_2 \overset{\varphi_2}{\rightarrow} G_1\}$.
    Again, using that $\cal G$ is difference-isomorphic, it holds that $\bigcup_{\varphi_2\in S_n}\mathcal{E}_{\varphi_2}=\mc{G}_1\times\mc{G}_2$. 
    Therefore, $\abs{\mc{G}_1}\abs{\mc{G}_2}\le\sum_{\varphi_2\in S_n}\abs{\mc{E}_{\varphi_2}}$. We will use this to upper bound $|\mathcal{G}_1|  |\mathcal{G}_2|$.
    Now, set $\Delta := \delta + 2n\ln n + n^{r/2}$ and $A = 80r\Delta^{1/r}$, and note that $A=o(\delta/\ln n)$ because $\delta\gg n^{1/2}\log^{3/2} n$ and $r \geq 2$.
    Define 
    \begin{equation} \nonumber
        \begin{aligned}
            Q =\left\{\varphi_2\in S_n: \varphi_1\text{ and }\varphi_2\text{ share fewer than } \left(\frac{n}{2}-A\right) \text{ 2-cycles}\right\}.
        \end{aligned}
    \end{equation}
    
    We bound $|\mathcal{E}_{\varphi_2}|$ separately for $\varphi_2 \in Q$ and $\varphi_2 \in [n] \setminus Q$. 
    First, fix any $\varphi_2 \in Q$ and $G_2\in\mc{G}_2$.
    By \cref{lemma: codegree is usually small} (with $G_1 := G$ and with $\Delta$ in place of $\delta$), and using our choice of $A$ and the assumption $\varphi_2 \in Q$, we have $\abs{\neighbor{G}{\varphi_1}\cap\neighbor{G_2}{\varphi_2}}< 2^{f_r(n)}\cdot e^{-\Delta} \leq 2^{f_r(n)} \cdot n^{-2n} \cdot e^{-\delta}$, using $\Delta \geq 2n\ln n + \delta$. 
    By summing over $G_2\in\mc{G}_2$, we get $\abs{\mc{E}_{\varphi_2}}\le\abs{\mc{G}_2}\cdot 2^{f_r(n)}\cdot n^{-2n} \cdot e^{-\delta}$, and by summing over all at most $n!$ permutations $\varphi_2 \in Q$, we obtain $\sum_{\varphi_2\in S_n}\abs{\mc{E}_{\varphi_2}} \leq \abs{\mc{G}_2} \cdot 2^{f_r(n)} \cdot n^{-n} \cdot e^{-\delta}$.
    
    Next, fix any $\varphi_2 \in S_n\setminus Q$ and $G_1 \in \mc{G}_1$. 
    Note that $G_2 \overset{\varphi_2}{\rightarrow} G_1$  if and only if $G_1 \overset{\varphi_2^{-1}}{\rightarrow} G_2$. Thus, $\mathcal{E}_{\varphi_2}$ is the set of pairs $(G_1,G_2) \in \mathcal{G}_1 \times \mathcal{G}_2$ with $G_1 \overset{\varphi_2^{-1}}{\rightarrow} G_2$.
    By \cref{claim: phi-neighbourhood is small}, $\abs{N_{\varphi_2^{-1}}(G_1)\cap\mc{G}_2}\le\abs{\neighbor{G_1}{\varphi_2^{-1}}} \le 2^{f_r(n)}\cdot e^{-\delta/2}$.
    By summing over $G_1\in\mc{G}_1$, we get $\abs{\mc{E}_{\varphi_2}}\le\abs{\mc{G}_1}\cdot 2^{f_r(n)}\cdot e^{-\delta/2}$.
    By \cref{lem: number of close permutations} with $\varphi_0=\varphi_1$ and $A$ as above, it holds that $\abs{S_n\setminus Q}\le n^{2A} \leq e^{o(\delta)}$, using $A=o(\delta/\ln n)$.
    Combining all of the above, we get 
     \begin{equation} \nonumber
        \begin{aligned}
            \abs{\mc{G}_1}\abs{\mc{G}_2}
            \le\sum_{\varphi_2\in S_n}\abs{\mc{E}_{\varphi_2}}
            \le 
            \abs{\mc{G}_2} \cdot 2^{f_r(n)} \cdot n^{-n} \cdot e^{-\delta} +   \abs{\mc{G}_1}\cdot 2^{f_r(n)}\cdot e^{-\delta/2 + o(\delta)}.
        \end{aligned}
    \end{equation}
    So $\abs{\mc{G}_1}\abs{\mc{G}_2}$ is at most twice the first term or at most twice the second term on the RHS above. 
    Namely, $\abs{\mc{G}_1} \le 2 \cdot 2^{f_r(n)} \cdot n^{-n} \cdot  e^{-\delta}$ or $\abs{\mc{G}_2}\le 2\cdot 2^{f_r(n)}\cdot e^{-\delta/2 + o(\delta)}$.
    In the former case, using $\abs{\mc{G}_1}\ge\abs{\mc{G}}/n!$, we get 
    $|\mathcal{G}| \leq 2 n! \cdot 2^{f_r(n)} \cdot n^{-n} \cdot  e^{-\delta} \leq 2^{f_r(n)}\cdot e^{-\delta}$, as required.
    And in the latter case, by combining the bound on $|\mathcal{G}_2|$ with the fact that 
    $\abs{\mc{G}_1}=\abs{\neighbor{G}{\varphi_1}}\le 2^{f_r(n)}\cdot e^{-\delta/2}$ (by \cref{claim: phi-neighbourhood is small}), we get that
    $\abs{\mc{G}}=\abs{\mc{G}_1}+\abs{\mc{G}_2}
        \le 3\cdot 2^{f_r(n)}\cdot e^{-\delta/2 + o(\delta)}
        \le 2^{f_r(n)}\cdot e^{-\delta/4},$ again giving the desired bound. 
\end{proof}
\noindent
We are finally ready to prove \cref{theorem: stablility} in the case $r \ge 3$.

\begin{proof}[Proof of \cref{theorem: stablility} for $r\ge 3$]
\let\neighbor\relax
\newcommand{\neighbor}[2]{{N_{#2}(#1)}}
    Let $\cal G'\subseteq \cal G$ be a largest involution clique in $\mathcal{G}$.
    Say $\cal G'$ is a $\psi$-clique, where $\psi \in S_n$.
    By assumption, $\mc{G}\neq \mc{G}'$.
    It suffices to show that $\abs{\cal G'} < 2^{f_r(n)}\cdot n^{-\binom{n}{r-2}/250}$ because then, \cref{theorem without large involution clique} with $\delta=\frac{1}{250}\binom{n}{r-2}\ll n^{r-1}$ implies that $\abs{\mc{G}} \le 2^{f_r(n)}\cdot e^{-\binom{n}{r-2}/1000}$, as \nolinebreak desired.

    So our goal is to show that $\abs{\cal G'}\le 2^{f_r(n)}\cdot n^{-\binom{n}{r-2}/250}$.
    Let $G \in \mathcal{G}\setminus \mathcal{G}'$.
    As $\cal G$ is difference-isomorphic, $\mathcal{G}'=\bigcup_{\varphi\in S_n}(\neighbor{G}{\varphi}\cap\mathcal{G}')$.
    Note that $\neighbor{G}{\psi}\cap\mathcal{G'}=\emptyset$, because otherwise $\mathcal{G}' \cup \{G\}$ would be a $\psi$-clique (by \cref{prop: involution relation}), contradicting the maximality of $\mathcal{G}'$. Hence, $\abs{\mathcal{G'}} \le \sum_{\varphi \in S_n \setminus \{\psi\}} \abs{\neighbor{G}{\varphi}\cap\mathcal{G}'}$.
    Set $A:=cn^{1-\frac{1}{r}}$, where $c$ is the constant given by \cref{lemma: general counts for exceptional r-graph}, and let 
    $$
        P_1=\left\{\varphi\in S_n: \varphi\text{ and }\psi\text{ share fewer than } \left(\frac{n}{2}-A\right) \text{ 2-cycles} \right\},
        \quad P_2=S_n \setminus (P_1 \cup \{\psi\}).
    $$
    By \cref{lemma: general counts for exceptional r-graph}, $e_{\psi}(\neighbor{G}{\varphi})\le 2^{2f_r(n)}\cdot e^{-\binom{n}{r-1}/100}$ for all $\varphi \in P_1$. By \cref{lemma: counts for exceptions r-graph}, $e_{\psi}(\neighbor{G}{\varphi})\le 2^{2 f_r(n)}\cdot e^{-\binom{n}{r-2}/100}$ for all $\varphi \in S_n \setminus \{\psi\}$, and in particular for all $\varphi \in P_2$. 
    Clearly, $\neighbor{G}{\varphi}\cap\mathcal{G}'\subseteq\mc{G}'$ is also a ${\psi}$-clique, so $e_\psi(\neighbor{G}{\varphi})\ge\abs{\neighbor{G}{\varphi}\cap\mathcal{G}'}^2$ for all $\varphi\in S_n$.
    Then, $\abs{\neighbor{G}{\varphi}\cap\mathcal{G}'}\le 2^{f_r(n)}\cdot e^{-\binom{n}{r-1}/200}$ for $\varphi \in P_1$ and $\abs{\neighbor{G}{\varphi}\cap\mathcal{G}'}\le 2^{f_r(n)}\cdot e^{-\binom{n}{r-2}/200}$ for $\varphi \in P_2$.
    Note that $|P_2|\le n^{2A} \leq e^{o(n)}$ by \cref{lem: number of close permutations} with $\varphi_0=\psi$.
    Combining all of the above and using that $r \geq 3$ and $n$ is sufficiently large, we get:
    \begin{equation} \nonumber
        \begin{aligned}
            \abs{\mathcal{G'}} 
            \le&\, \sum_{\varphi\neq\psi} \abs{\neighbor{G}{\varphi}}
            = \sum_{\varphi\in P_1} \abs{\neighbor{G}{\varphi}}+\sum_{\varphi\in P_2} \abs{\neighbor{G}{\varphi}} \\
            \le&\, n!\cdot 2^{f_r(n)}\cdot e^{-\binom{n}{r-1}/200} + e^{o(n)}\cdot 2^{f_r(n)}\cdot e^{-\binom{n}{r-2}/200}
            \le 2^{f_r(n)}\cdot e^{-\binom{n}{r-2}/250},
        \end{aligned}
    \end{equation}
    completing the proof. 
\end{proof}

\subsection{Proof of \cref{theorem: stablility} for {$r = 2$}}
\label{subsec:main proof r=2}

Throughout this section, we assume that $r=2$. 
We begin with some lemmas. 
First, it will be convenient to introduce the following definition, giving a name to pairs $(\varphi,\psi)$ which satisfy (1)-(3) in
\cref{lemma: general counts for exceptional r-graph}.
\begin{definition}[Exceptional]\label{def:exceptional}
A pair of permutations $(\varphi,\psi)$ is {\em exceptional} if the following holds, where $c$ is the constant given by \cref{lemma: general counts for exceptional r-graph}.
\begin{enumerate}
    \item[(1)] $\psi$ is an involution with at most $2c\sqrt{n}$ 1-cycles (fixed points).
    \item[(2)] $\varphi,\psi$ share at least $\left(\frac{n}{2}-c\sqrt{n}\right)$ 2-cycles and all the 1-cycles.
    \item[(3)] For every two-cycle $xy$ in $\psi$, it holds that $\varphi(x) = y$ or $\varphi(y) = x$.
\end{enumerate}
\end{definition}
Note that $(\varphi,\psi)$ is exceptional if and only if $(\varphi^{-1},\psi)$ is. 
\cref{lemma: general counts for exceptional r-graph} states that $e_{\psi}(N_{\varphi}(G)) \leq 2^{2f_2(n)} \cdot e^{-n/100}$ unless $(\varphi,\psi)$ is exceptional. We strengthen this statement by adding a restriction on the graph $G$:
\begin{lemma}\label{lemma: counts for exceptions}
    Suppose that $r=2$ and $n$ is sufficiently large.
    Let $G$ be a graph and let $\varphi,\psi \in S_n$.
    Then $e_{\psi}(N_{\varphi}(G)) \leq 2^{2f_2(n)} \cdot e^{-n/100}$, unless the following holds:
    \begin{enumerate}
       \item $(\varphi,\psi)$ is exceptional.
       \item For every 2-cycle $xy$ of $\psi$, if $\varphi(x) = y$ but $\varphi(y)\neq x$, then $d_G(x) > \frac{n}{2}$ and $d_G(y) < \frac{n}{2}$.
    \end{enumerate}
\end{lemma}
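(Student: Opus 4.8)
The plan is to derive both conclusions from the single assumption $e_{\psi}(N_{\varphi}(G)) > 2^{2f_2(n)}\cdot e^{-n/100}$. Item 1 is immediate: applying \cref{lemma: general counts for exceptional r-graph} with $r=2$ (so that $\binom{n}{r-1}=n$) shows that under this assumption $(\varphi,\psi)$ satisfies (1)--(3) of that lemma, i.e.\ $(\varphi,\psi)$ is exceptional in the sense of \cref{def:exceptional}. Let $u_1v_1,\dots,u_tv_t$ be the common $2$-cycles of $\varphi$ and $\psi$ and put $C=\{u_1,\dots,u_t,v_1,\dots,v_t\}$; by Item (2) of \cref{def:exceptional} we have $t\ge \tfrac n2 - c\sqrt n$, so $|C|=2t$, and $\varphi(C)=\psi(C)=C$. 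Since $\varphi$ fixes each edge $u_iv_i$, none of these $t$ edges lies in a choosable pair for $(G,\varphi)$, so the number $m$ of choosable pairs satisfies $m\le \tfrac12\big(\binom n2 - t\big)$.

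Now fix any $2$-cycle $xy$ of $\psi$ with $\varphi(x)=y$ but $\varphi(y)\ne x$, and note $x,y\notin C$. The crux is to bound the number $m_g$ of good choosable pairs by identifying which edges at $x$ or $y$ can lie in one. Set $C'=\{a\in C:\ ax\in G,\ \psi(a)y\notin G\}$ and $C''=C\setminus C'$. I claim that for every $a\in C''$, neither $ax$ nor $\psi(a)y$ lies in any good choosable pair for $(G,\varphi,\psi)$. Indeed, the two edges of a good choosable pair form a $2$-cycle of $\tilde\psi$ (see \cref{def: good choosable pair}), and $\tilde\psi(ax)=\{\psi(a),\psi(x)\}=\psi(a)y$, so a good choosable pair meeting $\{ax,\psi(a)y\}$ must be exactly $\{ax,\psi(a)y\}$. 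Writing $b=\psi(a)$, we have $\varphi(b)=a$ (as $b$ lies on the common $2$-cycle of $\varphi,\psi$ through $a$), hence $\varphi(by)=\{a,\varphi(y)\}\ne ax$ because $\varphi(y)\ne x$; therefore the only way $\{ax,by\}$ can be a choosable pair for $(G,\varphi)$ is as the ordered pair $(ax,by)$, which forces $ax\in G$ and $by\notin G$, i.e.\ $a\in C'$ — contradiction. This produces $2|C''|$ distinct edges (all different from the $u_iv_i$, since $x,y\notin C$) lying in no good choosable pair, so $m_g\le \tfrac12\big(\binom n2 - t - 2|C''|\big)$.

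To finish, suppose for contradiction that $|C'|\le n/2$. Then $|C''|=2t-|C'|\ge \tfrac n2 - 2c\sqrt n$, and \cref{eq: practical m m_g bound} with $r=2$, taking the parameters there to be $t/2$ and $t/2+|C''|$, gives $e_{\psi}(N_{\varphi}(G))\le 2^{\binom n2 - t}\cdot e^{-|C''|/40}\le 2^{2f_2(n)+c\sqrt n}\cdot e^{-(\frac n2 - 2c\sqrt n)/40}\le 2^{2f_2(n)}\cdot e^{-n/100}$ for $n$ large, where we used $2f_2(n)=\binom n2 - \lfloor n/2\rfloor$ and $t=\tfrac n2-O(\sqrt n)$; this contradicts our assumption. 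Hence $|C'|>n/2$. Since each $a\in C'$ is a neighbour of $x$ in $G$, we get $d_G(x)\ge |C'|>n/2$; and since the $|C'|$ distinct vertices $\{\psi(a):a\in C'\}\subseteq C$ (none equal to $y\notin C$) are non-neighbours of $y$, we get $d_G(y)\le n-1-|C'|<n/2$. As $xy$ was an arbitrary such $2$-cycle, Item 2 follows. I expect the main obstacle to be the bookkeeping in the middle paragraph: pinning down exactly which edges at $x,y$ can lie in a good choosable pair, checking that the failure of membership is precisely the degree condition of Item 2, and keeping $m$ and $m_g$ in a shape to which \cref{eq: practical m m_g bound} directly applies.
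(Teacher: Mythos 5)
Your proposal is correct and follows essentially the same route as the paper's proof: Item 1 via \cref{lemma: general counts for exceptional r-graph}, then the same sets $C',C''$, the same argument that for $a\in C''$ neither $ax$ nor $\psi(a)y$ lies in a good choosable pair (using $\varphi(\psi(a))=a$ and $\varphi(y)\neq x$), and the same application of \cref{eq: practical m m_g bound} with $x=t/2$, $y=t/2+|C''|$ to force $|C'|>n/2$ and hence the degree conclusion.
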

\begin{proof}
    Suppose $e_\psi(N_\varphi(G)) > 2^{2f_2(n)} \cdot e^{-n/100}$. That $(\varphi,\psi)$ is exceptional follows from \cref{lemma: general counts for exceptional r-graph}. So it remains to establish Item 2 of the lemma.
    Let $u_1v_1,\dots,u_tv_t$ be the common 2-cycles of $\varphi$ and $\psi$, where $t=\frac{n}{2}-O(\sqrt{n})$.
    Write $C=\{u_1,\dots,u_t,v_1,\dots,v_t\}$.
    Let $m$ be the number of choosable pairs of $(G,\varphi)$ and let $m_g$ be the number of good choosable pairs of $(G,\varphi,\psi)$. Since $\varphi(u_iv_i) = u_iv_i$, none of the edges $u_iv_i$ belongs to any choosable pair, hence $2m \leq \binom{n}{2} - t$.

    Now, let $xy$ by any 2-cycle of $\psi$ such that $\varphi(x)=y$ and $\varphi(y)\neq x$.
    Clearly, $x,y \notin C$.
    Denote $C'=\{a \in C: ax \in G \text{ and } \psi(a)y \notin G\}$ and $C'' = C \setminus C'$. 
    We claim that for every $a \in C''$, $ax$ and $\psi(a)y$ do not belong to any good choosable pair of $(G,\varphi,\psi)$. 
    Indeed, let $a \in C''$, and put $b = \psi(a)$ (so $ab = u_iv_i$ for some $i \in [t]$).
    By \cref{def: good choosable pair}, a good choosable pair $(e,f)$ satisfies $\varphi(e) = f$, $e \in G$, $f \notin G$, $\psi(e) = f$ and $\psi(f) = e$. Since $\psi(x) =y$, $\psi(y) = x$, $\psi(a) = b$ and $\psi(b) = a$, a good choosable pair containing $ax$ or $by$ must be $(ax,by)$ or $(by,ax)$. 
    In fact, this pair must be $(ax,by)$, because $\varphi(by) \neq ax$ (as $\varphi(b)=a$ and $\varphi(y) \neq x$). 
    However, as $a \notin C'$, we have $ax \notin G$ or $by \in G$, meaning that $(ax,by)$ is not a choosable pair, thus proving the claim. 
    This gives $2|C''|$ additional edges which do not belong to any good choosable pair (i.e., in addition to the edges $u_1v_1,\dots,u_tv_t$). Hence, $2m_g \leq \binom{n}{2} - t - 2|C''|$.

    Now we show that $|C'| > n/2$. 
    Otherwise, we would have $|C''|=2t-|C'| \geq \frac{n}{2} - O(\sqrt{n})$, and hence 
    $2m_g \leq \binom{n}{2}-t-n+O(\sqrt{n})$.
    Now, by \cref{eq: practical m m_g bound} with $x = \frac{t}{2}$ and $y = \frac{t}{2} + \frac{n}{2} - O(\sqrt{n})$, we get 
    \begin{equation} \nonumber
        \begin{aligned}
            e_\psi(N_\varphi(G))
            \le 2^{\binom{n}{2}-t} \cdot e^{-\frac{n}{80} + O(\sqrt{n})} 
            \le 2^{2f_2(n)+O(\sqrt{n})}\cdot e^{-\frac{n}{80} + O(\sqrt{n})} 
            < 2^{2f_2(n)}\cdot e^{-n/100},
        \end{aligned}
    \end{equation}
    where we used that $2f_2(n)=\binom{n}{2}-\floor{\frac{n}{2}}$ (see \cref{eq: f_r(n)}) and $t = \frac{n}{2} - O(\sqrt{n})$.
    This contradicts our assumption in the beginning of the proof.
    So indeed $|C'| > n/2$. Now, by the definition of $C'$, we have that $\deg_G(x) \geq |C'|$ and $\deg_G(y) \leq n-1 - |C'|$. Hence, $\deg_G(x)>\frac{n}{2}>\deg_G(y)$, as \nolinebreak required.  
\end{proof}


\cref{theorem without large involution clique} implies that a large difference-isomorphic graph family must have a large $\psi$-clique for some involution $\psi$. In \cref{lemma: any non-small involution clique is large} we will show that this $\psi$-clique must in fact span almost the entire family, meaning that the number of graphs not belonging to the $\psi$-clique is exponentially smaller than $2^{f_2(n)}$. The proof of \cref{lemma: any non-small involution clique is large} relies on the following lemma.

\begin{lemma}\label{lem: leftover of psi clique is small}
    Let $(\varphi,\psi)$ be an exceptional pair with $\varphi \neq \psi$, and let $\mathcal{G}$ be a $\psi$-clique.
    Then the number of pairs of graphs $(G_1,G_2)$ with $G_1 \in \mathcal{G}$ and $G_1 \overset{\varphi}{\rightarrow} G_2$ is at most $2^{2f_2(n)} \cdot 2^{-\Omega(n)}$. 
\end{lemma}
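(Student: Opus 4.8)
Write $N=\sum_{G_1\in\mathcal G}|N_\varphi(G_1)|$ for the quantity to be bounded. The plan is to enlarge $\mathcal G$ to a full $\psi$-clique and then estimate $\sum 2^{m(G_1)}$ over it by a generating-function computation. Fixing $G_0\in\mathcal G$, \cref{prop: involution relation} gives $\mathcal G\subseteq\mathcal H:=N_\psi(G_0)$, and $\mathcal H$ consists precisely of the graphs obtained from $G_0$ by independently switching the chosen edge inside any subset of the \emph{type-$1$} pairs of $\mathcal C_2(\psi)$ — call a pair $\{e,f\}\in\mathcal C_2(\psi)$ of type $k$ if every graph of the class contains exactly $k$ of $e,f$. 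If $K$ is the number of type-$1$ pairs then $|\mathcal H|=2^K$ and $K\le|\mathcal C_2(\psi)|\le f_2(n)$ by \cref{claim: fixed r-sets}. Since $|N_\varphi(G_1)|=2^{m(G_1)}$ with $m(G_1)$ the number of choosable pairs of $(G_1,\varphi)$ (\cref{lemma: properties of choosable pairs}), and $\mathcal G\subseteq\mathcal H$, it is enough to show $\sum_{G_1\in\mathcal H}2^{m(G_1)}\le 2^{2f_2(n)}2^{-\Omega(n)}$.

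First I would dispose of an easy case. Since $(\varphi,\psi)$ is exceptional, $\varphi$ and $\psi$ share at least $\tfrac n2-c\sqrt n$ $2$-cycles, so $|\mathcal C_1(\varphi)|\ge\tfrac n2-c\sqrt n$ and hence $m(G_1)\le\tfrac12\big(\binom n2-|\mathcal C_1(\varphi)|\big)\le f_2(n)+O(\sqrt n)$ for all $G_1$. Thus if $K\le f_2(n)-\varepsilon n$ (for a small constant $\varepsilon>0$ to be fixed), then $\sum_{G_1\in\mathcal H}2^{m(G_1)}\le 2^K\cdot2^{f_2(n)+O(\sqrt n)}\le 2^{2f_2(n)}2^{-\Omega(n)}$. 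So assume $K>f_2(n)-\varepsilon n$. Then the number $\ell$ of non-type-$1$ pairs satisfies $\ell=|\mathcal C_2(\psi)|-K\le f_2(n)-K<\varepsilon n$, and $|\mathcal C_2(\psi)|>f_2(n)-\varepsilon n$ forces $\psi$ to have only $O(\sqrt{\varepsilon n})$ fixed points (by the computation behind \cref{claim: fixed r-sets}); in particular the ``disagreement set'' $B=\{v:\varphi(v)\neq\psi(v)\}$ has $|B|=O(\sqrt n)$ and all but $\ell<\varepsilon n$ of the $\psi$-pairs are type $1$.

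Next comes the structural heart. Using $\varphi\neq\psi$, property $(3)$ of \cref{def:exceptional}, and the fact that $\varphi,\psi$ share all fixed points, one extracts four distinct vertices $x,y,w,w'$ with $\{x,y\},\{w,w'\}$ disjoint $2$-cycles of $\psi$ and $\varphi(x)=y$, $\varphi(y)=w$, $\varphi(w)=w'$ (take $w:=\varphi(y)$, $w':=\psi(w)$; injectivity of $\varphi,\psi$ gives all relations, and $y\in B$, $x\notin B$). For $G_1\in\mathcal H$ and a vertex $s$ lying in a $2$-cycle of $\psi$ shared with $\varphi$, set $\alpha_s=\mathbf 1[ys\in G_1]$ and $\gamma_s=\mathbf 1[w\psi(s)\in G_1]$. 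The key point is that whenever the $\psi$-pairs $\{ys,x\psi(s)\}$ and $\{w\psi(s),w's\}$ are both type $1$, the edge $ys$ lies in \emph{no} choosable pair of $(G_1,\varphi)$ exactly when $\alpha_s=\gamma_s=1$ — this follows from $\varphi(ys)=w\psi(s)$, $\varphi^{-1}(ys)=x\psi(s)$, and the fact that $G_1$ contains exactly one edge of each type-$1$ pair. Let $S$ be the set of such $s$; at most $|B|+2\ell$ vertices are excluded, so $|S|\ge(1-O(\varepsilon))n-O(\sqrt n)=\Omega(n)$. The $\widetilde\varphi$-fixed edges avoiding $B$ are also in no choosable pair and are disjoint from the edges at $y$, so, writing $W(G_1)=\binom n2-2m(G_1)$ for the number of edges in no choosable pair,
$$W(G_1)\ \ge\ |\mathcal C_1(\psi)|-o(n)\ +\ \sum_{s\in S}\alpha_s(G_1)\,\gamma_s(G_1).$$

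Finally I would run the sum. From $m(G_1)=\tfrac12(\binom n2-W(G_1))$,
$$\sum_{G_1\in\mathcal H}2^{m(G_1)}\ \le\ 2^{\binom n2/2-|\mathcal C_1(\psi)|/2+o(n)}\sum_{G_1\in\mathcal H}2^{-\frac12\sum_{s\in S}\alpha_s\gamma_s}.$$
Since $\{x,y\}$ and $\{w,w'\}$ are disjoint $2$-cycles, the coordinates $(\alpha_s)_{s\in S}$ and $(\gamma_s)_{s\in S}$ are $2|S|$ distinct free coordinates of $\mathcal H$, so the inner sum equals $2^K\big(\tfrac34+\tfrac1{4\sqrt2}\big)^{|S|}$ (expanding over independent uniform bits). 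Using $\binom n2/2-|\mathcal C_1(\psi)|/2=|\mathcal C_2(\psi)|\le f_2(n)$, $K\le f_2(n)$, $\tfrac34+\tfrac1{4\sqrt2}<1$, and $|S|=\Omega(n)$ (with $\varepsilon$ chosen small enough that the $o(n)$ error is dominated), the bound becomes $2^{2f_2(n)}2^{-\Omega(n)}$, as needed. I expect the main obstacle to be the error bookkeeping in the displayed lower bound on $W(G_1)$ — precisely accounting for which edges must miss every choosable pair when $B$ may have size $\Theta(\sqrt n)$ — together with verifying that $\alpha_s,\gamma_s$ index genuinely distinct, independently switchable coordinates of $\mathcal H$, which is exactly where the disjointness of $\{x,y\}$ and $\{w,w'\}$ enters.
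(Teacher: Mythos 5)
Your proposal is correct, and its structural core coincides with the paper's: you isolate the same quadruple of vertices (your $x,y,w,w'$ are the paper's $x,y,z,w$), you use the same two families of $\tilde\psi$-pairs $\{ys,x\psi(s)\}$ and $\{w\psi(s),w's\}$, and your observation that $ys$ avoids every choosable pair of $(G_1,\varphi)$ exactly when $\alpha_s=\gamma_s=1$ is precisely the paper's reason for introducing the index set $I(G)$. Where you genuinely diverge is in how the counting is aggregated. The paper works with $\mathcal G$ itself, splits it by a threshold ($|I(G)|\ge n/10$ or not), bounds the small part by a per-index $4/3/1$-choices count plus a binomial tail estimate (Chernoff), and bounds $|N_\varphi(G)|$ separately on each part. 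You instead pass to the full clique $\mathcal H=N_\psi(G_0)\supseteq\mathcal G$, write $|N_\varphi(G_1)|\le 2^{m(G_1)}$ with $2m(G_1)=\binom n2-W(G_1)$, and evaluate $\sum_{G_1\in\mathcal H}2^{-\frac12\sum_s\alpha_s\gamma_s}=2^K\bigl(\tfrac34+\tfrac1{4\sqrt2}\bigr)^{|S|}$ by exploiting that the $2|S|$ relevant type-$1$ pairs are distinct free coordinates of $\mathcal H$; the preliminary case split on $K$ replaces the paper's graceful handling of non-type-$1$ pairs inside the $3$-choices bound. Your route avoids the dichotomy and the Chernoff bound at the cost of the extra case analysis, and both give $2^{2f_2(n)}2^{-\Omega(n)}$. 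Two small bookkeeping remarks: the vertices excluded from $S$ are not only those in $B$ and those hit by non-type-$1$ pairs, but also the $\psi$-fixed points, the vertices of the $O(\sqrt n)$ non-shared $2$-cycles (which, like $x$, need not lie in $B$), and $s\in\{w,w'\}$ (for which $\{w\psi(s),w's\}$ degenerates) — all still $O(\sqrt n)+O(\varepsilon n)$, so $|S|=\Omega(n)$ survives; and the relation $\varphi(w)=w'$ does hold (property (3) plus injectivity rules out $\varphi(w')=w$), but it is never actually needed, just as the paper only uses $\varphi(x)=y$ and $\varphi(y)=z$.
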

\begin{proof}
    Recall that $\psi$ is an involution by \cref{def:exceptional}.
    Fix $y \in [n]$ such that $\varphi(y) \neq \psi(y)$ (we assume that $\varphi \neq \psi$). Put $x = \psi(y)$ and $z = \varphi(y)$, so $x \neq z$. Note that $y,z$ are not 1-cycles in $\varphi$ or $\psi$, because $\varphi,\psi$ share all 1-cycles by Item 2 of \Cref{def:exceptional}. Hence, $xy$ is a $2$-cycle of $\psi$. By Item 3 in \Cref{def:exceptional}, we have $\varphi(x) = y$; indeed, $\varphi(y) = x$ is impossible because $\varphi(y) = z \neq x$.
    Also, setting $w = \psi(z)$, we have $\psi(w) = z$ and $w \neq z$ (because $z$ is not a 1-cycle). 
    Let $u_1v_1,\dots,u_tv_t$ be the 2-cycles shared by $\varphi$ and $\psi$ (clearly $u_i,v_i \notin \{x,y,z,w\}$ for every $i$). 
    By Item 2 of \Cref{def:exceptional}, we have 
    $t = \frac{n}{2} - O(\sqrt{n})$.
    For a graph $G \in \mathcal{G}$, let $I(G)$ be the set of all indices $1 \leq i \leq t$ such that $v_iy,u_iz \in G$. 
    Let $\mathcal{G}_{0}$ be the set of graphs $G \in \mathcal{G}$ such that $|I(G)| \geq \frac{n}{10}$.  
    To prove the lemma, we will show that $\mathcal{G} \setminus \mathcal{G}_0$ is small and that $N_{\varphi}(G)$ is small for every $G \in \mathcal{G}_0$. 
    \begin{claim}\label{claim: X-X0}
        $|\mathcal{G} \setminus \mathcal{G}_0| \leq 2^{f_2(n)} \cdot 2^{-\Omega(n)}$.
    \end{claim}
    \begin{proof}
        By \Cref{prop: involution relation}, the graphs in $\mathcal{G}$ agree on all edges in $\mathcal{C}_1(\psi)$, and for every pair $(e,f) \in \mathcal{C}_2(\psi)$, there is $a_{e,f} \in \{0,1,2\}$ such that every graph in $\mathcal{G}$ contains exactly $a_{e,f}$ of the edges $e,f$. 
        Hence, letting $S$ be the set of pairs $e,f$ with $a_{e,f} = 1$, we see that a graph $G$ in $\mathcal{G}$ is determined by deciding if $G$ contains $e$ or $f$ for every pair $(e,f) \in S$. 
        In particular, $|\mathcal{G}| \leq 2^{|S|}$. 
        Since $u_iv_i \in \mathcal{C}_1(\psi)$ for every $1 \leq i \leq t$, we have 
        $|S| \leq \frac{1}{2}\left( \binom{n}{2} - t \right)$.

        Let us fix a set $I \subseteq [t]$ and count all $G \in \mathcal{G} \setminus \mathcal{G}_0$ satisfying $I(G) = I$. 
        Note that there are 
        $\binom{t}{|I|}$ choices for $I$ of a given size. 
        Fix any $i \in [t]$, and note that $(u_ix,v_iy),(u_iz,v_iw)$ are both 2-cycles of $\tilde{\psi}$. 
        Thus, a priori, a graph $G \in X$ has (at most) 2 choices for each of these pairs, so 4 choices altogether.
        If $i \in I(G)$, then by the definition of $I(G)$ we have $v_iy,u_iz \in G$, so $G$ has at most one choice on the pairs $(u_ix,v_iy),(u_iz,v_iw)$. 
        Indeed, either $a_{u_ix,v_iy} = 2$, in which case there is  only one choice, or $a_{u_ix,v_iy} = 1$ and the choice $u_ix \notin G, v_iy \in G$ is forced; and the same applies to $(u_iz,v_iw)$.
        On the other hand, if $i \notin I(G)$ then $G$ has at most 3 choices on $(u_ix,v_iy),(u_iz,v_iw)$, because either one of these two pairs is not in $S$, in which case $G$ has at most 2 choices on these pairs, or both are in $S$ and the choice  $u_ix \notin G$, $v_iy \in G$, $u_iz \in G$, $v_iw \notin G$ is not allowed (because $i \notin I(G)$). 
        It follows that when restricted to the pairs in the set $P := \{(u_ix,v_iy),(u_iz,v_iw) : i = 1,\dots,t\}$, the number of choices for $G \in \mathcal{G}$ with $I(G) = I$ is $3^{t-|I|}$. 
        Recall that if $G \in \mathcal{G} \setminus \mathcal{G}_0$ then $|I(G)| \leq \frac{n}{10}$. 
        Hence, when restricted to the pairs in $P$, the number of choices for $G \in \mathcal{G} \setminus \mathcal{G}_0$ is $\sum_{k \leq n/10} \binom{t}{k} 3^{t - k} = 4^{t} \cdot \mathbb{P}\left[ \Bin(t, \frac{1}{4}) \leq \frac{n}{10} \right] \leq 
        4^{t - \Omega(n)}$, using the Chernoff bound and the fact that $t = (\frac{1}{2} - o(1))n$. 
        Since all edges in the pairs of $P$ and those of form $u_iv_i$, $1\le i\le t$, are not contained in the pairs of $S\setminus P$, it holds that $2\abs{S\setminus P} \le \binom{n}{2}-t-4t$, meaning $\abs{S\setminus P} \le \frac{1}{2}\binom{n}{2}-\frac{5t}{2}$.
        Now, using that each $G \in X$ has at most $2$ choices for each remaining pair $(e,f) \in S \setminus P$, we get that
        $|\mathcal{G} \setminus \mathcal{G}_0| \leq 2^{\frac{1}{2}\binom{n}{2}-\frac{5t}{2}} \cdot 4^{t - \Omega(n)} = 2^{|S| - \Omega(n)} \leq
        2^{\frac{1}{2}(\binom{n}{2} - t)} \cdot 2^{-\Omega(n)} = 2^{f_2(n)} \cdot 2^{-\Omega(n)}$,
        where the last equality uses $f_2(n) = \frac{1}{2}\left(\binom{n}{2} - \floor{\frac{n}{2}}\right)$ and $t = \frac{n}{2} - o(n)$.
    \end{proof}
    Next, we bound $|N_{\varphi}(G)|$ for every $G \in \mathcal{G}$. We begin with a simple bound that holds for every $G \in \mathcal{G}$, and then prove a stronger bound for $G \in \mathcal{G}_0$. 
    \begin{claim}\label{claim:degree bound on X}
        For every $G \in \mathcal{G}$, it holds that $|N_{\varphi}(G)| \leq 2^{f_2(n)} \cdot 2^{O(\sqrt{n})}$.
    \end{claim}
    \begin{proof}
        Let $m$ be the number of choosable pairs of $(G,\varphi)$.
        As $u_iv_i \in \mc{C}_1(\varphi)$ ($\varphi(u_iv_i)=u_iv_i$) for all $1 \le i \le t$, all these edges do not belong to any choosaable pairs.
        Hence,  $m \leq \frac{1}{2}\left( \binom{n}{2} - t \right) = \frac{1}{2}\left( \binom{n}{2} - \frac{n}{2} + O(\sqrt{n}) \right) = f_2(n) + O(\sqrt{n})$. 
        Now, \Cref{lemma: properties of choosable pairs} implies
        $|N_{\varphi}(G)| \leq 2^m \leq 2^{f_2(n)} \cdot 2^{O(\sqrt{n})}$. 
    \end{proof}
    \begin{claim}\label{claim:degree bound on X0}
        For every $G \in \mathcal{G}_0$, it holds that $|N_{\varphi}(G)| \leq 2^{f_2(n)} \cdot 2^{-\Omega(n)}$. 
    \end{claim}
    \begin{proof}
        Again, let $m$ be the number of choosable pairs of $(G,\varphi)$. 
        As in the previous proof, $u_iv_i$ does not belong to any choosable pair (for every $i=1,\dots,t$). 
        To improve further, we show that for every $i \in I(G)$, the edge $v_iy$ does not belong to any choosable pair either. 
        Indeed, recall that a choosable pair $(e,f)$ satisfies $\varphi(e) = f$, $e \in G$, $f \notin G$. We have $\varphi(u_ix) = v_iy$ and $\varphi(v_iy) = u_iz$. 
        Hence, if $v_iy$ belongs to a choosable pair $(e,f)$, then either $e=u_ix,f=v_iy$ or $e=v_iy,f=u_iz$. But $v_iy,u_iz \in G$ (because $i \in I(G)$), and this rules out both options. Using that $|I(G)| \geq \frac{n}{10}$ (as $G \in \mathcal{G}_0$), we get
        $m \leq \frac{1}{2}\left( \binom{n}{2} - t - |I(G)| \right) \leq \frac{1}{2}\left( \binom{n}{2} - t - \frac{n}{10} \right) = 
        \frac{1}{2}\left( \binom{n}{2} - \frac{n}{2} \right) - \Omega(n) = f_2(n) - \Omega(n)$, where we used that $f_2(n) = \frac{1}{2}\left(\binom{n}{2} - \floor{\frac{n}{2}}\right)$ and $t = \frac{n}{2} - o(n)$.
        Now the claim follows from $|N_{\varphi}(G)| \leq 2^m$ (see \Cref{lemma: properties of choosable pairs}). 
    \end{proof}
    We are now ready to complete the proof of \Cref{lem: leftover of psi clique is small}. 
    The number of pairs of graphs $(G_1,G_2)$ with $G_1 \in \mathcal{G}$ and $G_1 \overset{\varphi}{\rightarrow} G_2$ is 
    $
    \sum_{G_1 \in \mathcal{G}}|N_{\varphi}(G_1)| = 
    \sum_{G_1 \in \mathcal{G} \setminus \mathcal{G}_0}|N_{\varphi}(G_1)| + 
    \sum_{G_1 \in \mathcal{G}_0}|N_{\varphi}(G_1)|.$ 
    For the first sum, using \Cref{claim: X-X0} and 
    \Cref{claim:degree bound on X}, it holds that 
    $$
    \sum_{G_1 \in \mathcal{G} \setminus \mathcal{G}_0}|N_{\varphi}(G_1)|
    \leq 
    |\mathcal{G} \setminus \mathcal{G}_0| \cdot 2^{f_2(n)} \cdot 2^{O(\sqrt{n})} \leq 2^{2f_2(n)} \cdot 2^{-\Omega(n)}.
    $$
    And for the second sum, using \Cref{claim:degree bound on X0} and $|\mathcal{G}| \leq 2^{f_2(n)}$ (see Item 2 of \cref{prop:extremal construction}), we get
    $$
    \sum_{G_1 \in \mathcal{G}_0}|N_{\varphi}(G_1)| \leq |\mathcal{G}| \cdot 2^{f_2(n)} \cdot 2^{-\Omega(n)}.
    $$
    This completes the proof of the lemma.
\end{proof}
Now, we will show that a large involution clique must span almost the whole difference-isomorphic family by simply counting over all $\varphi\in S_n$.
\begin{lemma} \label{lemma: any non-small involution clique is large}
    Let $\cal G$ be a difference-isomorphic family on $[n]$, and let $\cal G' \subseteq \cal G$ be a maximal involution clique in $\mathcal{G}$.
    Then $\abs{\cal G'} < 2^{f_2(n)}\cdot e^{-cn}$ or
    $\abs{{\cal G}\setminus{\cal G'}} < 2^{f_2(n)}\cdot e^{-cn}$ for some absolute constant $c > 0$.
\end{lemma}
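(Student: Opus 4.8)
The plan is to prove the following equivalent form: there is a small absolute constant $c>0$ such that if $\abs{\mathcal{G}'}\geq 2^{f_2(n)}e^{-cn}$ then $\abs{\mathcal{G}\setminus\mathcal{G}'}<2^{f_2(n)}e^{-cn}$. Since $\mathcal{G}'$ is an involution clique, fix an involution $\psi\in S_n$ with $\mathcal{G}'$ a $\psi$-clique. For $\varphi\in S_n$ put $T_\varphi:=\{(G_1,G_2)\in\mathcal{G}'\times(\mathcal{G}\setminus\mathcal{G}') : G_1\overset{\varphi}{\rightarrow}G_2\}$. Because $\mathcal{G}$ is difference-isomorphic, every pair in $\mathcal{G}'\times(\mathcal{G}\setminus\mathcal{G}')$ lies in some $T_\varphi$, so $\sum_{\varphi\in S_n}\abs{T_\varphi}\geq\abs{\mathcal{G}'}\cdot\abs{\mathcal{G}\setminus\mathcal{G}'}$, and the whole task reduces to upper bounding $\sum_\varphi\abs{T_\varphi}$. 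Two easy observations will be used throughout: (a) $T_\psi=\emptyset$, since $(G_1,G_2)\in T_\psi$ would give $G_2\in N_\psi(G_1)\cap\mathcal{G}$, which by \cref{prop: involution relation} makes $\mathcal{G}'\cup\{G_2\}$ a $\psi$-clique, contradicting maximality; (b) for any $\varphi$ and any fixed $G_2$, the set $\{G_1\in\mathcal{G}' : G_1\overset{\varphi}{\rightarrow}G_2\}$ equals $\mathcal{G}'\cap N_{\varphi^{-1}}(G_2)$, which, being a subfamily of the $\psi$-clique $\mathcal{G}'$, is itself a $\psi$-clique, so $\abs{\mathcal{G}'\cap N_{\varphi^{-1}}(G_2)}^{2}\leq e_\psi(N_{\varphi^{-1}}(G_2))$. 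I will also use that, $\psi$ being an involution, $\varphi$ and $\varphi^{-1}$ share the same $2$-cycles with $\psi$, and $(\varphi,\psi)$ is exceptional if and only if $(\varphi^{-1},\psi)$ is.

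I then split $S_n\setminus\{\psi\}$ into three parts and bound $\abs{T_\varphi}$ in each. (i) If $\varphi$ (equivalently $\varphi^{-1}$) shares fewer than $\tfrac n2-36n^{3/4}$ $2$-cycles with $\psi$, then \cref{lemma: counts of normal phi r-graph}, applied with $r=2$, $\delta=n^{3/2}$ and the permutation $\varphi^{-1}$, gives $e_\psi(N_{\varphi^{-1}}(G_2))\leq 2^{2f_2(n)}e^{-n^{3/2}}$, so by (b) $\abs{T_\varphi}\leq\abs{\mathcal{G}\setminus\mathcal{G}'}\cdot 2^{f_2(n)}e^{-n^{3/2}/2}$; summing over the at most $n!$ such $\varphi$ gives a contribution of at most $\abs{\mathcal{G}\setminus\mathcal{G}'}\cdot 2^{f_2(n)}e^{-n^{3/2}/3}$. (ii) If $\varphi$ shares at least $\tfrac n2-36n^{3/4}$ $2$-cycles with $\psi$ but $(\varphi,\psi)$ is not exceptional, then $(\varphi^{-1},\psi)$ is not exceptional either, so \cref{lemma: general counts for exceptional r-graph} with $r=2$ gives $e_\psi(N_{\varphi^{-1}}(G_2))\leq 2^{2f_2(n)}e^{-n/100}$, hence $\abs{T_\varphi}\leq\abs{\mathcal{G}\setminus\mathcal{G}'}\cdot 2^{f_2(n)}e^{-n/200}$; by \cref{lem: number of close permutations} there are only $e^{o(n)}$ such $\varphi$, so their total contribution is at most $\abs{\mathcal{G}\setminus\mathcal{G}'}\cdot 2^{f_2(n)}e^{-n/300}$. (iii) If $(\varphi,\psi)$ is exceptional and $\varphi\neq\psi$, then \cref{lem: leftover of psi clique is small} (with $\mathcal{G}'$ in the role of its $\psi$-clique) bounds the number of pairs $(G_1,G_2)$ with $G_1\in\mathcal{G}'$ and $G_1\overset{\varphi}{\rightarrow}G_2$ by $2^{2f_2(n)}\cdot 2^{-\Omega(n)}$, so $\abs{T_\varphi}\leq 2^{2f_2(n)}\cdot 2^{-\Omega(n)}$; since an exceptional $\varphi$ shares at least $\tfrac n2-O(\sqrt n)$ $2$-cycles with $\psi$, \cref{lem: number of close permutations} again gives $e^{o(n)}$ such $\varphi$, whose total contribution is at most $2^{2f_2(n)}\cdot 2^{-\Omega(n)}$. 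These three cases together with $\{\psi\}$ exhaust $S_n$, since an exceptional $\varphi\neq\psi$ falls in case (iii) and is not in cases (i)–(ii).

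Adding the contributions, there is an absolute $\gamma>0$ with
\[
\abs{\mathcal{G}'}\cdot\abs{\mathcal{G}\setminus\mathcal{G}'}
\;\leq\;\abs{\mathcal{G}\setminus\mathcal{G}'}\cdot 2^{f_2(n)}e^{-n/400}\;+\;2^{2f_2(n)}\cdot 2^{-\gamma n}.
\]
Now fix $c$ so small that, for all large $n$, both $2^{f_2(n)}e^{-cn}\geq 2\cdot 2^{f_2(n)}e^{-n/400}$ and $c<\tfrac12\gamma\ln 2$ hold. If $\mathcal{G}\setminus\mathcal{G}'=\emptyset$ we are done; otherwise the hypothesis $\abs{\mathcal{G}'}\geq 2^{f_2(n)}e^{-cn}$ gives $\abs{\mathcal{G}'}-2^{f_2(n)}e^{-n/400}\geq\tfrac12\abs{\mathcal{G}'}$, so the displayed inequality rearranges to $\tfrac12\abs{\mathcal{G}'}\cdot\abs{\mathcal{G}\setminus\mathcal{G}'}\leq 2^{2f_2(n)}\cdot 2^{-\gamma n}$, whence
\[
\abs{\mathcal{G}\setminus\mathcal{G}'}\;\leq\;\frac{2\cdot 2^{2f_2(n)}\cdot 2^{-\gamma n}}{\abs{\mathcal{G}'}}\;\leq\;2\cdot 2^{f_2(n)}\cdot 2^{-\gamma n}\cdot e^{cn}\;<\;2^{f_2(n)}e^{-cn},
\]
the last inequality using $c<\tfrac12\gamma\ln 2$ and $n$ large. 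This proves the lemma.

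The real content lies in the two inputs \cref{lemma: general counts for exceptional r-graph} and \cref{lem: leftover of psi clique is small}; granting those, the rest is union-bound bookkeeping. The step I expect to be the main obstacle — and the reason $r=2$ is harder than $r\geq3$ — is case (iii): for exceptional $(\varphi,\psi)$ with $\varphi\neq\psi$ the cheap edge-count bounds of \cref{lemma: general counts for exceptional r-graph} give essentially nothing, and one must instead exploit the finer structure captured by \cref{lem: leftover of psi clique is small} (splitting the $\psi$-clique via the random-looking statistic $\abs{I(G)}$ and applying a Chernoff bound) to extract the saving factor $2^{-\Omega(n)}$. A secondary point requiring care is calibrating the thresholds in case (i): one picks $\delta=n^{3/2}$ so that the strong bound $e^{-n^{3/2}}$ comfortably beats the up-to-$n!$ permutations summed there, while the weaker bounds of cases (ii)–(iii) are only summed over the $e^{o(n)}$ permutations close to $\psi$; and one uses throughout that closeness to $\psi$ and being exceptional are both invariant under $\varphi\mapsto\varphi^{-1}$.
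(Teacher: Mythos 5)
Your proposal is correct and follows essentially the same route as the paper's proof: the same double count of $\mathcal{G}'\times(\mathcal{G}\setminus\mathcal{G}')$ pairs, the same three-way partition of $S_n\setminus\{\psi\}$ (far from $\psi$ via \cref{lemma: counts of normal phi r-graph}, close but non-exceptional via the exceptional-pair lemma, and exceptional $\varphi\neq\psi$ via \cref{lem: leftover of psi clique is small}), and the same final dichotomy argument. The only cosmetic deviations are that you pass through $\varphi^{-1}$ (handling the orientation of the relation a bit more explicitly than the paper) and invoke \cref{lemma: general counts for exceptional r-graph} rather than its $r=2$ refinement \cref{lemma: counts for exceptions} in the non-exceptional case, both of which are fine.
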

\begin{proof}
    Say that $\cal G'$ is a $\psi$-clique for some involution $\psi$.
    For $\varphi \in S_n$, let $\mathcal{E}_{\varphi}$ be the set of pairs $(G_1,G_2) \in \mathcal{G}' \times (\mathcal{G} \setminus \mathcal{G}')$ such that $G_2 \overset{\varphi}{\rightarrow} G_1$. Since $\mathcal{G}$ is difference-isomorphic, we have $|\mathcal{G}'||\mathcal{G} \setminus \mathcal{G}'| \leq \sum_{\varphi \in S_n} |\mathcal{E}_{\varphi}|$. 
    Note that $E_{\psi} = \emptyset$, because if $G_2 \overset{\psi}{\rightarrow} G_1$ for some $G_1 \in \mathcal{G}', G_2 \in \mathcal{G} \setminus \mathcal{G}'$, then $\mathcal{G'} \cup \{G_2\}$ is a larger $\psi$-clique (see \cref{prop: involution relation} and the following paragraph), contradicting the maximality of $\mathcal{G}'$. 
    As in the proof for $r\ge 3$, we partition $S_n$ into several classes and bound the contribution of each class. Setting $\delta = 2n\ln{n}$ and $A = 36\sqrt{\delta}$, 
    define:
    \begin{equation} \nonumber
        \begin{aligned}
            P_1 &:=\left\{\varphi\in S_n: \varphi\text{ and }\psi\text{ share fewer than } \left(\frac{n}{2}-A\right) \text{ 2-cycles}\right\}, \\
            P_2 &:=\left\{\varphi\in S_n\setminus P_1: (\varphi,\psi) \text{ is not exceptional}\right\}, \quad
            P_3 :=S_n \setminus (P_1\cup P_2 \cup \{\psi\}).
        \end{aligned}
    \end{equation}
    Note that as $N_{\varphi}(G_2) \cap \mathcal{G}' \subseteq \mathcal{G}'$ is a $\psi$-clique, we have 
    $|N_{\varphi}(G_2) \cap \mathcal{G}'|^2 \leq e_{\psi}(N_{\varphi}(G_2))$.
    
    First we consider $P_1$. Fix $\varphi \in P_1$ and $G_2 \in \mathcal{G} \setminus \mathcal{G}'$.  By \cref{lemma: counts of normal phi r-graph} and by our choice of $A$, we have $e_{\psi}(N_{\varphi}(G_2)) \leq 2^{2f_2(n)} \cdot e^{-\delta}$, so 
    $|N_{\varphi}(G_2) \cap \mathcal{G}'| \leq 2^{f_2(n)} \cdot e^{-\delta/2} = 2^{f_2(n)} \cdot n^{-n}$. Summing over all $G_2 \in \mathcal{G}\setminus \mathcal{G}'$ and all (at most $n!$) permutations $\varphi \in P_1$, we get 
    \begin{equation}\label{eq:P1}
    \sum_{\varphi \in P_1} |\mathcal{E}_{\varphi}| \leq 
    \sum_{\varphi \in P_1} \sum_{G_2 \in \mathcal{G}\setminus \mathcal{G}'} |N_{\varphi}(G_2) \cap \mathcal{G}'| \leq 
    n! \cdot |\mathcal{G} \setminus \mathcal{G}'| \cdot 2^{f_2(n)} \cdot n^{-n} \leq 
    |\mathcal{G} \setminus \mathcal{G}'| \cdot 2^{f_2(n)} \cdot e^{-\Omega(n)}.
    \end{equation}
    
    Next, by \cref{lem: number of close permutations} with $\varphi_0 = \psi$ and $A$, we have $|P_2 \cup P_3| \leq n^{2A} \leq e^{o(n)}$. Also, for each $\varphi \in P_2$ we have $e_{\psi}(N_{\varphi}(G_2)) \leq 2^{2f_2(n)} \cdot e^{-\Omega(n)}$ by \cref{lemma: counts for exceptions}, because $(\varphi,\psi)$ is not exceptional. Therefore, $|N_{\varphi}(G_2) \cap \mathcal{G}'| \leq 2^{f_2(n)} e^{-\Omega(n)}$. Summing over all $G_2 \in \mathcal{G} \setminus \mathcal{G}'$, we get $|\mathcal{E}_{\varphi}| \leq |\mathcal{G} \setminus \mathcal{G}'| \cdot 2^{f_2(n)} \cdot e^{-\Omega(n)}$. Moreover, for each $\varphi \in P_3$, we have that $|\mathcal{E}_{\varphi}| \leq 2^{2f_2(n)} \cdot 2^{-\Omega(n)}$ by \cref{lem: leftover of psi clique is small}. Summing over all $e^{o(n)}$ permutations $\varphi \in P_2 \cup P_3$, we get
    \begin{equation}\label{eq: P2,P3}
     \sum_{\varphi \in P_2 \cup P_3} |\mathcal{E}_{\varphi}| \leq 
     |\mathcal{G} \setminus \mathcal{G}'| \cdot 2^{f_2(n)} \cdot e^{-\Omega(n)} + 2^{2f_2(n)} \cdot 2^{-\Omega(n)}.  
    \end{equation}
    By combining \eqref{eq:P1} and \eqref{eq: P2,P3}, we get that 
    $$
    |\mathcal{G}'| \cdot |\mathcal{G} \setminus \mathcal{G}'| \leq 
    \sum_{\varphi \in S_n} |\mathcal{E}_{\varphi}| \leq  
    |\mathcal{G} \setminus \mathcal{G}'| \cdot 2^{f_2(n)} \cdot e^{-\Omega(n)} + 2^{2f_2(n)} \cdot 2^{-\Omega(n)}.
    $$
    Therefore, there exists some absolute constant $c > 0$ such that either
    $|\mathcal{G}'| < 2^{f_2(n)} \cdot e^{-cn}$ or 
    $|\mathcal{G}'| \geq 2^{f_2(n)} \cdot e^{-cn}$ and
    $|\mathcal{G}'| \cdot |\mathcal{G} \setminus \mathcal{G}'| < 2^{2f_2(n)} \cdot e^{-2cn}$. In both cases, the assertion of the lemma holds. 
\end{proof}


The following lemma is the last step towards \cref{theorem: stablility}. It shows that if a difference-isomorphic graph family contains an involution clique of size almost $2^{f_2(n)}$, then the family itself is an involution clique.

\begin{lemma}\label{lemma: no very large involution clique}
    There exists a constant $c>0$ such that the following holds for sufficiently large $n$.
    Let $\cal G$ be a difference-isomorphic family of graphs on $n$ vertices.
    Suppose $\cal G$ contains a maximal $\psi$-clique $\cal G'$ of size at least $\left(1-n^{-c\sqrt{n}}\right)2^{f_2(n)}$ for some involution $\psi \in S_n$.
    Then, $\cal G = \mathcal{G}'$.
\end{lemma}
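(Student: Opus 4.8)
The plan is to argue by contradiction: assume $\mathcal{G}\neq\mathcal{G}'$, so $\mathcal{G}\setminus\mathcal{G}'$ is nonempty, and derive a contradiction. First I would pin down the structure forced by the size hypothesis. By Item~2 of \cref{prop:extremal construction} we have $|\mathcal{G}'|\le 2^{|\mathcal{C}_2(\psi)|}$, so $2^{|\mathcal{C}_2(\psi)|}\ge\big(1-n^{-c\sqrt n}\big)2^{f_2(n)}>2^{f_2(n)-1}$; together with $|\mathcal{C}_2(\psi)|\le f_2(n)$ (\cref{claim: fixed r-sets}) this forces $|\mathcal{C}_2(\psi)|=f_2(n)$, hence $\psi$ has at most one fixed point. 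Rereading the proof of Item~2 of \cref{prop:extremal construction}, the near-equality also forces that for every pair $\{e,f\}\in\mathcal{C}_2(\psi)$, \emph{every} $G_1\in\mathcal{G}'$ contains exactly one of $e,f$; thus $\mathcal{G}'$ is a $(1-n^{-c\sqrt n})$-fraction of the complete $\psi$-clique through some base graph. Finally, since $\mathcal{G}'$ is a maximal involution clique in $\mathcal{G}$ of size certainly exceeding $2^{f_2(n)}e^{-cn}$, \cref{lemma: any non-small involution clique is large} already gives $|\mathcal{G}\setminus\mathcal{G}'|<2^{f_2(n)}e^{-cn}$; the goal is to improve ``very small'' to ``empty''.

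Next I would run the counting used to prove \cref{lemma: any non-small involution clique is large}. For $\varphi\in S_n$ set $\mathcal{E}_\varphi:=\{(G_1,G_2)\in\mathcal{G}'\times(\mathcal{G}\setminus\mathcal{G}'):G_2\overset{\varphi}{\rightarrow}G_1\}$. Since $\mathcal{G}$ is difference-isomorphic, $|\mathcal{G}'|\,|\mathcal{G}\setminus\mathcal{G}'|\le\sum_{\varphi}|\mathcal{E}_\varphi|$, while $\mathcal{E}_\psi=\emptyset$ by maximality of $\mathcal{G}'$ (via \cref{prop: involution relation}). Partition $\{\varphi\neq\psi\}$ into $P_1$ (sharing fewer than $\big(\tfrac n2-A\big)$ $2$-cycles with $\psi$, with $A=36\sqrt{2n\ln n}$), $P_2$ (the remaining $\varphi$ for which $(\varphi,\psi)$ is not exceptional), and $P_3$ (the exceptional $\varphi\neq\psi$). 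Using that $N_\varphi(G_2)\cap\mathcal{G}'$ is a $\psi$-clique, so $|N_\varphi(G_2)\cap\mathcal{G}'|^2\le e_\psi(N_\varphi(G_2))$, and applying \cref{lemma: counts of normal phi r-graph} (with $\delta=2n\ln n$) on $P_1$ and \cref{lemma: general counts for exceptional r-graph} on $P_2$, one gets $|N_\varphi(G_2)\cap\mathcal{G}'|\le 2^{f_2(n)}e^{-\Omega(n)}$ for every $G_2$; since $|P_1|\le n!$ and $|P_2|\le n^{2A}=e^{o(n)}$ by \cref{lem: number of close permutations}, the total contribution of $P_1\cup P_2$ is at most $|\mathcal{G}\setminus\mathcal{G}'|\cdot 2^{f_2(n)}\cdot e^{-\Omega(n)}$, negligible next to $|\mathcal{G}'|\,|\mathcal{G}\setminus\mathcal{G}'|\ge(1-o(1))2^{f_2(n)}|\mathcal{G}\setminus\mathcal{G}'|$. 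As $|P_3|\le e^{o(n)}$, it therefore suffices to prove a per-$\varphi$ bound on $|\mathcal{E}_\varphi|=\sum_{G_2\in\mathcal{G}\setminus\mathcal{G}'}|N_\varphi(G_2)\cap\mathcal{G}'|$, i.e.\ on $|N_\varphi(G_2)\cap\mathcal{G}'|$ for exceptional $\varphi\neq\psi$ and $G_2\in\mathcal{G}\setminus\mathcal{G}'$, that is strong enough to beat the number of exceptional $\varphi$.

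This last bound is the heart of the matter and the step I expect to be the main obstacle. Fix an exceptional $\varphi\neq\psi$ and let $D$ be the set of vertices lying in $2$-cycles of $\psi$ that are not $2$-cycles of $\varphi$, so $|D|=2k$ with $2\le k=O(\sqrt n)$; by \cref{def:exceptional}, $\varphi$ and $\psi$ agree off $D$, hence $\tilde\varphi$ and $\tilde\psi$ agree on all pairs of edges avoiding $D$. For $G_1\in N_\varphi(G_2)\cap\mathcal{G}'$, the two memberships translate (via \cref{lemma: properties of choosable pairs} and \cref{prop: involution relation}) into: $G_1$ agrees with $G_2$ off the choosable pairs of $(G_2,\varphi)$ and contains exactly one edge of each such pair; and $G_1$ contains exactly one edge of each pair of $\mathcal{C}_2(\psi)$ and matches the fixed values of $\mathcal{G}'$ on $\mathcal{C}_1(\psi)$. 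Since $\tilde\varphi=\tilde\psi$ away from $D$, these two systems of constraints clash on every pair/fixed edge not meeting $D$ unless $G_2$ is already $\psi$-compatible there; so if $N_\varphi(G_2)\cap\mathcal{G}'\neq\emptyset$ then the incompatibility of $G_2$ with $\mathcal{G}'$ (which exists, as $G_2\notin\mathcal{G}'$ and $\mathcal{G}'$ is a maximal $\psi$-clique) is localized inside $D$. One then views the union of the ``$\psi$-pair'' and ``choosable-pair'' matchings as a graph of maximum degree $2$ on $E(K_n)$, and tracks how the localized defect together with the degree condition in \cref{def:exceptional} ($d_{G_2}(x)>\tfrac n2>d_{G_2}(y)$ on the half-agreeing $2$-cycles $xy$) forces rigidity among the $\Theta(kn)$ coordinates governing edges that meet $D$, yielding $|N_\varphi(G_2)\cap\mathcal{G}'|\le 2^{f_2(n)}\cdot 2^{-\Omega(\cdot)}$ with a deficiency large enough to dominate the number $n^{O(k)}$ of exceptional $\varphi$ with $k$ disagreeing $2$-cycles; summing over $k$ gives $\sum_{\varphi\in P_3}|\mathcal{E}_\varphi|=o(2^{f_2(n)})\cdot|\mathcal{G}\setminus\mathcal{G}'|$, which with the previous paragraph contradicts $|\mathcal{G}'|\,|\mathcal{G}\setminus\mathcal{G}'|\le\sum_\varphi|\mathcal{E}_\varphi|$ as soon as $|\mathcal{G}\setminus\mathcal{G}'|\ge1$. (As a weaker starting point one can plug \cref{lem: leftover of psi clique is small} in for $P_3$, but that only gives $|\mathcal{E}_\varphi|\le 2^{2f_2(n)}2^{-\Omega(n)}$, which is off by a factor of roughly $2^{f_2(n)}$; the difficulty is precisely to exploit the near-completeness of $\mathcal{G}'$ and the defect forced by $G_2\notin\mathcal{G}'$ to get a bound of order $2^{f_2(n)}$ times a genuinely small factor, uniformly in $k$ — and this is where the exact choice of the constant $c$ in the hypothesis is used.)
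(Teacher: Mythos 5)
Your first two paragraphs are fine and match the paper's setup: the near-extremal size forces $|\mathcal{C}_2(\psi)|=f_2(n)$ and the ``exactly one edge per pair'' structure of $\mathcal{G}'$, and the contributions of the permutations that are far from $\psi$ or non-exceptional are killed by \cref{lemma: counts of normal phi r-graph}, \cref{lemma: general counts for exceptional r-graph} (via \cref{lemma: counts for exceptions}) and \cref{lem: number of close permutations}, exactly as in the paper. The gap is the third paragraph, which is not a proof but a hope, and the bound it hopes for is false. You want, for each exceptional $\varphi\neq\psi$ and each $G_2\notin\mathcal{G}'$, a per-$\varphi$ bound $|N_\varphi(G_2)\cap\mathcal{G}'|\le 2^{f_2(n)}\cdot(\text{deficiency beating } n^{O(k)})$, so that summing over $P_3$ gives $o(2^{f_2(n)})\cdot|\mathcal{G}\setminus\mathcal{G}'|$. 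But a single exceptional $\varphi$ with $k=2$ can already capture half of a full maximal clique. Concretely, let $\psi$ have $2$-cycles $u_iv_i$, let $\varphi$ agree with $\psi$ except $\varphi(u_1)=v_1,\ \varphi(v_1)=u_2,\ \varphi(u_2)=v_2,\ \varphi(v_2)=u_1$, let $\mathcal{G}'$ be the full maximal $\psi$-clique (one edge from each pair of $\mathcal{C}_2(\psi)$, no edge of $\mathcal{C}_1(\psi)$), and let $G_0$ follow the clique pattern except that $u_1v_1\in G_0$ and $G_0$ contains neither $u_1v_2$ nor $v_1u_2$, with the choices on the pairs $\{au_1,\psi(a)v_1\},\{au_2,\psi(a)v_2\}$ made ``in phase'' ($au_1\in G_0\iff au_2\in G_0$). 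Checking conditions (i)--(ii) of \cref{lemma: properties of choosable pairs}, one finds $N_\varphi(G_0)\cap\mathcal{G}'=\{G_1\in\mathcal{G}': v_1u_2\in G_1\}$, which has size $2^{f_2(n)-1}$, while $G_0\notin\mathcal{G}'$ and $(\varphi,\psi)$ is exceptional; the degree condition of \cref{lemma: counts for exceptions} can be arranged as well, so it does not rescue the argument. (If your per-$\varphi$ bound were true, it would prove the lemma assuming only $|\mathcal{G}'|\ge 2^{f_2(n)}e^{-cn}$, which is contradicted by the construction of \cref{prop: tightness of stability}.) Your ``localization inside $D$'' observation is essentially right, but the subsequent ``rigidity'' step fails: the localized defect can force as little as one pair, not $\Theta(kn)$ coordinates.

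This is exactly the difficulty the paper flags (``we can no longer achieve this by giving a uniform upper bound of $|N_\varphi(G_0)\cap\mathcal{G}'|$ for all $\varphi\in P_3$''), and its proof takes a different shape which your bipartite count $\sum_\varphi|\mathcal{E}_\varphi|$ cannot accommodate. The paper fixes a single $G_0\in\mathcal{G}\setminus\mathcal{G}'$, normalizes its violation of the clique pattern via \cref{claim:complement} into cases (a) or (b), and bounds the union $\bigcup_{\varphi\in P_3}\bigl(N_\varphi(G_0)\cap\mathcal{G}'\bigr)$ rather than each term: it shows (\cref{claim: I is small}, proved by the good-choosable-pair count of \cref{claim: count in terms of good pairs}) that the set $I$ of ``doubled/empty'' pairs of $G_0$ has size $O(\sqrt n\log n)$, defines from it a $\varphi$-independent family $\mathcal{F}$ with $|\mathcal{F}|\le(1-2^{-|J|})2^{f_2(n)}$, and proves by an induction chasing along the alternating $\varphi$-cycle that every $G\in N_\varphi(G_0)\cap\mathcal{G}'$ lies in $\mathcal{F}$, for every exceptional $\varphi$ simultaneously. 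The resulting deficiency is only $2^{-O(\sqrt n\log n)}=n^{-O(\sqrt n)}$ --- which is precisely why the hypothesis requires $|\mathcal{G}'|\ge(1-n^{-c\sqrt n})2^{f_2(n)}$ --- and this union-plus-containment mechanism, not any per-$\varphi$ smallness, is the missing heart of the argument.
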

\begin{proof}
    Let $c'$ be the constant given by \cref{lemma: general counts for exceptional r-graph}. We will show that $c=500c'$ suffices.
    Suppose for contradiction that there exists $G_0 \in \cal G \setminus \cal G'$.
    Recall the definition of $\mathcal{C}_1 := \mathcal{C}_1(\psi)$ and $\mathcal{C}_2 := \mathcal{C}_2(\psi)$ (see \cref{def: C1 C2}). 
    According to \cref{prop: involution relation}, 
    all graphs in $\mathcal{G}'$ agree on the edges in $\mathcal{C}_1$, and for every $(e,f) \in \mathcal{C}_2$ there is $a_{e,f} \in \{0,1,2\}$ such that all graphs in $\mathcal{G}'$ contain exactly $a_{e,f}$ of the edges $e,f$. 
    Hence, letting 
    $S = \{\{e,f\} : a_{e,f} = 1\}\subseteq \mc{C}_2$, we have
    $|\mathcal{G}'| \leq 2^{|S|}$. 
    Recall that \cref{claim: fixed r-sets} implies $|S|\le |\mc{C}_2|\le f_2(n)$.
    It must hold that $|S| = |\mc{C}_2|=f_2(n)$, as otherwise,
    $\abs{\cal G'}\le 2^{f_2(n)-1}$, contradicting our assumption on the size of $\mathcal{G}'$ (when $n$ is sufficiently large). 
    Namely, for every $(e,f) \in \mathcal{C}_2$, 
    each graph in $\mathcal{G}'$ contains exactly one of $e,f$. 
    
    Since $G_0 \in \cal G \setminus \cal G'$, the relation $G_0 \overset{\psi}{\rightarrow} G$ does {\bf not} hold for any $G \in \mathcal{G}'$, because otherwise $\mathcal{G}' \cup \{G_0\}$ is a $\psi$-clique by \cref{prop: involution relation}, in contradiction to the maximality of $\mathcal{G}'$.
    Hence, by \cref{prop: involution relation}, $G_0$ disagrees with the graphs in $\mathcal{G}'$ on one of the edges in $\mathcal{C}_1$, or there is $(e,f) \in \mathcal{C}_2$ such that $G_0$ contains both $e,f$ or none of them. 
    To reduce the number of possibilities to consider, we use \cref{claim:complement}, which states that $\{G^c : G \in \mathcal{G}\}$ is also difference-isomorphic. 
    So by taking the complements of the graphs in $\cal G$ (if necessary), we may assume 
    without loss of generality that one of the following holds:
    \begin{itemize}
        \item[(a)] There is $e \in \mathcal{C}_1$ such that $e \in G_0$ and $e \notin G$ for all $G \in \cal G'$;
        \item[(b)] $G_0$ agrees with all $G\in \mathcal{G}$ on all edges in $\mathcal{C}_1$, and there is a pair $(e,f) \in \mathcal{C}_2$ such that $e,f \in G_0$. 
    \end{itemize}

    Since $\mathcal{G}$ is difference isomorphic, we have $\mathcal{G}'\subseteq \bigcup_{\varphi \in S_n}N_{\varphi}(G_0) \cap \cal G'$. 
    We will obtain a contradiction by upper-bounding $\abs{\bigcup_{\varphi \in S_n}N_{\varphi}(G_0) \cap \cal G'}$ (recall that $|\mc{G}'|\ge (1-n^{-c\sqrt{n}})2^{f_2(n)}$).
    As explained above, we have $N_{\psi}(G_0) \cap \cal G' = \emptyset$. 
    Setting $A:=36\sqrt{2n\ln n}$, partition $S_n\setminus\{\psi\}$ into three sets $P_1\cup P_2\cup P_3$, as follows:
    \begin{equation}\nonumber
        \begin{aligned}
            P_1 &=\{\varphi \in S_n\setminus\{\psi\}: \varphi\text{ and }\psi\text{ share fewer than } \left(\frac{n}{2}-A\right)\text {2-cycles}\}, \\
            P_2 &= \{ \varphi\in S_n\setminus P_1: e_\psi(N_{\varphi}(G_0))\le 2^{2f_2(n)}\cdot e^{-n/100} \}, \quad 
            P_3 = S_n\setminus \left(P_1 \cup P_2 \cup \{\psi\} \right).
        \end{aligned}
    \end{equation}

    First, for the contribution of permutations $\varphi \in P_1 \cup P_2$, we will use the fact that 
    $\abs{N_{\varphi}(G_0)\cap \mc{G}'}^2 \leq e_{\psi}(N_{\varphi}(G_0))$, which holds because $\neighbor{G}{\varphi}\cap \mc{G}'\subseteq\mc{G}'$ is a $\psi$-clique. 
    For $\varphi \in P_1$, \cref{lemma: counts of normal phi r-graph} with $\delta=2n\ln n$ gives $e_\psi(N_{\varphi}(G_0)) \le 2^{2 f_2(n)}\cdot n^{-2n}$, and so $\abs{N_{\varphi}(G_0)\cap \mc{G}'} \leq  2^{f_2(n)}\cdot n^{-n}$. Therefore,
    \begin{equation} \label{eq: contribution from normal phis}
        \sum_{\varphi\in P_1}\abs{N_{\varphi}(G_0)\cap \cal G'} 
        \le n! \cdot 2^{f_2(n)}\cdot n^{-n}
        \le 2^{f_2(n)}\cdot e^{-\Omega(n)}.
    \end{equation}
    For $\varphi \in P_2$, we have $e_\psi(N_{\varphi}(G_0)) \le 2^{2 f_2(n)}\cdot e^{-n/100}$ by the definition of $P_2$.
    Therefore, $\abs{N_{\varphi}(G_0)\cap \mathcal{G}'} \le 2^{f_2(n)}\cdot e^{-n/200}$. 
    Note that $|P_2|\le n^{2A}=2^{o(n)}$ by \cref{lem: number of close permutations} with $\varphi_0=\psi$.
    Therefore,
    \begin{equation} \nonumber
        \sum_{\varphi\in P_2}\abs{N_{\varphi}(G_0)\cap \cal G'} 
        \le 2^{o(n)} \cdot 2^{f_2(n)}\cdot e^{-n/200}
        \le 2^{f_2(n)}\cdot e^{-\Omega(n)}.
    \end{equation}
    By combining this with \cref{eq: contribution from normal phis}, when $n$ is sufficiently large, we get
    \begin{equation} \label{eq: P3 contribution}
        \begin{aligned}
        \abs{\bigcup_{\varphi\in P_3}\left(N_{\varphi}(G_0)\cap \cal G'\right)}
        &\ge \abs{\cal G'} - \sum_{\varphi\in P_1\cup P_2}\abs{N_{\varphi}(G_0)\cap \cal G'} \\
        &\ge \left(1-n^{-c\sqrt{n}}\right)2^{f_2(n)} - 2^{f_2(n)}\cdot e^{-\Omega(n)} 
        > \left(1-n^{-c\sqrt{n}/2}\right)2^{f_2(n)}.
        \end{aligned}
    \end{equation}
    For the rest of the proof, we will get a contradiction to \eqref{eq: P3 contribution} by upper bounding $\bigcup_{\varphi\in P_3}\left(N_{\varphi}(G_0)\cap \cal G'\right)$.
    However, we can no longer achieve this by giving a uniform upper bound of $\abs{N_\varphi(G_0)\cap \mc{G}'}$ for all $\varphi \in P_3$, because in the example of \cref{prop: tightness of stability}, it holds that $\abs{N_\varphi(G_0)\cap \mc{G}'}=\Omega(2^{f_2(n)})$.
    Instead, we will classify a superset $\mc{F}$ of $\bigcup_{\varphi\in P_3} (N_\varphi(G_0)\cap \mc{G}')$ which has size at most $\left(1-n^{-c\sqrt{n}/2}\right)2^{f_2(n)}$. 
    
    By \cref{lemma: counts for exceptions}, if $\varphi\in P_3$ then $(\varphi,\psi)$ is exceptional, and for every 2-cycle $xy$ of $\psi$, if $\varphi(x)=y$ and $\varphi(y)\neq x$, then $d_{G_0}(x)>d_{G_0}(y)$.
    In particular, $\varphi$ shares at least $\left(\frac{n}{2}-c'\sqrt{n}\right)$ 2-cycles with $\psi$ (see \cref{def:exceptional}).
    Hence, $\abs{P_3}\le n^{2c'\sqrt{n}}$ by \cref{lem: number of close permutations} with $\varphi_0=\psi$ and $A=c'\sqrt{n}$.
    
    Let $u_1v_1,\dots,u_{\ell}v_{\ell}$ be all the 2-cycles of $\psi$. Then $u_iv_i \in \mathcal{C}_1$ for every $i \in [\ell]$. 
    Write $U:=\{u_1,\dots,u_{\ell}\}$, $V:=\{v_1,\dots,v_{\ell}\}$, and note that every point in $[n] \setminus (U \cup V)$ is a fixed point of $\psi$. 
    Without loss of generality, assume $d_{G_0}(u_i)\ge d_{G_0}(v_i)$ for all $1 \le i \le \ell$.
    As $(\varphi,\psi)$ is exceptional, we know that $\varphi$ and $\psi$ share all 1-cycles. Observe also that $\varphi(u_i)=v_i$ for all $i\in\{1,\dots,\ell\}$, because by Item 3 in \cref{def:exceptional} we have $\varphi(u_i) = v_i$ or $\varphi(v_i) = u_i$, and it is impossible to have $\varphi(v_i) = u_i$ and $\varphi(u_i) \neq v_i$, since $d_{G_0}(u_i)\ge d_{G_0}(v_i)$.
    Define the following set, which depends only on the graph $G_0$ but not on any particular $\varphi\in P_3$, by
    $$I:=\left\{(i,j)\in\binom{[\ell]}{2}: u_iv_j,v_iu_j \in G_0 \text{ or } u_iv_j,v_iu_j \notin G_0 \right\}.$$
    \begin{claim}\label{claim: I is small}
        $\abs{I} < \frac{c}{2}\sqrt{n}\ln n$.
    \end{claim}
    \begin{proof}
        Fix any $\varphi \in P_3$.
        Let $m$ be the number of choosable pairs of $(G_0,\varphi)$ and $m_g$ be the number of good choosable pairs of $(G_0,\varphi,\psi)$.
        By Item 2 in \cref{def:exceptional}, there are at least $\left(\frac{n}{2}-c'\sqrt{n}\right)$ indices $i\in [\ell]$ such that $u_iv_i$ is a 2-cycle of $\varphi$.
        Note that for such $i$, we have $\varphi(u_iv_i) = u_iv_i$, and thus $u_iv_i$ is not contained in any choosable pair of $(G_0,\varphi)$ (see \cref{def: choosable pairs}).
        Hence, $2m \le \binom{n}{2}-\frac{n}{2}+c'\sqrt{n}$.
        In addition, for every $1 \leq i < j \leq \ell$, we have $\psi(u_iv_j) = v_iu_j$ and $\psi(v_iu_j) = u_iv_j$. Therefore, if one of the edges $u_iv_j, v_iu_j$ belongs to some good choosable pair of $(G_0,\varphi,\psi)$, then this pair must be $\{u_iv_j,v_iu_j\}$ (see \cref{def: good choosable pair}).
        Now, if $(i,j) \in I$, then $G_0$ contains both or none of the edges $u_iv_j,v_iu_j$, meaning that $\{u_iv_j,v_iu_j\}$ cannot be a choosable pair (see \cref{def: choosable pairs}).
        Hence, for every $(i,j)\in I$, the edges $u_iv_j$ and $v_iu_j$ are not contained in any good choosable pair.
        Therefore, $2m_g \le \binom{n}{2}-\frac{n}{2}+c'\sqrt{n}-2\abs{I}$.
        By \cref{eq: practical m m_g bound} with $x=(\frac{n}{2}-c'\sqrt{n})/2$ and $y=(\frac{n}{2}-c'\sqrt{n})/2+\abs{I}$, it holds that 
        \begin{equation} \nonumber
            e_\psi(N_\varphi(G)) 
            \le 2^{\binom{n}{2}-\frac{n}{2}+c'\sqrt{n}}\cdot e^{-\abs{I}/40}
            \le 2^{2f_2(n)+c'\sqrt{n}}\cdot e^{-\abs{I}/40},
        \end{equation}
        where we used $2f_2(n)\ge\binom{n}{2}-\frac{n}{2}$ (see \cref{eq: f_r(n)}).
        Therefore, $\abs{N_{\varphi}(G_0)\cap \cal G'} \leq 2^{f_2(n) + c'\sqrt{n}/2} \cdot e^{-|I|/80}$. 
        Now, if $\abs{I} \geq \frac{c}{2}\sqrt{n}\ln n=250c'\sqrt{n}\ln n$, then, using $|P_3| \leq n^{2 c'\sqrt{n}}$, we get:
        \begin{equation} \nonumber
            \abs{\bigcup_{\varphi\in P_3}\left(N_{\varphi}(G_0)\cap \cal G'\right)}
            \le \sum_{\varphi\in P_3}\abs{N_{\varphi}(G_0)\cap \cal G'} 
            \le n^{2c'\sqrt{n}} \cdot 
            2^{f_2(n) + c'\sqrt{n}/2} \cdot e^{-|I|/80}
            < 2^{f_2(n)}/2,
        \end{equation}
        when $n$ is sufficiently large.
        This contradicts \cref{eq: P3 contribution}, so $\abs{I} < 250c'\sqrt{n}\ln n$.
    \end{proof}
    \noindent
    As explained in the beginning, we may assume that (a) or (b) above holds. We now consider these two cases. 
    \paragraph{Case 1: (a) holds.} Namely, there exists $e \in \mathcal{C}_1$ with $e \in G_0$ and $e \notin G$ for all $G \in \mc{G}'$.
        This means $\psi(e) = e$, so either $e$ connects two fixed points of $\psi$, or $e = u_iv_i$ for some $1 \leq i \leq \ell$. 
        We first rule out the former case. 
        Indeed, fix any $\varphi \in P_3$ with $N_{\varphi}(G_0) \cap \mathcal{G}' \neq \emptyset$; such a $\varphi$ exists by \cref{eq: P3 contribution}. 
        By \cref{def:exceptional}, $\varphi$ and $\psi$ share all fixed points. 
        Thus, if $e$ connects two fixed points of $\psi$, then $\varphi(e) = e$. However, taking any $G \in N_{\varphi}(G_0) \cap \mathcal{G}'$, we get that $e \in G_0 \setminus G$ (by the choice of $e$) and hence $e = \varphi(e) \in G \setminus G_0$ (as $G_0 \overset{\varphi}{\rightarrow} G$), a contradiction. 
        Therefore, $e=u_iv_i$ for some $1\le i\le \ell$. 
        Note that $e$ is not dependent on any particular choice of $\varphi \in P_3$.
        We may, without loss of generality, assume that $e=u_1v_1$.
        Now, let 
            \begin{equation} \nonumber
                J=\{1 < i \leq \ell : u_1v_i,v_1u_i \notin G_0\}, \quad 
                {\cal F}=\{G \in {\cal G'}: \exists j \in J.\;  v_1u_j\in G \}.
            \end{equation}
            By \cref{claim: I is small}, $\abs{J}\le\abs{I}\le \frac{c}{2}\sqrt{n}\ln n$, since $(1,j) \in I$ for every $j \in J$.
            Recall that for every pair of edges $(e,f) \in \mathcal{C}_2$, every graph in $\mathcal{G}'$ contains exactly one of $e,f$. 
            In particular, this holds for $(e,f) = (u_1v_j,v_1u_j) \in \mathcal{C}_2$ for every $j\in J$.
            Then, when restricting to the edges in $\{u_1v_j,v_1u_j : j \in J\}$, the graphs $G \in \mathcal{F}$ have at most $2^{|J|}-1$ choices, because the choice $u_1v_j \in G$ (and hence $v_1u_j \notin G$) for every $j \in J$ is impossible. 
            Hence, using that $|\mathcal{C}_2| \leq f_2(n)$ (see \cref{claim: fixed r-sets}), we get that $\abs{\cal F}\le \left(2^{\abs{J}}-1 \right) \cdot 2^{\abs{\mc{C}_2}-\abs{J}}\le \left( 1-2^{-\abs{J}} \right) \cdot 2^{f_2(n)}$.
            
            Next, it suffices to show $\bigcup_{\varphi\in P_3}(N_{\varphi}(G_0)\cap \cal G')\subseteq {\cal F}$.  
            Indeed, using $|J| \leq \frac{c}{2}\sqrt{n} \ln{n}$, this implies 
            \begin{equation*} \label{eq: contradiction binary strings}
                \abs{\bigcup_{\varphi\in P_3}\left(N_{\varphi}(G_0)\cap \cal G'\right)}
                \le \abs{\cal F} \le \left( 1-2^{-\abs{J}} \right) \cdot 2^{f_2(n)}
                \leq \left(1-n^{-c\sqrt{n}/2}\right)2^{f_2(n)},
            \end{equation*}
            contradicting \cref{eq: P3 contribution}.
            
            To show that $\bigcup_{\varphi\in P_3}(N_{\varphi}(G_0) \cap \cal G')\subseteq {\cal F}$, fix any $\varphi \in P_3$ and suppose, for the sake of contradiction, that there exists some $G \in N_{\varphi}(G_0) \cap \cal G'$ with $G \notin \mathcal{F}$. 
            Let $S$ be the cycle of $\varphi$ containing $u_1$. 
            Recall that $\varphi(u_i)=v_i \in V$ for all $i \in \{1,\dots,\ell\}$ and that every $a\in[n]$ is a fixed point of $\psi$ (and hence also of $\varphi$ as they share all fixed points) if and only if $a\notin U\cup V$.
            Then, $\varphi(U)=V$ implies $\varphi(V)=U$.
            It follows that $S$ alternates between $U$ and $V$.
            To ease notation, let us assume without loss of generality that $S=(u_1,v_1,u_2,v_2,\dots,u_k,v_k)$ for some $k$, where $\varphi(v_i)=u_{i+1}$ for $i\in\{1,\dots,k-1\}$ and $\varphi(v_k)=u_1$.           
            We will show by induction that $u_1v_i \in G_0\setminus G$ for every $i \in \{1,\dots,k\}$.
            The base case $i=1$ holds by the assumption of Case 1.
            Suppose now that $u_1v_{i-1} \in G_0 \setminus G$ for some $2\le i \leq k$.
            Then, $v_1u_i=\varphi(u_1v_{i-1})\in G\setminus G_0$, as $G_0 \overset{\varphi}{\rightarrow} G$.
            Recall that $G$ contains exactly one of the edges $u_1v_i,v_1u_i$, as $G \in \mathcal{G}'$ and $(u_1v_i,v_1u_i) \in \mathcal{C}_2$.  
            Hence, $u_1v_i \notin G$.
            Now, if $u_1v_i\notin G_0$, 
            then $i \in J$, implying that $G\in \mc{F}$ because $v_1u_i \in G$. This is a contradiction, as we assumed $G \notin \mathcal{F}$.
            This means $u_1v_i$ must be in $G_0$, so $u_1v_i\in G_0\setminus G$, establishing the induction step.
            Taking $i=k$, we have $u_1v_k \in G_0\setminus G$.
            So, $u_1v_1 = \varphi(u_1v_k)\in G \setminus G_0$, which is impossible as $u_1v_1=e\in G_0$ by the assumption of Case 1.
            This proves $\bigcup_{\varphi\in P_3}(N_{\varphi}(G_0) \cap \cal G')\subseteq {\cal F}$ and completes the proof of Case 1. 

        \paragraph{Case 2: (b) holds.} Namely, $G_0$ agrees with the graphs in $\mathcal{G}$ on all edges in $\mathcal{C}_1$, and there is a pair $(e,f) \in \mathcal{C}_2$ such that $e,f \in G_0$.
        Recall that pairs $(e,f) \in \mathcal{C}_2$ satisfy $\psi(e) = f$, $\psi(f) = e$ and $e \neq f$. Hence, there are three possibilities for $e,f$: 
        (i) $\{e,f\}=\{au_i,av_i\}$ for some fixed point $a$ of $\psi$ and some $i\in\{1,\dots,\ell\}$; (ii) $\{e,f\}=\{u_iu_j,v_iv_j\}$ for some $1 \leq i < j \leq \ell$; and (iii) $\{e,f\}=\{u_iv_j,v_iu_j\}$ for some $1 \leq i < j \leq \ell$. We now consider two subcases. 
        \paragraph{Case 2.1. $\{e,f\}$ is of type (i) or (ii).} 
        Without loss of generality, let us assume that if $\{e,f\}$ is of type (i) then $e = au_i, f = av_i$, and if $\{e,f\}$ is of type (ii) then $e = u_iu_j, f=v_iv_j$. Observe that in both cases, we have $\varphi(e) = f$ for every $\varphi \in P_3$, because $\varphi(u_i) = v_i$ and $\varphi,\psi$ share all fixed points. 
        We claim that $e \in G$ for every $\varphi \in P_3$ and $G \in N_{\varphi}(G_0)$. Indeed, if $e \notin G$ then $e \in G_0 \setminus G$ and hence $f = \varphi(e) \in G \setminus G_0$ (as $G_0 \overset{\varphi}{\rightarrow} G$), contradicting $f \in G_0$. 
        In particular, this means that all graphs in $\bigcup_{\varphi\in P_3}\left(N_{\varphi}(G_0) \cap {\cal G'}\right)$ must contain $e$ but not $f$.
        Recall also that the graphs in $\mathcal{G}'$ have at most two choices for every pair in $\mathcal{C}_2$, and $|\mathcal{C}_2| \leq f_2(n)$ (by \cref{claim: fixed r-sets}). 
        It follows that $\abs{\bigcup_{\varphi\in P_3}\left(N_{\varphi}(G_0)\cap {\cal G'}\right)} \le 2^{f_2(n)-1}=2^{f_2(n)}/2$, contradicting \cref{eq: P3 contribution}.
        
            
        \paragraph{Case 2.2. $\{e,f\}$ is of type (iii).} 
            In this case $\{e,f\} = \{u_iv_j,v_iu_j\}$ for some $1 \leq i < j \leq \ell$. 
            Note that $e,f$ are not dependent on any particular choice of $\varphi \in P_3$.
            Without loss of generality, let us assume that $i=1,j=2$ and $e = u_1v_2, f = v_1u_2$. 
            Define 
            \begin{equation} \nonumber
                J=\{(i,j): 1 \le i \le 2 < j \le \ell, u_iv_j, v_iu_j \notin G_0\}, \quad 
                {\cal F}=\{G \in {\cal G'}: \exists (i,j)\in J. \; v_iu_j\in G \}.
            \end{equation}
            We proceed similarly to Case 1. First, note that $J \subseteq I$, so $|J| \leq |I| \leq \frac{c}{2}\sqrt{n}\ln n$ by \cref{claim: I is small}. As in Case 1, the graphs in $\mathcal{F}$ have at most $2^{|J|}-1$ choices on the edges in the set $\{u_iv_j,v_iu_j : (i,j) \in J\}$. Hence,
            $\abs{\cal F} \le \left(2^{\abs{J}}-1 \right) \cdot 2^{\abs{C_2}-\abs{J}}= \left( 1-2^{-\abs{J}} \right) \cdot 2^{f_2(n)}$.
            Now, it suffices to show that $\bigcup_{\varphi\in P_3}(N_{\varphi}(G_0) \cap \cal G')\subseteq \cal F$, as then, $\abs{\bigcup_{\varphi\in P_3}(N_{\varphi}(G_0) \cap \cal G')} \le \abs{\mc{F}}\le (1-n^{-c\sqrt{n}/2})2^{f_2(n)}$, contradicting \cref{eq: P3 contribution}.

            So let us assume, for the sake of contradiction, that there exists $\varphi \in P_3$ and $G \in N_{\varphi}(G_0) \cap \mathcal{G}'$ such that $G \notin \mathcal{F}$. 
            We know that every graph in $\cal G'$ contains exactly one of the edges $e=u_1v_2$ and $f=v_1u_2$.
            Assume that $v_1u_2 \in G$ and $u_1v_2 \notin G$; the other case is similar by swapping the roles of $1,2$ and of $e,f$.
            As in Case 1, the cycle in $\varphi$ containing $u_2$ alternates between $U$ and $V$.
            Let us denote this cycle by $(u_{i_1},v_{i_1},\dots,u_{i_k},v_{i_k})$, where $u_{i_1} = u_2$ and $v_{i_1} = \varphi(u_2)=v_2$ (that is, $i_1 = 2$). 
            Note that we do not make any assumption whether $u_1$ is in this cycle or not.
            We now show by induction that $u_1v_{i_j} \in G_0\setminus G$ for every $1 \leq j \leq k$.
            The base case $j=1$ holds by our assumptions that $u_1v_{i_1} = u_1v_2 = e \in G_0$ and $u_1v_2 \notin G$. 
            Suppose now that we proved that $u_1v_{i_{j-1}} \in G_0 \setminus G$ for some $2 \leq j \leq k$.
            Then, $v_1u_{i_{j}}={\varphi}(u_1v_{i_{j-1}}) \in G \setminus G_0$, because $G_0 \overset{\varphi}{\rightarrow} G$.
            As $(v_1u_{i_{j}}, u_1v_{i_{j}}) \in \mathcal{C}_2$,
            every graph in $\cal G'$ contains exactly one of the edges $v_1u_{i_{j}}, u_1v_{i_{j}}$.
            In particular, $u_1v_{i_{j}} \notin G$.
            Observe that $i_j \neq 2$ (because $i_1 = 2$ and $j \neq 1$). Also, $i_j \neq 1$, because otherwise we would have $v_1u_1 = v_1u_{i_j} \in G \setminus G_0$, in contradiction to the assumption that $G_0$ and $G$ agree on all edges in $\mathcal{C}_1$. So $i_j > 2$. 
            Now, if $u_1v_{i_{j}} \notin G_0$, 
            then $(1,i_{j})\in J$ and hence $G \in \cal F$ (as $v_1u_{i_j} \in G$), and this contradicts our assumption $G \notin \mathcal{F}$.
            This means $u_1v_{i_{j}} \in G_0$ must hold.
            By the former discussion, we proved that $u_1v_{i_{j}} \in G_0 \setminus G$, completing the induction step.
            Now, taking $j = k$, we get $u_1v_{i_k} \in G_0 \setminus G$.
            Therefore, $f=v_1u_2=v_1u_{i_1}=\varphi(u_1v_{i_k})\in G \setminus G_0$, contradicting our assumption that $f \in G_0$.
            This proves that indeed $\bigcup_{\varphi\in P_3}(N_{\varphi}(G_0) \cap \cal G')\subseteq {\cal F}$, completing the proof in Case 2 and hence the entire proof of the lemma. 
\end{proof}

\begin{proof}[Proof of \cref{theorem: stablility} for $r=2$]
    Let $\cal G'\subseteq \cal G$ be a largest involution clique in $\mathcal{G}$.
    By assumption, $\mathcal{G}$ is not an involution clique, hence $\mathcal{G} \neq\mc{G}'$.
    We consider the following three cases depending on the size of $\cal G'$. 
    Let $c_1$ and $c_2$ be the constants from \cref{lemma: any non-small involution clique is large,lemma: no very large involution clique}, respectively.
    \begin{description}
        \item[Case 1:] $\abs{\cal G'}\ge\left(1-n^{-c_2\sqrt{n}}\right)2^{f_2(n)}$.
            Then, by \cref{lemma: no very large involution clique}, $\cal G$ itself is an involution clique, contradicting our assumption.
        \item[Case 2:] $2^{f_2(n)}\cdot e^{-c_1n} \leq \abs{\cal G'} \le \left(1-n^{-c_2\sqrt{n}}\right)2^{f_2(n)}$.
        By \cref{lemma: any non-small involution clique is large},  $\abs{{\cal G}\setminus{\cal G'}} \leq 2^{f_2(n)}\cdot e^{-c_1n}$.
        When $n$ is sufficiently large, the desired inequality holds as 
        $$
            \abs{\cal G}
            =\abs{\cal G'}+\abs{{\cal G}\setminus{\cal G'}}
            \le \left(1-n^{-c_2\sqrt{n}}+e^{-c_1n}\right)\cdot 2^{f_2(n)}
            \leq \left(1-n^{-2c_2\sqrt{n}}\right)\cdot 2^{f_2(n)}.
        $$
        \item[Case 3:] $\abs{\cal G'} < 2^{f_2(n)}\cdot e^{-c_1n}$.
        Then \cref{theorem without large involution clique} with $\delta=\min(c_1n,n/250)$ implies $\abs{\mc{G}} = 2^{f_2(n)}\cdot e^{-\Omega(n)}$, as desired.
    \end{description}
\end{proof}

\section{Concluding remarks}\label{sec:concluding}
We showed that if $n \geq n_0(r)$, then $2^{f_r(n)}$ is the largest size of a difference-isomorphic $r$-graph family on $[n]$. 
It would be interesting to determine how large $n_0(r)$ should be. 
Let us observe that for $n=r+1$, our result does not hold. Indeed, let $\mathcal{G}$ be the family consisting of all $r$-graphs on $[r+1]$ with exactly $\floor{(r+1)/2}$ edges. 
We claim that $\mathcal{G}$ is difference isomorphic. 
To this end, for every $G \in \mathcal{G}$, we consider the set $S(G) := \{i : [r+1]\setminus \{i\} \in E(G)\}$, i.e., $S(G)$ is the set of complements of edges in $G$, which are singletons. 
Then $S(G)$ is a subset of $[r+1]$ of size $\ceil{(r+1)/2}$. 
For every $G_1,G_2 \in \mathcal{G}$, set $S_1 = S(G_1), S_2 = S(G_2)$, and take a permutation $\varphi \in S_{[r+1]}$ with $\varphi(S_1\setminus S_2)=S_2\setminus S_1$ (we know $\abs{S_1\setminus S_2}=\abs{S_2\setminus S_1}$ because $\abs{S_1}=\abs{S_2}$). 
Now, let $e \in G_1 \setminus G_2$, so $e = [r+1] \setminus \{i\}$ where $i \in S_1 \setminus S_2$. 
Then $j := \varphi(i) \in S_2 \setminus S_1$, so $\varphi(e) = [r+1] \setminus \varphi(i) = [r+1] \setminus \{j\} \in G_2 \setminus G_1$. 
This shows that $\varphi(G_1\setminus G_2)\subseteq G_2\setminus G_1$.
Using that $|G_1\setminus G_2|=|G_2\setminus G_1|$, it holds that $\varphi(G_1\setminus G_2)=G_2\setminus G_1$. 
We see that $\cal G$ is a difference-isomorphic family of size $\binom{r+1}{\floor{(r+1)/2}}$.
It is easy to check that $\binom{r+1}{\floor{(r+1)/2}}> 2^{f_r(r+1)}$ for all $r \ge 2$, so \cref{thm:tight} does not hold when $n=r+1$. 


The argument from the previous paragraph, of considering the complements of edges, is more general; it shows that if $\mathcal{G}$ is a difference-isomorphic family of $r$-graphs on $[n]$, then $\{\{[n] \setminus e : e \in G\} : G \in \mathcal{G}\}$ is a difference isomorphic family of $(n-r)$-uniform graphs on $[n]$. Hence, letting $F_r(n)$ denote the size of the largest difference-isomorphic family of $r$-graphs on $[n]$, we have that $F_r(n)=F_{n-r}(n)$. Note that $F_1(n)=\binom{n}{\floor{n/2}}$ (``1-uniform" hypergraphs are simply subsets, and two subsets and difference isomorphic if and only if they have the same size), and this recovers the fact that $F_r(r+1)=F_1(r+1)=\binom{r+1}{\floor{(r+1)/2}}$ from the previous paragraph. 
Moreover, one can check from \cref{eq: f_r(n)} that $f_r(n) = f_{n-r}(n)$. Hence, whenever we have $F_r(n) = 2^{f_r(n)}$, we also have $F_{n-r}(n) = 2^{f_{n-r}(n)}$.
Therefore, by \cref{thm:tight}, we have $F_r(r+2)=F_2(r+2)=2^{f_2(r+2)} = 2^{f_r(r+2)}$ for sufficiently large $r$. It would be interesting to understand in general for which values of $n$ (depending on $r$) it holds that $F_r(n) = 2^{f_r(n)}$.

\bibliographystyle{plain} 

\appendix
\section{Tightness for the stability result}\label{sec:stability tightness}

\begin{proposition}\label{prop: tightness of stability}
    Let $r\ge 2$ and $n$ be sufficiently large in terms of $r$.
    Then, there exists a difference-isomorphic family $\mc{G}$ of $r$-graphs on $[n]$ such that $\mc{G}$ is not an involution clique, and $\abs{G}=2^{f_r(n)-O(n^{r-2})}$.
\end{proposition}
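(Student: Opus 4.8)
The plan is to build a family that agrees, on all but $\Theta(n^{r-2})$ of its edge‑coordinates, with the extremal involution clique, and is perturbed on the small remaining part in a way that destroys the ``single involution'' structure but keeps difference‑isomorphism. Concretely, I would fix two distinguished vertices $a,b$ and a fixed‑point‑free involution $\psi$ on $[n]$ (adjusting for parity) having $(a\,b)$ as one of its $2$‑cycles; write $\rho=\psi|_{[n]\setminus\{a,b\}}$ and split $\binom{[n]}{r}$ into the bulk $E_0$ of edges \emph{not} containing both $a,b$ (which is $\psi$‑invariant) and the small set $F$ of the $\binom{n-2}{r-2}=\Theta(n^{r-2})$ edges containing $\{a,b\}$. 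On $E_0$ I would impose the usual $\psi$‑involution‑clique conditions (contain exactly one edge of each $2$‑cycle of $\tilde\psi$ inside $E_0$, no edge of $\mathcal C_1(\psi)\cap E_0$), which already gives $2^{f_r(n)-\Theta(n^{r-2})}$ free choices. On $F$ I would install a difference‑isomorphic family of ``link'' configurations $\{S:\{a,b\}\cup S\in G\}$ which is \emph{not} an involution clique and whose pairwise symmetric differences are extremely rigid (e.g.\ single $(r-2)$‑edges).

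The subtle point is that the naive product of these two families is \emph{not} difference‑isomorphic: a vertex permutation that witnesses a bulk difference is essentially forced to be $\psi$, and $\psi$ cannot also correct the $F$‑difference unless the $F$‑family were a $\rho$‑clique. To repair this I would couple the choice on $F$ with the choice on the $O(n^{r-2})$ many $\tilde\psi$‑orbits that meet or sit ``next to'' $F$: for each admissible $F$‑configuration, the configuration on those few orbits is slightly restricted so that a single permutation of the form ``$\psi$ post‑composed with a small permutation of $[n]\setminus\{a,b\}$'' simultaneously witnesses both halves of the difference of every pair.

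With such a family in hand, the verification splits into three parts. (i) Difference‑isomorphism: for a pair $G_1,G_2$ that agree on $F$, the permutation $\psi$ works by \cref{prop: involution relation}; for a pair differing on $F$, one uses the coupling to exhibit the required ``$\psi$ plus small correction'' witness. (ii) Size: the bulk contributes $2^{f_r(n)-\Theta(n^{r-2})}$, the restriction on the coupled orbits and the $F$‑family each cost at most a $2^{O(n^{r-2})}$ factor, so $|\mathcal G|=2^{f_r(n)-O(n^{r-2})}$; for $r=2$ this reads $2^{f_2(n)-O(1)}=\Theta(2^{f_2(n)})$. (iii) Not an involution clique: if some involution $\chi$ witnessed all pairs, then since the bulk is a full $\psi$‑clique on $\binom{n}{r}-\Theta(n^{r-2})$ coordinates, a maximal‑sub‑clique argument forces $\chi$ to agree with $\psi$ on almost all of $E_0$ and hence $\chi=\psi$; but then the $F$‑part of $\mathcal G$ would be a $\rho$‑involution‑clique, contradicting the choice of the $F$‑family.

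I expect the main obstacle to be the coupling in the construction together with the ``differ on $F$'' case of (i): one must show that the small correction to $\psi$ can be chosen consistently, pair by pair, without either shrinking the family below $2^{f_r(n)-O(n^{r-2})}$ or letting a single involution serve all pairs. The $O(n^{r-2})$ loss is intrinsic here — by \cref{theorem: stablility} it cannot be improved to $o(n^{r-2})$ once $r\ge 3$ — and the bookkeeping is slightly different for $r=2$ (where $F$ is a single edge and only $O(1)$ bulk orbits need to be coupled) than for $r\ge 3$.
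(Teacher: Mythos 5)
Your proposal has the right target (agree with an extremal involution clique except on $O(n^{r-2})$ edge-coordinates) but it leaves the actual construction unbuilt at exactly the point where all the difficulty sits, and the specific shape you propose for the repair does not survive scrutiny. You want every graph in the family to carry one of many different link configurations on the set $F$ of edges containing $\{a,b\}$, with pairs that differ on $F$ witnessed by ``$\psi$ composed with a small permutation $\sigma$ of $[n]\setminus\{a,b\}$'', after restricting only the $O(n^{r-2})$ bulk orbits ``next to'' $F$. The obstruction is quantitative: if $\sigma$ moves any vertex $v\in[n]\setminus\{a,b\}$, then the witness acts differently from $\psi$ on the $\Theta(n^{r-1})$ free bulk orbits containing $v$, and since a typical pair of your graphs differs on a constant fraction of those orbits, the witness misroutes their difference edges unless the coupling also constrains those $\Theta(n^{r-1})$ orbits -- which costs a factor $2^{-\Theta(n^{r-1})}$, far below the claimed $2^{f_r(n)-O(n^{r-2})}$. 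So the correction can only move vertices inside a bounded set on which essentially all incident orbits are pinned; your coupling as described does not arrange this, and you yourself flag the pair-by-pair consistency of the corrections as an unresolved obstacle. There is also a degenerate issue at $r=2$, where $F$ is the single edge $\{a,b\}$ and a ``difference-isomorphic, non-involution-clique family of links'' does not exist as such; the case is waved at but not handled.

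The paper's construction avoids all of this with a much simpler device, and the difference is instructive. It takes a single near-extremal $\psi$-clique $\mathcal{G}'$ together with \emph{one} additional graph $G_0$, and a single fixed permutation $\varphi$ that equals $\psi$ except for a $4$-cycle on $\{1,2,3,4\}$; all graphs are pinned on the $O(n^{r-2})$ orbits of edges having at least two vertices in $\{1,2,3,4\}$, and $G_0$ differs from the clique members only in that it contains a fixed edge $e_0\in\mathcal{C}_1(\psi)$ while they contain $f_0=\varphi(e_0)$. Then $G_0 \overset{\varphi}{\rightarrow} G$ for \emph{every} $G\in\mathcal{G}'$ is checked once via \cref{lemma: properties of choosable pairs} (no pair-by-pair corrections are needed, because only one graph sits outside the clique), pairs inside $\mathcal{G}'$ are handled by \cref{prop: involution relation}, and ``not an involution clique'' follows because any common involution would have to be $\psi$ by \cref{lemma: general counts for exceptional r-graph}, yet $G_0$ and $\mathcal{G}'$ disagree on the $\tilde\psi$-fixed edge $e_0$. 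Your plan, by contrast, tries to place many graphs outside the clique simultaneously, which is precisely what creates the consistency problem; without a concrete coupling that resolves it (and none is given), the proposal is a sketch with a genuine gap rather than a proof, even though the intended size loss $O(n^{r-2})$ and the appeal to \cref{theorem: stablility} for its optimality are correct in spirit.
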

\begin{proof}
    Let $\psi\in S_n$ be the involution such that 
    $\psi(2i-1) = 2i, \psi(2i) = 2i-1$
    for all $1 \le i \le \floor{\frac{n-1}{2}}$, and that $\psi(i)=i$ for $2\floor{\frac{n-1}{2}}<i\le n$.
    Let $a$ be the number of fixed points of $\psi$. We know $a=1$ if $n$ is odd while $a=2$ if $n$ is even.
    By \cref{eq:fixed points of tilde-psi}, 
    $$|\mathcal{C}_1(\psi)| = \sum_{\substack{0 \leq i \leq a \\ i \equiv r \; \text{mod } 2}} 
    \binom{a}{i}\binom{(n-a)/2}{(r-i)/2}$$
    A similar computation as in \cref{claim: fixed r-sets} shows that $\abs{\mc{C}_1(\psi)}=\binom{\floor{n/2}}{\floor{r/2}}$ if $n$ is odd; $\abs{\mc{C}_1(\psi)}=2\binom{n/2-1}{\floor{r/2}}$ if $n$ is even and $r$ is odd; $\abs{\mc{C}_1(\psi)}=\binom{n/2-1}{r/2}+\binom{n/2-1}{r/2-1}$ if $n$ and $r$ are even.
    Hence, $\abs{\mc{C}_2(\psi)}=f_2(n)$ if $r = 2$ and $\abs{\mc{C}_2(\psi)}=f_r(n)-O(n^{\floor{r/2}})$ if $r \ge 3$.
    
    Let $\varphi \in S_n$ be a permutation such that 
    $\varphi(1)=2,\varphi(2)=3,\varphi(3)=4,\varphi(4)=1$ and $\varphi(i)=\psi(i)$ for all $5 \le i \le n$.
    Define $e_0=\{3,4,\dots,r+2\}$ if $r$ is even and $e_0=\{3,4,\dots,r+1,n\}$ if $r$ is odd.
    By definition, $e_0 \in \mc{C}_1(\psi)$.
    Let $f_0=\varphi(e_0)$.
    Then, $f_0\cap\{1,2,3,4\}=\{1,4\}$ and $f_0$ is contained in some 2-cycle of $\tilde{\psi}$.
    Pick any $r$-graph $G_0$ on $[n]$ with the following properties.
    \begin{itemize}
        \item For every $\{e,f\}\in\mc{C}_2({\psi})$ with $e \subseteq\{5,\dots,n\}$ (so $f\subseteq \{5,\dots,n\}$), $G_0$ contains one between them;
        \item for every $\{e,f\}\in\mc{C}_2({\psi})$ with $\abs{e\cap \{ 1,\dots,4\}}=1$ (so $\abs{f\cap \{ 1,\dots,4\}}=1$), $G_0$ contains $e$ if $e \cap\{1,3\}\neq\emptyset$ while $G_0$ contains $f$ if $f\cap \{1,3\}\neq\emptyset$ (clearly, $G_0$ contains one of them);
        \item Among all edges incident to at least 2 vertices in $\{1,2,3,4\}$, only $e_0\in G_0$;
        \item $G$ contains no edges in $\mc{C}_1({\psi})\setminus\{e_0\}$.
    \end{itemize}
    Consider a family $\cal G'$ of $r$-graphs on $[n]$, including all $r$-graphs $G$ such that 
    \begin{itemize}
        \item for every $\{e,f\}\in\mc{C}_2({\psi})$ with $e \subseteq\{5,\dots,n\}$ (so $f\subseteq \{5,\dots,n\}$), $G$ contains one between them;
        \item for every $\{e,f\}\in\mc{C}_2({\psi})$ with $\abs{e\cap \{ 1,\dots,4\}}=1$ (so $\abs{f\cap \{ 1,\dots,4\}}=1$), $G_0$ contains $e$ if $e \cap\{1,3\}\neq\emptyset$ while $G$ contains $f$ if $f\cap \{1,3\}\neq\emptyset$;
        \item Among all edges incident to at least 2 vertices in $\{1,2,3,4\}$, only $f_0\in G$;
        \item $G$ contains no edges in $\mc{C}_1({\psi})$
    \end{itemize}
    It is easy to see that every $G\in \mc{G}'$ contains the same number of edges in every 2-cycle $\{e,f\}\in\mc{C}_2({\psi})$ and contains none of the edges in $\mc{C}_1(\psi)$, so \cref{prop: involution relation} implies $\mc{G}'$ forms a $\psi$-clique.
    In addition, the number of 2-cycles that $G\in\mc{G}'$ has a choice is at least $\mc{C}_2(\psi)-\binom{4}{2}\binom{n}{r-2}$.
    If $r=2$, $\abs{\mc{G}'}\ge 2^{f_2(n)-6}$; if $r \ge 3$, $\abs{\mc{G}'}\ge 2^{\mc{C}_2(\psi)-\binom{4}{2}\binom{n}{r-2}}=2^{f_r(n)-O(n^{\floor{r/2}})-O(n^{r-2})}=2^{f_r(n)-O(n^{r-2})}$.
    So, $\abs{\mc{G}'}\ge 2^{f_r(n)-O(n^{r-2})}$ holds unconditionally.

    Now, consider $\mc{G}:=\mc{G}'\cup\{G_0\}$.
    We will complete the proof by showing that $\mc{G}:=\mc{G}'\cup\{G_0\}$ is a difference-isomorphic graph family of size $2^{f_r(n)-O(n^{r-2})}$ such that $\mc{G}$ is not an involution clique.
    First, we claim that $\mc{G}$ is difference-isomorphic.
    It suffices to show that $G_0 \overset{\varphi}{\rightarrow} G$ for all $G \in \mc{G}'$.
    Indeed, $(e,f)\in\binom{[n]}{r}\times\binom{[n]}{r}$ is a choosable pair for $(G,\varphi)$ if and only if one of the following holds: $\{e,f\}\in\mc{C}_2(\psi)$ and $e\subseteq \{5,\dots,n\}$; $\{e,f\}\in\mc{C}_2(\psi)$ and $e\cap\{1,2,3,4\}=\{1\}$ or $\{3\}$; $e=e_0$.
    It is easy to check that (i)(ii) (of \cref{lemma: properties of choosable pairs}) are satisfied for all $G\in\mc{G}'$.
    Hence, $G_0 \overset{\varphi}{\rightarrow} G$, meaning $\mc{G}$ is difference-isomorphic.

    Second, we show that $\mc{G}$ is not an involution clique.
    Suppose for contradiction that $\mc{G}$ is a $\psi'$-clique some involution $\psi'$.
    We have $e_{\psi}(N_{\psi'}(G_0))\ge e_\psi(\mc{G}') \ge 2^{2f_r(n)-O(n^{r-2})}\gg 2^{2f_r(n)}\cdot e^{-\binom{n}{r-1}/100}$, when $n$ is sufficiently large in terms of $r$.
    Then, \cref{lemma: general counts for exceptional r-graph} implies that $\psi=\psi'$.
    But since $e_0 \in \mc{C}_1(\psi)$, \cref{prop: involution relation} implies that all graphs in $\mc{G}'$ must also contain $e_0$, which is impossible.
    So, $\mc{G}$ is indeed not an involution clique.
\end{proof}

\end{document}